\newcommand{\eqref}[1]{(\ref{#1})}
\newcommand{\eee}{{\mathrm{e}}} 
\newcommand{\Probability}{\operatorname{Pr}}
\newcommand{\Expectation}{\mathrm{E}}
\newcommand{\CA}[2]{{\mathcal{A}^{#1}_{#2}}}
\newcommand{\CE}[2]{{\mathcal{E}^{#1}_{#2}}}
\def\E{\mathcal{E}}
\def\C{\mathcal{C}}
\def\R{\mathcal{R}}
\def\eps{\varepsilon}
\def\ls{c_{\mathrm{sob}}}
\def\gap{c_{\mathrm{gap}}}
\def\D{\mathcal{D}}
\def\var{\operatorname{Var}}
\def\ent{\operatorname{Ent}}
\def\twarm{{T_{\mathrm{w}}}}
\def\twarmp{{T'_{\mathrm{w}}}}
\def\cb{{c_2}}
\def\epsl{\varepsilon}
\def\epsu{\delta}
\newcommand{\Tmix}{{T_{\mathrm{mix}}}}
\newcommand{\Trel}{{T_{\mathrm{relax}}}}
\newcommand{\Tlev}{{T_2}}
\newtheorem{theorem}{Theorem}
\newtheorem{lemma}[theorem]{Lemma}
\newtheorem{proposition}[theorem]{Proposition}
\newtheorem{observation}[theorem]{Observation}
\begin{document}
\begin{frontmatter}

\title{Phase transition for the mixing time of the Glauber dynamics for
coloring regular~trees\thanksref{T1}}
\runtitle{$\!\!\!$The mixing of Glauber dynamics on coloring regular trees}

\thankstext{T1}{Supported in part by NSF Grants DMS-07-01043, CCF-0830298 and CCF-0910584.}

\begin{aug}
\author[a]{\fnms{Prasad} \snm{Tetali}\ead[label=e1]{tetali@math.gatech.edu}},
\author[b]{\fnms{Juan C.} \snm{Vera}\ead[label=e2]{j.c.veralizcano@uvt.nl}},
\author[c]{\fnms{Eric} \snm{Vigoda}\ead[label=e3]{vigoda@gatech.edu}}
\and
\author[c]{\fnms{Linji} \snm{Yang}\corref{}\ead[label=e4]{ljyang@gatech.edu}}
\runauthor{Tetali, Vera, Vigoda and Yang}
\affiliation{Georgia Institute of Technology, Tilburg University,
Georgia Institute of Technology and Georgia Institute of Technology}

\address[a]{P. Tetali\\
School of Mathematics and\\
\quad School of Computer Science\\
Georgia Institute of Technology\\
Atlanta, Georgia 30332\\
USA\\
\printead{e1}}

\address[b]{J. C. Vera\\
Department of Econometrics and\\
\quad Operations Research\\
Tilburg University\\
5000 LE Tilburg\\
The Netherlands\\
\printead{e2}}

\address[c]{E. Vigoda\\
L. Yang\\
School of Computer Science\\
Georgia Institute of Technology\\
Atlanta, Georgia 30332\\
USA\\
\printead{e3}\\
\phantom{E-mail: }\printead*{e4}}

\end{aug}

\received{\smonth{8} \syear{2010}}
\revised{\smonth{9} \syear{2011}}


\begin{abstract}
We prove that the mixing time of the Glauber dynamics for random
\mbox{$k$-colorings} of the complete tree with branching factor $b$ undergoes
a phase transition at $k=b(1+o_b(1))/\ln{b}$.
Our main result shows nearly sharp bounds on the
mixing time of the dynamics on
the complete tree with $n$ vertices
for $k=Cb/\ln{b}$ colors with constant $C$.
For $C\geq1$ we prove the mixing time is $O(n^{1+o_b(1)}\ln{n})$.
On the other side, for $C< 1$ the mixing time
experiences a slowing down; in particular, we prove
it is $O(n^{1/C + o_b(1)}\ln{n})$ and $\Omega(n^{1/C-o_b(1)})$.
The critical point $C=1$ is interesting since it coincides (at least up
to first
order) with the so-called reconstruction threshold which was
recently established by Sly.
The reconstruction threshold has been of considerable interest recently
since it appears to have close connections to the efficiency of certain local
algorithms, and this work was inspired by our attempt to understand
these connections
in this particular setting.
\end{abstract}

%
\begin{keyword}[class=AMS]
\kwd{60J10}.
\end{keyword}

\begin{keyword}
\kwd{Phase transition}
\kwd{mixing time}
\kwd{Glauber dynamics}
\kwd{Markov chain Monte Carlo}
\kwd{graph colorings}.
\end{keyword}

\end{frontmatter}

\section{Introduction}
\label{introduction}

There has been considerable interest in recent years in understanding the
mixing time of Markov chains arising from single-site updates (known
as Glauber dynamics) for sampling spin systems on finite graphs.
The Glauber dynamics is
well studied both for its computational purposes, most immediately its
use in Markov chain Monte Carlo (MCMC) algorithms,
and for its physical motivation as a model of how physical systems
reach equilibrium. Several works in this topic focus on exploring the
dynamical and spatial connections between the mixing time and
equilibrium properties
of the spin system. A notable example of such equilibrium properties is the
uniqueness of the infinite volume Gibbs measure, which very roughly
speaking corresponds to
the influence of a worst-case boundary condition. Recently a related
weaker notion known
as the \textit{reconstruction threshold} has been the focus of
considerable study.
Reconstruction considers the influence of a ``typical'' boundary
condition (we define
it more precisely momentarily).

Much of the recent interest in reconstruction stems
from its conjectured connections to the efficiency of local algorithms
on trees and
tree-like graphs, such as sparse random graphs.
The Glauber dynamics is one particular
example of such a local algorithm; another important example is the
class of
belief propagation algorithms.
The work of Achlioptas and Coja-Oghlan~\cite{ACO}
gives strong evidence
for the ``algorithmic barriers'' that arise in the reconstruction phase
for several constraint satisfaction problems,
including colorings, on sparse random graphs.
In this paper we show that the mixing time of the Glauber dynamics for random
colorings of the complete tree undergoes a
phase transition, and the critical point appears to coincide with the
reconstruction threshold, at least up to a first order term.

We study the heat-bath version of the Glauber dynamics on the complete
tree with branching factor $b$
for the case of (proper vertex) $k$-colorings.
Proper colorings correspond in the physics community
to the zero-temperature limit of the anti-ferromagnetic Potts model,
and the infinite complete tree is known as the Bethe lattice. Let $\C
=\{
1,2,\ldots,k\}$ denote the set of $k$ colors, and $T_\ell=(V,E)$ denote
the complete tree with branching factor $b$,
height $\ell$ and $n$ vertices.
We are looking at the set $\Omega$ of proper vertex $k$-colorings
which are
assignments $\sigma\dvtx V\rightarrow\C$ such that for all $(v,w)\in
E$ we
have $\sigma(v)\neq\sigma(w)$.
The Glauber dynamics for colorings is a Markov chain $(X_t)$ whose
state space
is $\Omega$ and transitions $X_t\rightarrow X_{t+1}$ are defined as follows:
\begin{itemize}
\item Choose a vertex $v$ uniformly at random.
\item For all $w\neq v$, set $X_{t+1}(w) = X_t(w)$.
\item Choose $X_{t+1}(v)$ uniformly at random from its set of
available colors $\C\setminus X_t(N(v))$ where
$N(v)$ denotes the neighbors of $v$.
\end{itemize}
For the complete tree, when $k\geq3$
the dynamics is ergodic where the unique stationary
distribution is the uniform distribution over $\Omega$.
The mixing time is the number of steps, from the worst initial state,
to reach
within variation distance $\leq1/2\eee$ of the stationary distribution.
We also consider the relaxation time which is the inverse of the
spectral gap
of the transition matrix. We formally define these notions in Section
\ref{preliminaries}.

For general graphs of maximum degree $b$, the Glauber dynamics is ergodic
when $k\geq b+2$ and the best result for arbitrary graphs proves
$O(n^2)$ mixing time when
$k>11b/6 $~\cite{Vigoda}. There are a variety of improvements for
classes of graphs with
high degree or girth (see~\cite{FriezeVigoda} for a survey) and
recently, Mossel and Sly~\cite{MosselSly}
proved polynomial mixing time for sparse random graphs $G(n,d/n)$, for
constant $d>1$,
for some constant number of colors.

There are two phase transitions of primary interest in the tree $T_\ell
$---uniqueness and reconstruction.
These phase transitions are realized by analyzing the influence of the
boundary condition, which in the case of tree corresponds
to fixing the coloring of the leaves. We say uniqueness holds if for
all boundary conditions,
if we consider the uniform distribution conditional on the boundary condition,
the influence at the root decays in the limit $\ell\rightarrow\infty$
(i.e., the root is uniformly distributed over the set $\C$ in the limit).
Jonasson~\cite{Jonasson} established that the uniqueness threshold is
at $k=b+2$.
When $k\leq b+1$ it is not hard
to see that there are boundary conditions which, in fact, ``freeze''
the root;
moreover, the Glauber dynamics is not even ergodic in the case when $k
= b+2$.
Martinelli et al.~\cite{MSW:soda} analyzed the Glauber dynamics on the
tree $T_\ell$ with
a fixed boundary condition. They proved a bound of $O(n\log{n})$ on the
mixing time when
$k\geq b+3$ for any boundary condition.

The reconstruction threshold corresponds to the influence of a random
boundary condition.
In particular, we first choose a random coloring of $T_\ell$, the
colors of the leaves are
fixed, and we rechoose a random coloring for the internal tree from
this conditional distribution.
Reconstruction is said to hold if the leaves have a nonvanishing (as
$\ell\rightarrow\infty$) influence on
the root in expectation.
We refer to
the reconstruction threshold as the critical point for the transition
between the reconstruction and nonreconstruction phases.
It was recently established by Sly that the reconstruction threshold
occurs at $k=b(1+o(1))/\ln{b}$
\cite{Sly,BVVW}.

A general connection between reconstruction and the convergence time of
the Glauber dynamics
was shown by Berger et al.~\cite{BKMP} who showed, for general spin systems,
that $O(n)$ relaxation time on the complete
tree (without boundary conditions) implies nonreconstruction.
A new work of Ding et al.~\cite{DLP} gives very sharp bounds on the
mixing time of the Glauber dynamics for the Ising model on the complete tree,
and illustrates how it undergoes a phase transition at the reconstruction
threshold. For the case of colorings,
recently Hayes et al.~\cite{HVV} proved polynomial mixing time of the
Glauber dynamics for any
planar graph with maximum degree $b$ when $k>100b/\ln{b}$.
Subsequently, improved results were established for the tree. In particular,
Goldberg et al.~\cite{GJK} proved the mixing time is $n^{\Omega
(b/(k\ln{b}))}$
for the complete tree with branching factor $b$, and Lucier et al.
\cite{Molloy}
proved the mixing time is $n^{O(1+b/(k\ln{b}))}$ for any tree with
maximum degree $b$ and the number of colors $k\ge4$. In a follow-up
paper, Lucier et al.~\cite{Molloy2} further prove the same upper bound
for the case when $k=3$.

Our goal is to understand the relationship between the reconstruction
threshold and
the mixing time. Thus we want to establish a more precise picture than
provided by the
results of~\cite{GJK} and~\cite{Molloy}. Our main result
provides (nearly) sharp bounds on the mixing time and relaxation time
of the Glauber dynamics for the complete tree, establishing a phase
transition at
the critical point $k=b(1+o_b(1))/\ln{b}$. Our proofs build upon the
approaches used by~\cite{GJK} and~\cite{Molloy}.\vadjust{\goodbreak}

\begin{theorem}
\label{main-theorem}
For all $C>0$, there exists $b_0$ such that,
for all $b>b_0$,
for $k=Cb/\ln{b}$, the Glauber dynamics on the
complete tree $T$ on $n$ vertices with branching factor $b$ and height
$H = \lfloor\log_b n \rfloor$ satisfies
the following:
\begin{longlist}
\item[(1)] \textit{For $C\geq1$,}
\label{thm:above}
\begin{eqnarray*}
\Omega\bigl(n\ln{n}/(b \operatorname{poly}(\log{b}))\bigr) & \le
&\Tmix
\le O\bigl(n^{1+o_b(1)}\ln{n}\bigr),
\\
\Omega(n) & \le& \Trel\le O\bigl(n^{1+o_b(1)}\bigr);
\end{eqnarray*}
\item[(2)] \textit{For $C< 1$,}
\label{thm:below}
\begin{eqnarray*}
\Omega\bigl(n^{1/C - o_b(1)}\bigr) & \le& \Tmix\le O\bigl(n^{1/C
+ o_b(1)}\ln{n}\bigr),
\\
\Omega\bigl(n^{1/C - o_b(1)}\bigr) & \le& \Trel\le O\bigl(n^{1/C
+ o_b(1)}\bigr),
\end{eqnarray*}
where the $o_b(1)$ functions are $O(\ln{\ln{b}}/\ln{b})$ for the upper
bounds, $b^{1-1/C}/C$ for the lower bounds when $1/2 < C <1$ and
exactly zero for the lower bounds when $0 < C \le1/2$.
The constants in the $\Omega(\cdot)$ and $O(\cdot)$ are universal constants.
\end{longlist}
\end{theorem}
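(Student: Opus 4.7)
The plan is to prove the four bounds (upper and lower, on both $\Trel$ and $\Tmix$, in each of the two regimes $C\ge 1$ and $C<1$) by exploiting the self-similarity of $T_\ell$: mixing on $T_\ell$ is reduced recursively to mixing on subtrees $T_{\ell-1}$ plus a ``block-update'' step at the root, and the latter is analyzed using reconstruction-type estimates in the spirit of \cite{Sly,GJK,Molloy}.

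For the upper bounds I would use a block-dynamics decomposition along the lines of \cite{MSW:soda,Molloy}. Cut at the root and take the $b$ principal subtrees as blocks. If $\Trel^{\rm blk}(T_\ell)$ denotes the relaxation time of the chain that picks one block and resamples it conditional on the current root color, then a product-chain comparison gives
\[
\Trel(T_\ell)\;\le\;\Trel^{\rm blk}(T_\ell)\cdot\max_v \Trel(T_{\ell-1}^{(v)}),
\]
which iterates to $\Trel(T_\ell)\le \prod_{h\le\ell}\Trel^{\rm blk}(T_h)$. The key quantitative input is a bound on $\Trel^{\rm blk}(T_h)$, which is governed by how close the conditional distribution of the root color is to uniform given a typical configuration of the subtrees --- i.e.\ precisely the quantity measured by reconstruction. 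For $C\ge 1$, Sly's non-reconstruction result yields $\Trel^{\rm blk}(T_h)=1+O(\ln\ln b/\ln b)$ per level and hence $\Trel\le n^{1+o_b(1)}$. For $C<1$, reconstruction holds and a single level contributes a multiplicative factor $b^{(1-C)/C+o_b(1)}$, producing $n^{1/C+o_b(1)}$ after $\log_b n$ levels. Passing from $\Trel$ to $\Tmix$ costs a $\ln^2 n$ factor via log-Sobolev estimates on a single block, and a single-site-to-block comparison absorbs the remaining polynomial slack.

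For the lower bounds I would use a conductance bottleneck argument as in \cite{GJK}. Pick an intermediate height $h^\star$ whose subtrees are inside the reconstruction regime (for $C<1$) and attach a $\C\cup\{\bot\}$-valued ``reconstruction vote'' $\chi_i$ to each height-$h^\star$ subtree $T^{(i)}$ via the usual census recursion; by construction $\chi_i$ guesses the root color of $T^{(i)}$ with bias $\Omega(1)$ over uniform. A cut $S\subset\Omega$ is then obtained by a majority-type aggregation of the $\chi_i$ across the tree. Its stationary measure is bounded away from $0$ and $1$, while a careful count of pivotal single-site flips bounds the boundary ratio $\pi(\partial S)/\pi(S)$ by $n^{-1/C+o_b(1)}$; Cheeger and its mixing-time counterpart then give the stated lower bounds. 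For $C\ge 1$, the $\Omega(n\ln n/(b\,\mathrm{poly}(\log b)))$ mixing lower bound comes from a coupon-collector-style argument on the leaves together with a variance computation yielding $\Trel\ge\Omega(n)$.

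The main obstacle is matching the $1/C$ exponent on the nose. The bounds $n^{O(1+1/C)}$ from \cite{GJK,Molloy} carry an extra $+1$ that arises from losing a polynomial-in-$b$ factor at each of the $\log_b n$ recursive steps; removing it requires a block-dynamics comparison that is essentially loss-free per level, combined with a quantitative version of Sly's threshold giving explicit $O(\ln\ln b/\ln b)$ rates in both the reconstruction and non-reconstruction directions. Coordinating these ingredients --- so that the reconstruction error estimates propagate through the recursion without accumulating an extra factor per level --- is where I expect the technical heart of the argument to lie.
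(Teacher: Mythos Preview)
Your high-level architecture --- a block-dynamics recursion for the upper bounds and a conductance bottleneck for the lower bounds --- matches the paper's. But the way you propose to execute each piece differs substantially from what the paper does, and on the upper-bound side your plan has a real gap.

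\textbf{Upper bounds.} The paper never invokes Sly's reconstruction threshold in the proof. Instead it uses the Lucier--Molloy decomposition $\Trel(T_H)\le(\max\{b,\tau^*\})^H$, reducing everything to the relaxation time $\tau^*$ of Glauber on the \emph{star} $G^*=T_1$, and then bounds $\tau^*$ by explicit coupling arguments: for $C>1$ a weighted one-step coupling together with the Hayes--Vigoda stationary-coupling theorem gives $\tau^*=O(b\ln b)$; for $C\le 1$ a multi-stage coupling on the star gives $\tau^*=O(b^{1/C}\ln^2 b)$. The passage from $\Trel$ to $\Tmix$ is via the log-Sobolev comparison $\ls^{-1}\le 2\Trel\ln n$, costing only $O(\ln^2 n)$. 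Your alternative --- control $\Trel^{\rm blk}(T_h)$ per level via reconstruction/non-reconstruction --- is not what is done here, and as written it is internally inconsistent: you claim $\Trel^{\rm blk}=1+O(\ln\ln b/\ln b)$ for $C\ge1$, but the product over $H=\log_b n$ levels would then be $O(1)$, not $n^{1+o_b(1)}$; any block chain selecting among $b$ subtrees already has relaxation time $\Omega(b)$. More fundamentally, non-reconstruction controls the influence of a \emph{typical} leaf boundary on the root, whereas $\tau^*$ (equivalently your $\Trel^{\rm blk}$) is a worst-case-over-states mixing statement on the star; there is no black-box implication from Sly's qualitative result to a per-level factor sharp enough to hit the exponent $1/C$. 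The paper sidesteps this entirely by working on $G^*$ directly.

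\textbf{Lower bounds for $C<1$.} The paper's conductance set is simpler than your reconstruction-vote-plus-majority construction: it takes $S=\{\sigma:\text{root is \emph{frozen} to some color in }\{1,\dots,\lfloor k/2\rfloor\}\}$, where ``frozen'' means the leaf configuration forces the root color deterministically. The boundary of $S$ consists precisely of pairs $(\sigma,z)$ for which recoloring the single leaf $z$ can unfreeze the root, and an inductive bound on $\Pr[\mathcal{E}(\sigma,z)]$ down the root-to-$z$ path gives conductance $O(n^{-1/C+o_b(1)})$. No intermediate height $h^\star$ and no census/majority aggregation is needed; ``frozen'' already encodes the strongest form of reconstruction and makes the pivotal-leaf count tractable directly. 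The lower bounds for $C\ge1$ are, as you say, quoted from Hayes--Sinclair and the trivial single-site bound.
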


\begin{remark*}
When $C \ge1$, the lower bound of the mixing time is proved by Hayes
and Sinclair~\cite{HS} in a more general setting, and for the
particular case of the heat-bath version of the Glauber
dynamics on the complete tree, we believe it can be
improved to $\Omega(n\ln{n}/\operatorname{poly}(\log{b}))$ by the
same proof.
The lower bound of the relaxation time simply follows from the fact
that the probability of selecting a specific vertex
to recolor in one step of the dynamics is $1/n$.
Note, the results of Berger et al.~\cite{BKMP} imply a lower bound of
$\Trel\geq\omega(n)$
for the case $C<1$ since reconstruction holds in this region.
\end{remark*}

Our result extends to more general $k$ and $b$, thereby refining the general
picture provided by~\cite{GJK} and~\cite{Molloy}.
\begin{theorem}
\label{main-theorem2}
There exists $b_0$ such that, for all $k, b$ satisfying $b/(k\ln b) >
2$ and $b>b_0$,
the Glauber dynamics on the complete tree of $n$ vertices with
branching factor $b$
satisfies the following:
\begin{eqnarray*}
\Omega\bigl(n^{b/(k\ln b)}\bigr) & \le& \Tmix\le O\bigl
(n^{b/(k\ln b) + \gamma
}\ln{n}\bigr),
\\
\Omega\bigl(n^{b/(k\ln b)}\bigr) & \le& \Trel\le O\bigl
(n^{b/(k\ln b) + \gamma}\bigr),
\end{eqnarray*}
where
\[
\gamma= \gamma(b) = 1 - \frac{\ln{k}}{\ln{b}} + \frac{\ln\ln
{b}}{\ln
{b}} + \frac{O(1)}{\ln b}
\]
is at most a small constant.
\end{theorem}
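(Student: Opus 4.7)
The plan is to retrace the proof of Theorem \ref{main-theorem} part \ref{thm:below} (the $C<1$ regime) while carrying the exponent $\alpha := b/(k \ln b)$ as a parameter rather than specializing to $\alpha = 1/C$. Theorem \ref{main-theorem} already handles $k = Cb/\ln b$ with $C<1$, which corresponds to $\alpha = 1/C \in (1,\infty)$, so the task is essentially to check that the estimates there depend on $k$ and $b$ only through the combination $\alpha$, up to the correction $\gamma$. The hypothesis $b/(k\ln b)>2$ places us strictly inside the slow-mixing phase, which is the source of both directions of the bound.

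For the \emph{upper bound}, I would use a Martinelli-style block decomposition of $T$. Partition the tree into slabs of height $\ell$ on the order of $(\ln\ln b/\ln b)\cdot \log_b n$, and bound the relaxation time by the product of (i) the block-dynamics relaxation time, which is polynomial in $n$ with exponent near $\alpha$ after accounting for the slab geometry, and (ii) the worst-case mixing time of a single slab with fixed boundary, controlled by the Lucier et al.\ \cite{Molloy} estimate of $n^{O(1+\alpha)}$ on a height-$\ell$ subtree. Choosing $\ell$ to balance these two contributions and tracking the constants carefully yields $\Trel \le O(n^{\alpha+\gamma})$, and the passage to $\Tmix$ costs an additional $\ln^2 n$ factor via the standard variance-to-total-variation conversion.

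For the \emph{lower bound}, I would construct a conductance bottleneck. Fix a subtree $T'$ of the whole tree of height $h = \Theta(\log_b n)$. Because $\alpha > 1$, reconstruction holds on $T'$ by Sly \cite{Sly}, so conditional on a typical leaf boundary, the distribution of the root of $T'$ is biased toward a specific color. Let $S \subset \Omega$ be the set of global colorings in which the root of $T'$ takes that distinguished color; then $S$ has stationary probability bounded away from $0$ and $1$, yet any Glauber transition from $S$ to $S^c$ ultimately requires propagating a disagreement from the leaves of $T'$ through all $h$ levels to its root. A disagreement-path count shows the conductance of $S$ is $O(n^{-\alpha})$, whence $\Trel \ge \Omega(n^\alpha)$ by Cheeger; the matching lower bound on $\Tmix$ follows immediately.

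The \emph{main obstacle} is keeping the lower-order terms honest. The three pieces of $\gamma$ have distinct origins: $1 - \ln k/\ln b$ reflects that in the disagreement cascade the effective out-degree is about $b/k$, since a vertex recolors toward a target color only when that color becomes available among its neighbors' forbidden list; $\ln\ln b/\ln b$ comes from rounding the optimal slab height to an integer number of tree levels; and $O(1)/\ln b$ absorbs the universal constants inside each slab's mixing estimate. The delicate step is verifying that none of these losses compounds badly through the $O(\log_b n)$ levels of recursion, which is where the slack provided by the hypothesis $b/(k\ln b) > 2$ is essential. No new ideas beyond those behind Theorem \ref{main-theorem} are needed; indeed that proof already produces Theorem \ref{main-theorem2} once the bookkeeping is done with $k$ and $b$ kept separate rather than collapsed into $C$.
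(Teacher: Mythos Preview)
Your closing sentence is exactly right: Theorem \ref{main-theorem2} falls out of the proof of Theorem \ref{main-theorem} by carrying $\alpha = b/(k\ln b)$ as a parameter, and the paper's Section \ref{generalization} does precisely this bookkeeping. But the specific mechanisms you sketch for the two bounds are \emph{not} those behind Theorem \ref{main-theorem}, and they would not deliver the stated exponents.

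For the upper bound, there is no multi-level slab decomposition. The paper uses the Lucier--Molloy block decomposition (Theorem \ref{thm:decomposition}) with height-one blocks, i.e.\ stars: $\Trel \le (\tau^*)^H$ where $\tau^*$ is the relaxation time on the star $G^*$. All the work is a direct coupling on the star (Lemma \ref{lem:upper-below}, generalized to Lemma \ref{lem:upper-below2}) showing $\tau^* = O(\alpha\, b^{\alpha} \ln b)$; raising this to the $H$th power gives the exponent $\alpha + (\ln\alpha + 2\ln\ln b + O(1))/\ln b = \alpha + \gamma$. Your scheme, with slab height of order $(\ln\ln b/\ln b)\log_b n$ and the \cite{Molloy} bound on each slab, at best reproduces $n^{O(1+\alpha)}$, because the hidden constant in their $O(1+\alpha)$ does not shrink with the slab height; the improvement to $\alpha + \gamma$ comes precisely from the sharp star analysis you omit. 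Your attribution of the pieces of $\gamma$ is accordingly off: $1 - \ln k/\ln b$ together with $\ln\ln b/\ln b$ is just $(\ln\alpha + 2\ln\ln b)/\ln b$, arising from the $\alpha$ and $\ln b$ factors in $\tau^*$, not from an ``effective out-degree $b/k$'' or slab-height rounding.

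For the lower bound, the paper does not invoke Sly's reconstruction theorem. The bottleneck set $S$ consists of colorings in which the root is \emph{frozen} (the leaf configuration uniquely determines the root's color) to some color in $\{1,\dots,\lfloor k/2\rfloor\}$; escaping $S$ in one Glauber move requires recoloring a single leaf so as to unfreeze the root. The conductance is controlled by bounding, for a random $\sigma$ and fixed leaf $z$, the probability of the event $\E(\sigma,z)$ that the root is frozen but some recoloring of $z$ unfreezes it; an induction down the root-to-$z$ path (Lemma \ref{lem:Abound}, generalized to Lemma \ref{lem:Abound2}) gives $\Pr[\E(\sigma,z)] \le b^{-\alpha H}$ and hence $\Phi_S = O(n^{-\alpha})$. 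Your set $S$ --- colorings where the root of $T'$ takes a ``distinguished'' color determined by its leaf boundary --- is not a fixed subset of $\Omega$, since the distinguished color varies with the boundary; and reconstruction (a non-vanishing bias) is far too weak to force $\Phi_S = O(n^{-\alpha})$, which requires the near-deterministic behavior that freezing provides.
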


\begin{remark*}
The constants in the $\Omega(\cdot)$ and $O(\cdot)$ of Theorem \ref
{main-theorem2} are universal
constants.
Also, note that when $k = b^\alpha$ for constant $\alpha< 1$, then
$\lim_{b\rightarrow\infty} \gamma= 1 - \alpha$,
and when $k$ is constant, then $\lim_{b\rightarrow\infty} \gamma= 1$.
\end{remark*}

\section{Proof overview}
\label{proofideas}
We now give an outline of the proofs of Theorem~\ref{main-theorem}.
Readers can refer to Section~\ref{preliminaries} for the definitions
and background materials.

\subsection{Upper bounds}

We first sketch the proof approach for upper bounding the mixing time
and relaxation time. Let $G^* = (V,E)$ be the star graph on $b+1$
vertices, that is, the complete tree $T_1$ of height $1$ with $b$
leaves, and $H$ be the height of the complete tree $T_H$, that is, $H =
\lfloor\log_b n \rfloor$. Let $\tau^*$ be the relaxation time of the
Glauber dynamics on the star graph $G^*$
using $k$ colors.

We use the following decomposition result of Lucier and Molloy \cite
{Molloy}, which is an application of the block dynamics technique (see
Proposition 3.4 in~\cite{Martinelli-lecturenotes}) to the Glauber
dynamics on the complete trees combined with an earlier result proved
by Berger et al. which shows that the relaxation time of this special
block dynamics is the same as that of the Glauber dynamics on the star
graph (see Claim 2.9 in~\cite{BKMP}).
\begin{theorem}
\label{thm:decomposition}
The relaxation time $\Trel$ of the Glauber dynamics on the complete
tree of height $H$ with branching factor $b$ satisfies
\[
\Trel\le(\tau^*)^H.
\]
\end{theorem}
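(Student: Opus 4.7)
The plan is to prove this by induction on the height $H$, using Martinelli's block-dynamics decomposition together with a projection argument due to Mossel--Sly. The base case $H=1$ is immediate: $T_1$ is exactly the star $G^*$, so $\Trel(T_1) = \tau^* \le \max\{b,\tau^*\}$.

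For the inductive step on $T_H$, let $r$ be the root, $v_1, \ldots, v_b$ its children, and $T^{(i)}$ the subtree of height $H-1$ rooted at $v_i$. I would set up the block dynamics whose blocks $B_1, \ldots, B_b$ correspond to these $b$ subtrees (with the root $r$ adjoined to each block, in the spirit of Lucier--Molloy). Martinelli's Proposition 3.4 then yields
\begin{equation*}
  \Trel(T_H) \;\le\; C \cdot \Trel^{\mathrm{block}} \cdot \max_{i}\, \Trel(B_i),
\end{equation*}
where $C$ is a constant controlled by the maximum block overlap (essentially $b$ here, since only $r$ is shared).

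The two factors are bounded separately. For the intra-block factor, after conditioning on its complement each block $B_i$ is essentially a copy of $T_{H-1}$ with the single boundary vertex $r$ held fixed; the induction hypothesis gives $\max_i \Trel(B_i) \le (\max\{b,\tau^*\})^{H-1}$. For the block-dynamics factor, a single block update resamples the entire subtree $T^{(i)}$ conditional on its complement. Because a full resample realises the exact conditional marginal of $v_i$ given $r$ and leaves the interior of $T^{(i)}$ conditionally independent of the outside given $v_i$, the block dynamics, projected onto the colors of the star $\{r, v_1, \ldots, v_b\}$, is a valid lift of the heat-bath Glauber dynamics on $G^*$. Mossel--Sly's Lemma 2 formalises this projection and yields $\Trel^{\mathrm{block}} \le \tau^*$. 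Multiplying the two bounds and absorbing the overlap constant $C$ into a single base factor of $\max\{b,\tau^*\}$ closes the recursion at $(\max\{b,\tau^*\})^H$.

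The main obstacle is the projection-to-the-star step: one has to verify rigorously that the subtree-block dynamics on $T_H$ is a lift, in the Markov-chain comparison sense, of the Glauber dynamics on $G^*$, so that its spectral gap is bounded by $1/\tau^*$ uniformly in $H$ -- this is exactly what Mossel--Sly's Lemma 2 provides. A subsidiary accounting issue is checking that the overlap constant $C$ in Proposition 3.4 is at most a constant multiple of $b$ (which holds because only $r$ is shared across blocks), so that it can be absorbed into one base factor of $\max\{b,\tau^*\}$ without inflating the exponent.
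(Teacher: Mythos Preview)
Your strategy---induction on $H$, Martinelli's block-dynamics comparison for the intra-block factor, and Mossel--Sly's projection lemma to reduce the block chain to the star Glauber dynamics---is exactly the argument the paper attributes to Lucier--Molloy, so at the level of ideas you are on target.

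There is, however, a concrete bookkeeping error in your choice of blocks. If $B_i=\{r\}\cup T^{(i)}$, then conditioning on the complement of $B_i$ does \emph{not} fix $r$: the root is part of the block and remains free, with $b-1$ frozen neighbours $v_j$, $j\neq i$. Hence $B_i$ is not a copy of $T_{H-1}$ with a single boundary vertex, and the induction hypothesis does not apply to it as stated. Moreover, the overlap constant $C\approx b$ that this block system forces, multiplied by the factor $\tau^*$ from the block chain, overshoots the target recursion by one power of $\max\{b,\tau^*\}$. The clean (and standard) fix is to take the $b+1$ \emph{disjoint} blocks $B_0=\{r\}$ and $B_i=T^{(i)}$ for $1\le i\le b$: then each nontrivial block is literally $T_{H-1}$ with the single boundary vertex $r$ fixed, there is no overlap constant, and the $(b{+}1)$-block dynamics projects exactly onto the $(b{+}1)$-vertex heat-bath star chain, so Mossel--Sly gives $\Trel^{\mathrm{block}}=\tau^*$ and the recursion closes at $(\max\{b,\tau^*\})^H$.
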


Therefore, proving the upper bounds in Theorem~\ref{main-theorem}
reduces to the problem of getting tight upper bounds of the relaxation
time $\tau^*$ of the Glauber dynamics on $G^*$. In~\cite{Molloy}, the
authors used a canonical path argument to bound $\tau^* =
O(b^{2+1/C}k)$ for any $C > 0$.
Instead, here we use two different coupling arguments to show the
following two theorems for $\tau^*$.

\begin{theorem}
\label{thm:star-upper-below}
For any $C < 1$, there exists $b_0 > 0$ such that, for any $b > b_0$,
the mixing and relaxation times of the Glauber dynamics on $G^*$ using
$k = Cb/\ln b$ colors are $O(b^{1/C}\ln^2 b)$. When $C = 1$, the mixing
and relaxation times are $O(b\ln^4 b)$.
\end{theorem}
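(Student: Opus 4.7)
The plan is to couple two copies $X_t,Y_t$ of the Glauber dynamics on the star $G^*$ and bound the expected coupling time, which by standard arguments gives the stated upper bounds on $\Tmix$ and $\Trel$. The state of $G^*$ is the center color $c$ together with the $b$ leaf colors, and in stationarity, conditional on $c$, the leaves are i.i.d.\ uniform on $\C\setminus\{c\}$. The critical quantity is the set $A$ of colors that appear on no leaf: this is exactly the support of a center update, and since $c\in A$ always, the center actually moves only when $|A|\geq 2$.

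I would use a \emph{permutation coupling}: both chains pick the same vertex, and we maintain a bijection $\pi:\C\to\C$ (initially the transposition $(c_X\ c_Y)$) such that every leaf in $Y$ equals $\pi$ of the corresponding leaf in $X$. Each leaf update samples $a$ uniform on $\C\setminus\{c_X\}$ for $X$ and sets $Y$'s value to $\pi(a)$; this is a valid coupling since $\pi(c_X)=c_Y$. Stage~1: run this coupling for $O(b\ln b)$ steps so that every leaf is refreshed $\Omega(\ln b)$ times and the $\pi$-correspondence of leaves is achieved. Stage~2: at each subsequent center pick, $Y$'s new center is set to $\pi(c_X')$ where $c_X'$ is uniform on $A_X$, so the centers coalesce iff $c_X'$ is a fixed point of $\pi$, i.e.\ $c_X'\notin\{c_X,c_Y\}$. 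An occupancy calculation gives
\[
\Exp{|A_X|-1}\;\approx\;(k-1)\bigl(1-\tfrac{1}{k-1}\bigr)^{b}\;=\;\Theta\!\left(\tfrac{b^{\,1-1/C}}{\ln b}\right),
\]
so the per-step probability of a \emph{productive} center pick ($|A_X|\geq 2$) is $p=\Theta(b^{-1/C}/\ln b)$. By the color symmetry of the leaf distribution, the extra color in $A_X$ equals $c_Y$ only with probability $1/(k-1)$, so with probability $1-o(1)$ a productive pick coalesces the centers with probability $\geq 1/2$. Stage~3: once $c_X=c_Y$, reset $\pi$ to the identity and run identity coupling for $O(b\ln b)$ more steps to coalesce every leaf.

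The main obstacle is Stage~2: one has to verify both the occupancy tail for $|A_X|$ and the symmetry argument that rules out $c_Y\in A_X$ in the productive regime, while also controlling how residual leaf mismatches during Stage~1 affect $A_X$. Given these, a Chernoff bound over $\Theta(\ln b)$ productive picks inside $O(p^{-1}\ln b)=O(b^{1/C}\ln^2 b)$ steps makes at least one succeed with probability $1-b^{-\Omega(1)}$, bounding the coupling time. For the boundary case $C=1$ one has $\Exp{|A|-1}=\Theta(1/\ln b)$, so the productive-pick rate $p$ is only $\Theta(1/(b\ln b))$ and the per-pick coalescence probability also degrades by logarithmic factors, so the overall analysis loses an additional $\ln^2 b$ factor and yields $O(b\ln^4 b)$.
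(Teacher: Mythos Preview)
Your approach is correct and essentially the same as the paper's: a three-stage coupling consisting of a warm-up to make all leaf disagreements of the form $(c_Y,c_X)$ (your $\pi$-correspondence with $\pi=(c_X\ c_Y)$; note that your permutation coupling on leaves coincides exactly with the paper's maximal one-step coupling), then a wait for a common available color at the center, then a final leaf-coalescence stage. For $C<1$ the paper makes Stage~1 work by proving the root stays \emph{frozen} w.h.p.\ throughout the warm-up (so $\pi$ never needs to change); for $C=1$ the extra $\ln^2 b$ actually arises in Stage~1 rather than Stage~2---the paper first reduces to $O(\ln b)$ disagreeing leaves and then conditions on not selecting the root for $2b\ln\ln b$ steps, which costs a factor $1/\ln^2 b$---not from a degradation of the per-pick coalescence probability as you suggest.
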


\begin{theorem}
\label{thm:star-upper-above}
For any $C > 1$, there exists $b_0 > 0$ such that, for any $b > b_0$,
the mixing and relaxation times of the Glauber dynamics on $G^*$ using
$k \ge Cb/\ln b$ colors are $O(b\ln b)$.
\end{theorem}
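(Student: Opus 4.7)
The plan is to bound the coupling time of two copies $(X_t, Y_t)$ of the Glauber dynamics on $G^*$ started from arbitrary configurations, and to conclude $\Tmix = O(\Exp{\tau})$ via the standard variation-distance/Markov bound, where $\tau$ is the first $t$ with $X_t = Y_t$; the inequality $\Trel = O(\Tmix)$ for reversible chains then yields both estimates. It suffices to treat the worst case $k = \lceil Cb/\ln b\rceil$. At each step pick the same vertex $v_t$ uniformly in both chains. If $v_t$ is a leaf, use the optimal coupling of the uniform distributions on $\C \setminus \{c_X\}$ and $\C \setminus \{c_Y\}$, where $c_X, c_Y$ denote the centers of $X_t, Y_t$; these agree entirely when $c_X = c_Y$, and share $k-2$ common colors otherwise, so the leaf couples with probability $(k-2)/(k-1)$. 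If $v_t$ is the center, use the optimal coupling of the uniform distributions on $A_X := \C \setminus X_t(\text{leaves})$ and $A_Y := \C \setminus Y_t(\text{leaves})$, whose success probability is $|A_X \cap A_Y|/\max(|A_X|,|A_Y|)$. Let $D_t \in \{0,1\}$ indicate whether the centers disagree and let $N_t$ count the disagreeing leaves.

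The heart of the argument is Phase 1 (coupling the centers). A direct calculation from the coupling gives
\[
\ExpCond{N_{t+1} - N_t}{D_t = 1, N_t} = \frac{b - N_t(k-1)}{(b+1)(k-1)},
\]
which is a contractive drift toward $N^\ast := b/(k-1) = \Theta(\ln b/C)$, so standard drift/concentration arguments show $N_t = O(\ln b)$ throughout a window of length $T_1 = \Theta(b \ln b)$ with probability $1-o(1)$. Since the two chains differ on at most $N_t$ leaves, $|A_{X_t} \cap A_{Y_t}| \ge \max(|A_{X_t}|,|A_{Y_t}|) - N_t$, and a balls-and-bins concentration (after coupon-collector time $O(b\ln b)$ every leaf has been refreshed) gives $|A_{X_t}|, |A_{Y_t}| = \Theta(k b^{-1/C}) = \Omega(b^{1-1/C}/\ln b)$ with probability $1-o(1)$. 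For any fixed $C > 1$ this is $\omega(\ln b)$, so each center-update step in the window succeeds with probability $1-o(1)$. The center is selected $\Omega(\ln b)$ times in a window of length $T_1$, and hence $D_t$ becomes $0$ within $T_1$ steps with probability $1-o(1)$.

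Phase 2 (coupling the leaves) is now easy: once $D_t = 0$ the centers stay equal under the coupling, and each leaf update at a disagreeing leaf strictly reduces $N_t$, so a coupon-collector argument gives $N_t = 0$ within $O(b \ln b)$ further steps. Putting the phases together and applying Markov yields $\Exp{\tau} = O(b \ln b)$. The main technical obstacle is the balls-and-bins concentration in Phase 1, since under the coupled dynamics the leaf colors in a single chain are correlated through the shared history of center updates. The plan is to handle this by a censoring argument: divide the window into sub-windows of length $\Theta(b)$ in which the center is selected only $\Theta(1)$ times in expectation, freeze the center within each sub-window (paying a union-bound cost of $o(1)$ over the $O(\ln b)$ sub-windows), and apply standard concentration (e.g.\ McDiarmid) to the resulting iid product-chain evolution of the leaves.
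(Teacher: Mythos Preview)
Your two-phase direct coupling differs genuinely from the paper's argument. The paper avoids tracking the dynamics over time: it fixes a weighted Hamming distance (weight $b^{\eps/2}$ on the root, $1$ on leaves, with $\eps=C-1$), shows that for a \emph{stationary} coloring $X$ the root has $\ge b^{0.9\eps}$ available colors with probability $1-\exp(-b^{0.9\eps}/100)$, and then invokes the Hayes--Vigoda theorem to conclude that one-step distance contraction on this high-probability set suffices. This is cleaner precisely because it sidesteps the dynamic balls-and-bins concentration you flag as a difficulty; your route in principle works too but buys nothing over the paper's and requires more bookkeeping.

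There is, however, a real gap in your Phase~2. The assertion ``once $D_t=0$ the centers stay equal under the coupling'' is false: if $D_t=0$ but $N_t>0$ and the center is selected, the available sets $A_X,A_Y$ differ on up to $N_t$ colors, so the optimal coupling recreates a center disagreement with positive probability. You need an additional argument showing that with probability bounded away from zero the center is \emph{not} uncoupled during the coupon-collector window---this holds because each of the $O(\ln b)$ center updates in the window fails with probability $O(N_t/|A_X|)=O(\ln^2 b/b^{1-1/C})=o(1)$, but you have not made this step. The paper's own proof of the analogous fact in the $C\le 1$ regime (Lemma~\ref{lem:leafcoupling}) shows exactly the kind of argument needed. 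Your censoring sketch for Phase~1 is also under-specified; the bijection trick used in the paper's Lemma~\ref{lem:warmup} (coupling the leaf process to $b$ isolated vertices on $k-1$ colors) would give you the concentration directly without the sub-window device.
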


\begin{remark*}
It can be shown that the relaxation time is actually $O(b)$ when $C >
1$, from our analysis. However,
unless we can also eliminate the constant factors and thereby show a
very sharp bound of at most $b$,
the extra $\ln{b}$ factor makes little difference to the relaxation
time of the dynamics on the whole tree.
\end{remark*}

The most difficult (and also interesting) case turns out to be when
$C\le1$.
We will prove Theorem~\ref{thm:star-upper-below} in Section \ref
{upbelow} and Theorem~\ref{thm:star-upper-above} in Section~\ref{upabove}.
We sketch the high-level idea of the proof of Theorem \ref
{thm:star-upper-below}
in Section~\ref{upper:couplingproofidea}.
Having Theorems~\ref{thm:star-upper-below} and~\ref
{thm:star-upper-above} in hand, we can then apply Theorem \ref
{thm:decomposition} to get\vadjust{\goodbreak} the upper bounds on the relaxation time as
stated in Theorem~\ref{main-theorem}. We get
\begin{displaymath}
\Trel=
\cases{
O(b\ln b)^H = O\bigl(n^{1+(\ln\ln b + O(1))/{\ln b}}\bigr), & \quad$\mbox{if
}C > 1 ,$ \vspace*{2pt}\cr
O(b\ln^4 b)^H = O\bigl(n^{1 + (4\ln\ln b + O(1))/{\ln b}}\bigr), &\quad
$\mbox{if } C=1,$ \vspace*{2pt}\cr
O(b^{1/C}\ln^2 b)^H = O\bigl(n^{1/C + (2\ln\ln b + O(1))/{\ln
b}}
\bigr), &\quad$\mbox{if } C<1 .$}
\end{displaymath}

To then get the desired upper bounds on the mixing time of the whole
tree, we need a slightly more advanced tool, the logarithmic Sobolev
constant of the Markov chain; we define the log-Sobolev constant
formally in the next section along with the other technical preliminaries.
By adapting Theorem~5.7 in Martinelli, Sinclair and Weitz~\cite{MSW} to
our setting of colorings,
we establish and improve (in Section~\ref{logsobolev}) the following
relationship between the inverse of the log-Sobolev constant $\ls^{-1}$
and the relaxation time $\Trel$ of the Glauber dynamics on trees.
\begin{theorem}
\label{thm:LS}
\[
\ls^{-1} \le\Trel\cdot2b\ln(k).
\]
\end{theorem}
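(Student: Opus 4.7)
The second inequality is immediate: throughout the regimes of interest we have $k\le b$, whence $\log_b(n)\ln(k)=\ln(n)\ln(k)/\ln(b)\le\ln(n)$, so the bound is only weakened when the multiplier of $\Trel$ is increased.

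For the main estimate $\ls^{-1}\le 2\Trel\log_b(n)\ln(k)$, the plan is to transfer the recursive entropy-decomposition proof of Theorem~5.7 in~\cite{MSW} from the Ising model to colorings. The structural feature of the complete tree that we exploit is that, conditional on the color at any internal vertex $v$, the colorings of the subtrees rooted at the children of $v$ are mutually independent. This enables a hierarchical tensorization of entropy along the $H=\log_b(n)$ levels of $T_H$.

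Concretely, I would recurse on the height. At the root, apply the chain rule for entropy,
\[
\ent_\pi(f)\;=\;\ExpSub{\pi}{\ent_{\pi(\cdot\mid\sigma_r)}(f)}\;+\;\ent_\pi\!\bigl(\ExpCondSub{\pi}{f}{\sigma_r}\bigr),
\]
where $\sigma_r$ denotes the color at the root. By conditional independence, the first (inner-entropy) term is a sum of entropies over the $b$ subtrees of height $H-1$, to which the same decomposition applies recursively; this is where the factor $H=\log_b(n)$ arises after unfolding. The second (outer-entropy) term depends only on the $k$-valued root color, and the standard log-Sobolev estimate on a $k$-point space bounds it by $2\ln(k)$ times a local Poincar\'e-type contribution (variance or Dirichlet term). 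Accumulating these per-level $2\ln(k)$ factors across $H$ levels yields an inequality of the form $\ent_\pi(f)\le 2H\ln(k)\cdot L(f)$, where $L(f)$ is a sum of single-site conditional variances. Finally, a single application of the Poincar\'e inequality $\var_\pi(\cdot)\le\Trel\cdot\mathcal{E}(\cdot,\cdot)$ at the outermost step bounds $L(f)$ by a constant multiple of $\Trel\cdot\mathcal{E}(f,f)$, delivering $\ls^{-1}\le 2\Trel H\ln(k)$.

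The main obstacle, and the reason the argument requires the full force of the MSW recursive machinery rather than a plain Diaconis--Saloff-Coste bound, is keeping the recursion free of $\Trel$ at every intermediate level: one must accumulate only the mild $\ln(k)$ factor per level and defer the variance-to-Dirichlet conversion to one invocation of $\Trel$ at the end. This is precisely what the tree's conditional-independence structure permits. The adaptation from the Ising setting of \cite{MSW} to colorings is local: their $\{\pm 1\}$-valued single-site log-Sobolev input is replaced by the analogous $k$-point log-Sobolev estimate $\ent\le 2\ln(k)\cdot(\text{local Poincar\'e term})$ for distributions supported on at most $k$ atoms, which holds uniformly over all boundary conditions that can arise from partial colorings.
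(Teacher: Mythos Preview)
Your high-level plan---entropy chain rule at the root, tensorization over the subtrees via conditional independence, and a $k$-point log-Sobolev bound for the outer term---matches the paper's approach. However, you misidentify where the factor $\Trel$ enters, and this conceals a nontrivial lemma that the argument actually needs.

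In the paper's proof (adapting \cite{MSW}), the outer term $\ent\bigl(E_I(f)\bigr)$ at each level is bounded, via log-Sobolev on $K_{k-1}$ and concavity of $\sqrt{\cdot}$, by $\alpha\,\var(\sqrt{f})$ where $\alpha\approx\ln k$ and the variance is the \emph{full} variance on the tree of that height---not a single-site conditional variance. Converting this full variance to the Dirichlet form requires the Poincar\'e inequality, which costs $\gap(\ell)^{-1}$ at level $\ell$. The recursion is therefore
\[
\ls(\ell)^{-1}\ \le\ \ls(\ell-1)^{-1}\ +\ \alpha\,\gap(\ell)^{-1},
\]
additive rather than multiplicative in the relaxation time, and unfolding gives $\ls(H)^{-1}\le \alpha\bigl(1+\sum_{\ell=1}^{H}\gap(\ell)^{-1}\bigr)$. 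Your description that one can ``defer the variance-to-Dirichlet conversion to one invocation of $\Trel$ at the end,'' with $L(f)$ a sum of single-site conditional variances, does not work: a sum of single-site conditional variances \emph{is} the heat-bath Dirichlet form, so no $\Trel$ factor would appear at all, contradicting the stated bound.

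The missing ingredient is a separate monotonicity lemma, proved in the paper as Lemma~\ref{lem:gap}: $\gap(\ell)\le\gap(\ell-1)$ for the complete tree (when $k\le b+2$), established by a direct test-function comparison of Dirichlet forms. Only with this can one replace each $\gap(\ell)^{-1}$ by $\gap(H)^{-1}=\Trel$ and obtain $\ls^{-1}\le (H{+}1)\,\alpha\,\Trel\le 2\Trel\log_b(n)\ln(k)$. Without it the sum $\sum_\ell\gap(\ell)^{-1}$ is uncontrolled. Your sketch should therefore (i) accept that the Poincar\'e inequality is applied at every level, producing an additive recursion in $\gap(\ell)^{-1}$, and (ii) supply the monotonicity of the spectral gap in the height as a separate step.
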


Since the inverse of the log-Sobolev constant gives a relatively tight
upper bound on the mixing time [see inequality \eqref{eqn:TmixLS} in
Section~\ref{preliminaries}], using Theorem~\ref{thm:LS} we are able to
complete the proofs of the upper bounds in Theorem~\ref{main-theorem}.

\subsection{Lower bounds}

Our proof of the lower bound in Theorem~\ref{main-theorem} when $C < 1$
builds upon the approach used in~\cite{GJK}. They lower bounded the
relaxation time by upper bounding the conductance of the Glauber
dynamics on the subset $S\subseteq\Omega$ where the root is \textit{frozen}
(meaning that the configuration at the leaves uniquely
determine the color of the root) to some color in $\{1,2,\ldots
,\lfloor
k/2\rfloor\}$. They showed the conductance of $S$ satisfies $\Phi_S =
O(n^{-1/6C})$ when $0 < C < 1/2$, which implies [by \eqref
{eqn:TmixTrel} and \eqref{eqn:lbound-Trel}
in Section~\ref{preliminaries}] that $\Tmix\ge\Omega(\Trel) =
\Omega
(n^{1/6C})$.

We improve their bound on the conductance of $S$ by analyzing
the probability that for a given leaf $z$, in a random coloring $\sigma
$ of the complete tree, the root is frozen and changing the color of
$z$ in $\sigma$ to some other color unfreezes the root.
We prove that the number of such leaves in most colorings that freeze
the root is $O(n^{-1/C+1+o_b(1)})$. Since the probability of recoloring
a specific leaf is $1/n$, then intuitively we have $\Phi_S =
O(n^{-1/C+o_b(1)})$, and hence $\Tmix\ge\Omega(\Trel) = \Omega
(n^{1/C-o_b(1)})$.
A complete analysis of the lower bound is in Section~\ref{lbbelow}, and
in the analysis we will see that the $o_b(1)$ error term is
$b^{1-1/C}/C$ when $1/2 < C <1$ and zero when $C\leq1/2$.

Finally, we will show in Section~\ref{generalization} how all of the proofs
generalize for $k = o(b/\ln b)$, and thus prove Theorem~\ref{main-theorem2}.

\section{Technical preliminaries}
\label{preliminaries}

Let $P(\cdot,\cdot)$ denote the transition matrix of the Glauber
dynamics, and $P^t(\cdot,\cdot)$ denote the $t$-step transition probability.
The total variation distance at time $t$ from initial state $\sigma$ is
defined as
\[
\|P^t(\sigma,\cdot) - \pi\|_{\mathrm{TV}} := \frac{1}{2}\sum_\eta
|P^t(\sigma
,\eta)-\pi(\eta)|.
\]

The mixing time $\Tmix$ for a Markov chain is then defined as
\[
\Tmix= \min_t \Bigl\{ \max_\sigma\{ \|P^t(\sigma,\cdot) - \pi\|
_{\mathrm{TV}}
\}
\le1/2\eee\Bigr\}.
\]

Given two copies, $(X_t)$ and $(Y_t)$, of the Markov chain at time
$t>0$, recall that a (one-step) coupling of $(X_t)$ and $(Y_t)$ is a
joint distribution whose left and right marginals are identical to the
(one-step) evolution of $(X_t)$ and $(Y_t)$, respectively. The Coupling
lemma~\cite{Aldous} (cf. Theorem 5.2 in~\cite{LPW}) guarantees
that if
there is a coupling and time
$t >0$, so that for every pair $(X_0, Y_0)$ of initial states
$\Probability[{X_t \neq Y_t}] \le1/2\eee$ under the coupling, then
$\Tmix\le t$.

Let $\lambda_1 \ge\lambda_2 \ge\cdots\ge\lambda_{|\Omega|}$ be the
eigenvalues of the transition matrix $P$. The spectral gap $\gap$ is
defined as $1-\lambda_2$.
The relaxation time $\Trel$ of the Markov chain is then defined as
$\gap
^{-1}$, the inverse of the spectral gap.
It is an elementary fact that the mixing time gives a good upper bound
on the relaxation time (see, e.g.,~\cite{DGJM} for the following
bound), which
we will use in our analysis.
%
%
\begin{equation}
\label{eqn:TmixTrel}
\Trel= O(\Tmix).
\end{equation}
Note that our definition of relaxation time following~\cite{BKMP,MSW}
is slightly different from the standard definition, the inverse of the
absolute spectral gap (see, e.g., Chapter~13 in~\cite{LPW}), that is,
$(1 - \max\{|\lambda_2|, |\lambda_{|\Omega|}|\})^{-1}$.
It would be easier for us to state the results related to the block
dynamics under our current definition, and it is a standard fact that
by passing to a lazy chain, the two definitions are identical.
Introducing the laziness to the Glauber dynamics only puts an extra
factor of two to the mixing time, and therefore it will not affect our
asymptotic results.

Since we will also work with the logarithmic Sobolev constant of a
(finite) Markov chain, we briefly recall here the variational
definition of both the spectral gap and the log-Sobolev constant.

Let $f$ be a function (vector) from $\Omega$ to $R$, $\pi$ be the
uniform distribution over $\Omega$ and $\mu$ be any probability
distribution over $\Omega$. Let $\D(f)$ be the standard \textit{Dirichlet
form} of the heat-bath Glauber dynamics defined as
\[
\D(f) = \frac{1}{2}\sum_{\sigma} \sum_{\sigma'} \bigl(f(\sigma) -
f(\sigma
')\bigr)^2 \pi(\sigma) P(\sigma, \sigma').
\]

Let $E_{\mu}(f)$ be the average of $f$ under the distribution $\mu$, and
let $\var_{\mu}(f) := E_{\mu}(f^2) - E^2_{\mu}f$ be the corresponding
variance, which can also be written as
\[
\var_{\mu}(f) = \frac{1}{2}\sum_{\sigma} \sum_{\sigma'}
\bigl(f(\sigma) -
f(\sigma')\bigr)^2 \mu(\sigma) \mu(\sigma').
\]

Let $\ent_{\mu}(f) := E_{\mu}(f\log f) - E_{\mu}(f) \log(E_{\mu}(f))$.
When it is clear what the underlying distribution is we will drop the
subscript $\mu$ in the notation $\ent(f)$.

The spectral gap $\gap$ is equivalently defined as (see, e.g., Chapter
13, in~\cite{LPW})
\[
\gap= \inf_f\frac{\D(f)}\var(f),
\]
and the log-Sobolev constant $\ls$ is defined as (see, e.g.,~\cite{DS}),
\[
\ls= \inf_{f\ge0}\frac{\D(\sqrt{f})}{\ent(f)},
\]
where the infimum in both equations is over nonconstant functions $f$.


For the upper bounds on the mixing time of the dynamics on the whole
tree, we also use the following well-known relationship between the
mixing time and the inverse of the log-Sobolev constant (see, e.g.,
\cite{DS} for more details):
%
%
\begin{equation}
\label{eqn:TmixLS}
\Tmix= O\biggl(\ls^{-1} \ln{\ln{\frac{1}{\min_{\sigma\in
\Omega} \{
\pi(\sigma)\}}}}\biggr).
\end{equation}


To lower bound the mixing and relaxation times we analyze
the conductance. The conductance of the Markov chain on $\Omega$ with
transition matrix $P$ is given by $
\Phi=\min_{S\subseteq\Omega} \{\Phi_S\}
$,
where $\Phi_S$ is the conductance of a specific set $S \subseteq
\Omega
$ defined as
\[
\Phi_S=\frac{\sum_{\sigma\in S}\sum_{\eta\in\bar{S}}\pi(\sigma
)P(\sigma
,\eta)}
{\pi(S)\pi(\bar{S})}
.
\]

Thus, a general way to find a good upper bound on the conductance is to
find a set $S$ such that the probability of escaping from $S$ is
relatively small.
The well-known relationship between the relaxation time and the
conductance is established in~\cite{LawlerSokal} and~\cite{SinclairJerrum},
and we will use the form
%
%
\begin{equation}
\label{eqn:lbound-Trel}
\Trel= \Omega(1/\Phi) ,
\end{equation}
for proving the lower bounds.

\section{\texorpdfstring{Upper bound on mixing time for $C\le1$: Proof of Theorem \protect\ref{thm:star-upper-below}}
{Upper bound on mixing time for C <= 1: Proof of Theorem 4}}
\label{upbelow}
In this section, we upper bound
the mixing time of the Glauber dynamics on the star graph $G^* = (V,E)$
when $k=Cb/\ln b$ for any $C \le1$. To be more precise, let $V = \{
r,\ell_1,\ldots,\ell_b\}$, where $r$\vadjust{\goodbreak} refers to the root and $\ell
_1,\ldots,\ell_b$ are the $b$ leaves and $E = \{(r,\ell_1),\ldots
,(r,\ell
_b)\}$.
For convenience, here we let
\[
\eps:= 1/C - 1,
\]
and hence $k = b/((1+\eps)\ln b)$.

We use the maximal one-step coupling, originally studied
for colorings by Jerrum~\cite{Jerrum}, to upper bound the mixing time
of the Glauber dynamics on general graphs.
For a coloring $X\in\Omega$, let $A_{X}(v)$ denote the set of available
colors of $v$ in the coloring $X$, that is,
$A_{\sigma}(v) = \{c\in\C\dvtx \forall u \in N(v), \sigma(u) \neq
c \}$.
The coupling $(X_t,Y_t)$ of the two chains is done by choosing the same
random vertex $v_t$ for recoloring at step $t$ and maximizing the
probability of the two chains choosing the same update for the color of $v_t$.
Thus, for each color $c\in A_{X_t}(v) \cap A_{Y_t}(v)$, with
probability $1/\max\{|A_{X_t}(v)|,|A_{Y_t}(v)|\}$ we set
$X_{t+1}(v)=Y_{t+1}(v)=c$. With the remaining probability, the color
choices for $X_{t+1}(v)$ and $Y_{t+1}(v)$ are coupled arbitrarily.

We prove the theorem by analyzing the coupling in rounds, where each
round consists of $T:=20 b\ln{b}$ steps.
Our main result is the following lemma which says
that in each round, we have a good probability of coalescing (i.e.,
achieving $X_t=Y_t$).

\begin{lemma}
\label{lem:upper-below}
For all $\eps\geq0$, there exists $b_0(\eps)$ such that for all
$b>b_0(\eps)$ if $k = b/((1+\eps)\ln b)$ and $T=20b\ln{b}$ for all
$(x_0,y_0)\in\Omega\times\Omega$, the following holds:
\[
\Probability[{X_{T} = Y_{T}} \mid{X_0 = x_0, Y_0 = y_0} ] \ge
\cases{
{\bigl(20(1+\eps)b^\eps\ln b\bigr)}^{-1}, & \quad$\mbox{if }\eps
> 0 ,$
\vspace*{2pt}\cr
{(20\ln^3 b)}^{-1}, &\quad$\mbox{if } \eps=0.$}
\]
\end{lemma}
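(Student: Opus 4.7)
The plan is to analyze the maximal one-step coupling directly and show that with probability at least the stated bound, $(X_t,Y_t)$ coalesces by time $T$.  I would begin by recording the per-step behavior: if the chosen vertex is a leaf, the two chains update identically when the roots agree, and with probability $1/(k-1)$ produce a swap-type disagreement $X_{t+1}(\ell_i)=Y_t(r),\ Y_{t+1}(\ell_i)=X_t(r)$ when the roots disagree; if the chosen vertex is the root, the two chains pick the same new color with probability $|A_{X_t}(r)\cap A_{Y_t}(r)|/\max(|A_{X_t}(r)|,|A_{Y_t}(r)|)$.  In particular, if the leaves of $X_t$ and $Y_t$ are identical, then $A_{X_t}(r) = A_{Y_t}(r)$, and any subsequent root update produces $X_{t+1}(r)=Y_{t+1}(r)$; once root agreement is achieved, every subsequently recolored leaf is coupled identically, so by coupon-collecting every leaf within the remaining time, coalescence is reached with probability $1-o_b(1)$.

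Given these dynamics, the task reduces to lower bounding the probability that at some time $t_1 \le T/2$ the coupled root update aligns the two chains at the root, after which the remaining $T - t_1$ steps should, with high probability, couple every leaf without ``undoing'' the root agreement.  I would split $[0,T/2]$ into $\Theta(\ln b)$ ``cycles'' of $\Theta(b)$ steps; in each cycle the root is selected once in expectation, and between cycles the leaves are refreshed enough that successive root opportunities can be treated as nearly independent.  The central quantity is the number of ``free'' colors at the root (colors used by no leaf) in a near-stationary configuration: using $(1-1/k)^b = b^{-(1+\eps)}$,
\[
k(1-1/k)^b \ \approx \ \frac{1}{(1+\eps)\,b^\eps \ln b}.
\]
Each cycle's root update thus has probability of order $1/((1+\eps)\,b^\eps \ln b)$ of aligning the two roots through a coupled jump to a shared free color; a union bound over the $\Theta(\ln b)$ cycles then yields the desired $\Omega(1/(b^\eps \ln b))$ bound when $\eps>0$.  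The critical $\eps=0$ case, in which the mean number of free colors is only $\Theta(1/\ln b)$, requires a sharper second-moment / Poissonization argument to extract the stated $\Omega(1/\ln^3 b)$ coalescence probability.

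The main obstacle is controlling the interplay between accumulated swap-type leaf disagreements and the available set at the root.  A Chernoff bound on the number of swap-type leaves --- each leaf update adds at most one with probability $1/(k-1)$, and only while the roots disagree --- shows that with high probability there are $O((1+\eps)\ln^2 b)$ such leaves after $T$ steps, which is negligible compared to $b$.  The heart of the proof is then showing that (i) these disagreements perturb the free-color distribution at the root by only a lower-order term, so the per-cycle coalescence probability remains of the stated order uniformly in the starting pair $(x_0,y_0)$, and (ii) after $t_1$, even if the root is recolored again before all leaves are coupled, the available sets remain sufficiently aligned that the root agreement persists with high probability.  These robustness arguments are what convert the heuristic union-bound calculation above into a bona fide coupling estimate.
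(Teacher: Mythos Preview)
Your proposal captures several of the right ingredients --- the maximal coupling, the free-color count $k(1-1/k)^b\approx 1/((1+\eps)b^\eps\ln b)$, and the two-phase structure (couple the root, then sweep the leaves) --- but it departs from the paper's argument in a way that creates real difficulties, and the $\eps=0$ case is left essentially unaddressed.

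The paper's proof is organized around a structural warm-up lemma you do not have: after $\twarm=8(b+1)\ln b$ steps, with high probability (for $\eps>0$) or with probability at least $1/(2\ln^2 b)$ (for $\eps=0$), \emph{every} leaf disagreement is of the \emph{same} swap type $(c_1,c_2)$. The mechanism for $\eps>0$ is that each root is \emph{frozen} (has a unique available color) at every one of its $O(\ln b)$ updates during the warm-up, so the root colors never move and all swap-type disagreements inherit the same pair. This single-type property gives the clean bound $|A_{X_t}(r)\oplus A_{Y_t}(r)|\le 2$, after which one looks at a \emph{single} stopping time $T_1$ (the first root update with a common available color) and shows $\Prob{T_1<4(b+1)\ln b}\ge 1/(4(1+\eps)b^\eps\ln b)$ by comparison with an auxiliary process on $k-2$ colors. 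Your ``$\Theta(\ln b)$ nearly independent cycles'' device is not needed (and ``union bound'' is the wrong direction anyway; a single shot already gives the stated rate).

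Your alternative route --- bound the \emph{number} of disagreements by $O(\ln^2 b)$ and argue they perturb the free-color distribution only to lower order --- is plausible for part~(i) but makes part~(ii) substantially harder. In the paper, once the root agrees and all leaf disagreements are of one type $(c_1,c_2)$, the root can only decouple if $X_t(r)$ hits $c_2$ or $Y_t(r)$ hits $c_1$; a nontrivial comparison with a subset process shows this fails with probability at most $2/5$ over the final $4(b+1)\ln b$ steps. In your setting, disagreements may be of many types, so there is no two-color bottleneck for decoupling, and your one-line claim that ``the available sets remain sufficiently aligned'' would require an argument of comparable strength to the paper's Lemma~\ref{lem:leafcoupling}.

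Finally, for $\eps=0$ you only gesture at a ``second-moment / Poissonization'' argument. The paper obtains the $1/\ln^3 b$ factor by an explicit conditioning: after bounding $|D_{T_0}^L|\le 4\ln b$ with high probability, it asks that the root not be selected for the next $2b\ln\ln b$ steps (probability $\ge 1/\ln^2 b$), during which all $O(\ln b)$ disagreeing leaves are refreshed; this produces the same-type property and then contributes one further $1/\ln b$ from the common-available-color event. Nothing in your sketch produces these specific factors, and a second-moment bound on the free-color count alone does not; the $\eps=0$ case is a genuine gap in your proposal.
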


It is then straightforward to prove Theorem~\ref{thm:star-upper-below}.
\begin{pf*}{Proof of Theorem \protect\ref{thm:star-upper-below}}
For $\eps>0$, let $p_T := (20(1+\eps)b^\eps\ln b)^{-1}$;
and for $\eps=0$ let $p_T := (20\ln^3 b)^{-1}$.
By repeatedly applying Lemma~\ref{lem:upper-below} we have, for all
$(x_0,y_0)$,
\[
\Probability[{X_{2iT} \neq Y_{2iT}} \mid{X_0 = x_0, Y_0 = y_0} ]
\leq
(1-p_T)^{2i}\leq1/2\eee
\]
for $i=1/p_T$.
Therefore, by applying the coupling lemma, mentioned in Section~\ref
{preliminaries}, the mixing time is
$O((1+\eps)b^{1+\eps}\ln^2 b)$ for $\eps> 0$
and $O(b\ln^4 b)$ for $\eps= 0$.
\end{pf*}

\subsection{Overview of the coupling argument}
\label{upper:couplingproofidea}

Before formally proving Lem\-ma~\ref{lem:upper-below} we give a
high-level overview of its proof.
We will analyze the maximal one-step coupling on the star graph $G^*$.
We say a vertex $v$ ``disagrees'' at time $t$ if $X_t(v) \neq Y_t(v)$,
otherwise we say the vertex $v$ ``agrees.''
We denote the set of disagreeing vertices at time $t$ of our coupled
chains by
\[
D_t = \{v\in V\dvtx X_t(v)\neq Y_t(v)\},
\]
and we use $D_t^L = D_t\setminus\{r\}$ to represent the set of
disagreeing leaves.
When we use the term ``with high probability'' in this section,
it means that the probability goes to 1 as $b$ goes to infinity.


If the coupling selects a leaf $\ell$ to recolor at time $t$, then the
probability that $\ell$ disagrees in $X_t$ and $Y_t$ is at most $1/(k-1)$,
and with probability at least
$(k-2)/(k-1)$, the leaf will use the same color that is chosen
uniformly at
random from $\C\setminus\{X_t(r),Y_t(r)\}$. We also know that if we
simply assign a random color from $\C$ to each leaf, with probability
at least
$\Omega(1/(b^\eps\ln b))$, there is a color in $\C$ that is unused in
any leaf.
This last point hints at the success probability in the statement of Lemma
\ref{lem:upper-below}.

We analyze the $T$-step epoch in three stages. The warm-up round is of length
$\twarm:= 8(b+1)\ln b$ steps.
We will show in Lemmas~\ref{lem:warmup} and~\ref{lem:warmup0} that
with good probability, after the warm up,
all of the leaf disagreements will be of the same form
in the sense that they will have the same pair of colors.

The next stage is of a random length $T_1$, which is defined as the
first time (after $\twarm$) where we are recoloring the root, and
the root has a common available color in $(X_t)$ and $(Y_t)$. We prove
in Lemma~\ref{lem:common-avail} that with probability $\Omega
(1/b^{\eps
}\ln{b})$,
$T_1<4(b+1)\ln{b}$. We then have probability at least $1/2$ of the
root agreeing
after it is updated, and then after at most
$\Tlev:=4(b+1)\ln{b}$ further steps we are likely to coalesce
since we just need to recolor each leaf at least once before the root
changes back
to a disagreement.

\subsection{\texorpdfstring{Coupling argument: Proof of Lemma \protect\ref{lem:upper-below}}
{Coupling argument: Proof of Lemma 7}}

We begin with a basic observation about the maximal one-step coupling.

\begin{observation}
\label{prop:avail}
Let $\C(D_t^L) := \bigcup_{\ell\in D_t^L} \{X_t(\ell), Y_t(\ell)\}
$ denote
the set of colors that appear in the disagreeing leaves at time $t$. Then
$A_{X_t}(r) \oplus A_{Y_t}(r) \subseteq\C(D_t^L)$.
\end{observation}

This is simply because those colors that appear on the leaves with agreements
are both unavailable in $X_t$ and $Y_t$ for the root.
We now analyze the first stage of the $T$-step epoch.

\begin{proposition}
\label{prop:roothit} The probability that in $T_0 = 4(b+1)\ln b$ steps,
the coupling $(X_t,Y_t)$ [or the Glauber dynamics $(X_t)$] will recolor
the root at most $20\ln b$ times and recolor every leaf at least once
is at least $1 - 2b^{-3}$.
\end{proposition}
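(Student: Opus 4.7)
The plan is to prove the two events separately using Chernoff bounds and a union bound, and then combine them via a final union bound. The two events are (a) the root is selected for recoloring at most $20\ln b$ times during the $T_0$ steps, and (b) every leaf is selected for recoloring at least once during the $T_0$ steps. Both counts are sums of independent indicators over the $T_0$ coupling steps, so the stated Chernoff bound (Proposition right above) applies directly.

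For (a), let $X$ be the number of steps in $[0,T_0)$ at which the root is chosen for recoloring. Since the vertex chosen at each step is uniform over the $b+1$ vertices of $G^*$, $X$ is a sum of $T_0 = 4(b+1)\ln b$ independent Bernoulli trials with $p = 1/(b+1)$, so $\mu := \Exp{X} = 4\ln b$. Thus $\{X > 20\ln b\} = \{X > (1+4)\mu\}$, and $\delta = 4 < 2\eee - 1$, so the Chernoff bound yields
\[
\Prob{X > 20\ln b} \le \exp(-\delta^2\mu/4) = \exp(-16\ln b) = b^{-16}.
\]

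For (b), fix a leaf $\ell$ and let $Y_\ell$ denote the event that $\ell$ is never selected during $T_0$ steps. Then
\[
\Prob{Y_\ell} = \left(1 - \frac{1}{b+1}\right)^{T_0} \le \exp\!\left(-\frac{T_0}{b+1}\right) = \exp(-4\ln b) = b^{-4}.
\]
A union bound over the $b$ leaves gives $\Prob{\bigcup_\ell Y_\ell} \le b \cdot b^{-4} = b^{-3}$.

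Finally, a union bound combining the failure events in (a) and (b) gives total failure probability at most $b^{-16} + b^{-3} \le 2b^{-3}$ for all sufficiently large $b$, establishing the claim. The argument is essentially routine — the only thing to double-check is that the constant $20$ in $20\ln b$ is large enough to keep $\delta$ below the $2\eee - 1$ threshold required by the Chernoff bound, which it is ($\delta = 4 < 2\eee - 1 \approx 4.44$). Note that the statement applies equally to the coupled chain and to either single chain, since the distribution of the selected vertex is identical in all cases.
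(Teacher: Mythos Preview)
Your proof is correct and follows essentially the same approach as the paper: a union bound over leaves for part (b) and the stated Chernoff bound with $\delta=4$ for part (a), combined by a final union bound. The only cosmetic difference is that the paper records the weaker estimate $b^{-3}$ for the root-hit probability while you compute the sharper $b^{-16}$, but this does not affect the argument.
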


\begin{pf}
Using the union bound the probability that there is a leaf which is not
recolored in $T_0$ steps is at most
\[
b\biggl(1-\frac{1}{b+1}\biggr)^{4(b+1)\ln b} \le b^{-3}.
\]
Now, let $N$ be the number of times the root is recolored in $T_0$ steps.
The expectation ${\Expectation[{N}]}$ is $4\ln b$. Then, by the\vadjust{\goodbreak}
Chernoff bound
(see, e.g., Theorem~4.5, Part 2 in~\cite{Upfal}),
\[
\Probability[{N \ge20\ln b}] \le\Probability\bigl[{N \ge
(1+4){\Expectation[{N}]}}\bigr] \le b^{-3}.
\]
Therefore the lemma holds by the union bound.
\end{pf}

Then we will prove that in $\twarm= 2T_0$ steps, with high probability
all of the leaf disagreements are of the same type when $\eps> 0$.


\begin{lemma}
\label{lem:warmup}
For any $\eps> 0$ and $k > (1+\eps) b/ \ln b$, for any pair of initial
states $(x_0,y_0)$,
\[
\Probability[{\forall\ell\in D_\twarm^L, X_\twarm(\ell) =
Y_\twarm(r) \wedge Y_\twarm(\ell) = X_\twarm(r)} \mid{x_0, y_0} ]
\ge
1 - O\biggl(\frac{1}{b^{\eps}}\biggr).
\]
\end{lemma}

\begin{pf}
The idea is that if we just look at one chain, say $(X_t)$, then after~$T_0$ steps,
with high probability the root is frozen. Moreover, the
root is likely to continue to be frozen
for the remainder of the $\twarm$ steps since we recolor the root at
most $O(\ln{b})$ times. In the worst case the root is frozen to a
disagreement, say $X_t(r)=2$ and $Y_t(r)=1$.
Then after recoloring a leaf $\ell$ at time $t'$ where $t<t'<\twarm$,
the only possible disagreement
is $X_{t'}(\ell)=1, Y_{t'}(\ell)=2$. Hence, it suffices to recolor each
leaf at
least once.

Let $\E$ be the event that in the first $T_0$ steps, every leaf is
recolored at least once and in another $4(b+1)\ln{b}$ steps, every leaf
is recolored again at least once, and the root is recolored at most
$20\ln b$ times.
We are first going to bound that for $t > T_0$,
%
%
\begin{equation}
\label{eqn:warmup1}
\Probability[{|A_{X_t}(r)| > 1} \mid{\E} ] \le\frac{1}{(1+\eps
)b^\eps\ln b}
:= p_0,
\end{equation}
and the same thing happens for $Y_t$.

Let $G_W$ be the graph with $b$ isolated vertices $\{v_1,\ldots,v_b\}$,
corresponding to the leaves $\{\ell_1,\ldots,\ell_b\}$. Let $(W_t)$
be a
Glauber process on $G_W$ using $k-1$ colors from another color set $\C
_W$. We are going to define $W_0$ and couple $(W_t)$ with $(X_t)$ such
that $|A_{X_t}(r)|= |A_{W_t}| + 1$
at any time $t$, where $A_{W_t} := \{c\in\C_W \dvtx \forall v_i, W_t(v_i)
\neq c\}$.
To do this, for every $t$ we are going to define a bijection $f_t
\dvtx \C
\setminus\{X_t(r)\} \to\C_W$ such that $f_t(X_t(\ell_i)) = W_t(v_i)$
for all $i$. Notice that if such a bijection exists, then
$|A_{X_t}(r)|= |A_{W_t}| + 1$.

At time $t=0$, pick any bijection $f_0$ from $\C_W$ to $\C\setminus\{
X_0(r)\}$. Define $W_0$ by $W_0(v_i) = f(X_0(\ell_i))$ for all $i$. We
will update $f_t$ only when we choose the root to recolor at time $t$
in the coupling of $(W_t)$ and $(X_t)$.
To do the coupling at time $t+1$, we first choose a vertex $v$ in $G^*$
to recolor:
\begin{itemize}
\item If $v = \ell_i$, then we choose a random color $c$ that is
different from $X_t(r)$ to recolor~$v$.
Correspondingly, we choose the vertex $v_i$ in $G_W$ to recolor using
color $f_t(c)$.
\item If $v = r$, then we choose a random color $c$ from $A_{X_t}(r)$
to recolor the root in~$G^*$. Correspondingly, we update the mapping
$f_t$ in the following natural way:
$f_t(X_{t-1}(r))= f_{t-1}(c)$ [and $f_t(c)$ is undefined].
\end{itemize}

Since $(W_t)$ itself is a Glauber process that recolors the vertices of
$G_W$ uniformly at random from $C_W$, conditioning on $\E$, simple
calculations yield that for any $t > T_0$,
\[
\Probability[{|A_{W_t}| \ge1} \mid{\E} ] \le\frac{1}{(1+\eps
)b^\eps\ln b} .
\]
Then \eqref{eqn:warmup1} follows by coupling.

Since the same thing happens for $(Y_t)$, and the root is recolored at
most $20\ln b$ times, then by the union bound, conditioning on $\E$,
the probability that at each time we try to recolor the root after
$T_0$ steps, the root is always frozen in both copies is at least $
1 - (40\ln b)(p_0) = 1 - 40/((1+\eps)b^{\eps}) .
$
Finally, by Proposition~\ref{prop:roothit}, $\E$ happens with high
probability, and hence the lemma holds.
\end{pf}

Note that for the warm-up stage, we need to show, with probability at
least $1/\operatorname{poly}(\log b)$, that for $\eps\ge0$,
all of the leaf disagreements are of the same type in $O(b\ln b)$
steps. This is easier to prove for the $\eps> 0$ case -- that this
happens with high probability, if we run the dynamics for $\twarm=
8(b+1)\ln b$ steps.
For the threshold case when $\eps= 0$, we will prove a slightly weaker
lemma, in the sense that the successful probability will be at least
$\Omega(1/\ln^2 b)$.


\begin{lemma}
\label{lem:warmup0}
Let $\twarmp= T_0 + 2b\ln\ln b$. For $k = b/\ln b$, for any pair of
initial states $(x_0,y_0)$,
\[
\Probability[{\forall\ell\in D_{\twarmp}^L, X_{\twarmp}(\ell) =
Y_{\twarmp }(r) \wedge Y_{\twarmp}(\ell) = X_{\twarmp}(r)} \mid
{x_0,y_0} ] \ge1/(2\ln
^2 b).
\]
\end{lemma}

\begin{pf}
We use a different approach to prove this lemma, since it is not true
that the root will still always be frozen during $\twarmp$ steps with
high probability.

Let $T_0 = 4(b+1)\ln b$. We first prove that after $T_0$ steps, with
high probability, the number of disagreeing leaves is at most $O(\ln
b)$, namely,
%
%
\begin{equation}
\label{eqn:epszero1}
\Probability[{|D_{T_0}^L| \ge4 \ln b} \mid{X_0 = x_0 , Y_0 = y_0}
] \le\frac{2}{b^2}.
\end{equation}

To prove \eqref{eqn:epszero1}, we
construct a simpler process that stochastically upper bounds the number
of disagreements.
We define the following Markov chain $(U_t)$
on 2-colorings of the graph $G_U$ which consists of $b$ isolated
vertices $\{v_1,\ldots,v_b\}$. We view the set of colors as $\{0,1\}$. In
each step, a random vertex $v_i$ is chosen,
then with probability $1/(k-1)$, $v_i$ is
recolored to $1$,
and with probability $1-1/(k-1)$,\vadjust{\goodbreak} $v_i$ is recolored to $0$. Let $D^U_t
= \{v \in\{v_1,v_2,\ldots,v_b\}\dvtx U_t(v)=1\}$.
The initial state $U_0$ is constructed in the following way: for any $i
> 0$, $U_0(v_i) = 1$ if and only if $x_0(\ell_i) \neq y_0 (\ell_i)$.
By associating the $b$ vertices of $G_U$ with the leaves of $G^*$,
we can easily couple
the process $(U_t)$ with $(X_t,Y_t)$ such that $|D^U_t| \ge|D_t^L|$.

Let $\E$ denote the event that all of the vertices of $G_U$ are
recolored at least once in $T_0$ steps.
Note $\Probability[{\E}] \ge1 - 1/b^2$.
Conditioned on $\E$, the expected size of $|D^U_{T_0}|$ is
$b/(k-1)\approx\ln{b}$. Then we have
\begin{eqnarray*}
\Probability[{|D^U_{T_0}| \ge4 \ln b}] &\le& \Probability
[{|D^U_{T_0}| \ge4\ln b} \mid{\E} ] + \Probability[{\E}] \\
& \le& \frac{2}{b^2}.
\end{eqnarray*}
Here, for the last inequality, we have used the Chernoff bounds (see,
e.g., Theorem~4.5 Part 2 in~\cite{Upfal}).
Since $|D^U_t| \ge|D_t^L|$, this proves \eqref{eqn:epszero1}.

Hence, with high probability there are $O(\ln b)$
disagreeing leaves in $G^*$ at time~$T_0$.
Notice that from time $T_0$, if we recolor all of the disagreeing
leaves before we
recolor the root again, then all of the remaining disagreements in the
leaves will be of the same type [more precisely, for such a
leaf $\ell$ that becomes a disagreement at time $t$, we
will have that $X_t(\ell)=Y_{T_0}(r)$ and $Y_t(\ell)=X_{T_0}(r)$], and
this implies the desired conclusion of the lemma.
To this end,
let $\E_2$ be the event that the root is not chosen from
recoloring from time $T_0$ to $\twarmp$.
Let $\E_3$ be the event that each leaf in $D_{T_0}^L$ is
recolored at least once in the interval of times $[T_0,\twarmp]$.
By simple calculations, we have that
%
%
\begin{equation}
\Probability[{\E_2}] \ge\ln^{-2} b,\qquad
\Probability[{\E_3} \mid{\E_2} ] \ge1 - \frac{O(1)}{\ln b}.
\end{equation}
Therefore, conditioned on $|D^L_{T_0}| \le4 \ln b$,
from time $T_0$ to $\twarmp$ with probability at least $2/(3\ln^2 b)$,
both $\E_2$ and $\E_3$ happen, which implies all of the leaf
disagreements will be of the same type at time $\twarmp$.

In conclusion, combining the above bounds with \eqref{eqn:epszero1}, we
proved that with probability at least $1/(2\ln^2 b)$, all of the
uncoupled leaves are of the same type at time~$\twarmp$.
\end{pf}

After we succeed in the warm-up stage, meaning that all of the leaf
disagreements are of the same type, we enter the root-coupling stage,
where we try to couple the root. Let $T_1$ be the first time that there
is a common available color in the root, and the coupling chain selects
the root to recolor, that is,
\[
T_1 := T_1^{XY} =
\min\{t\dvtx A_{X_t}(r) \cap A_{Y_t}(r) \neq\varnothing
\mbox{ and the root $r$ is selected at step $t$}\}.
\]

\begin{lemma}
\label{lem:common-avail}
For $\eps\ge0$, for any pair of initial states $(x_0,y_0)$ where
all of the leaf disagreements are of the same type
[i.e., there is a pair of colors $c_1,c_2$ such that
for all $\ell\in D^L_0$, we have $x_0(\ell) = c_1$ and $y_0(\ell) =
c_2$], we have
\[
\Probability[{T_1^{XY} < 4(b+1)\ln b} \mid{(X_0,Y_0)=(x_0,y_0)} ] >
\frac
{1}{4(1+\eps)b^\eps\ln b}.\vadjust{\goodbreak}
\]
\end{lemma}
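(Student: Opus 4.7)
My plan is to apply the Paley--Zygmund (second moment) inequality to the random variable counting those steps $t\in[1,T]$ at which the root is selected and a common available color exists. Setting $T := 4(b+1)\ln b$, for each $t \in \{1,\ldots,T\}$ define
\[
I_t := \ind{r \text{ is selected at step } t}\cdot\ind{A_{X_{t-1}}(r)\cap A_{Y_{t-1}}(r)\neq\emptyset},\qquad X := \sum_{t=1}^{T} I_t.
\]
Since $\{T_1 < T\} = \{X \geq 1\}$, Paley--Zygmund gives $\Pr[T_1 < T] \geq (\Exp{X})^2/\Exp{X^2}$. I aim to show $\Exp{X} = \Omega(1/((1+\eps) b^\eps))$ and $\Exp{X^2} = O(\ln b /((1+\eps) b^\eps))$, which together yield the required $\Omega(1/((1+\eps) b^\eps \ln b))$.

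\textbf{First moment.} The crucial reduction uses Proposition~\ref{prop:avail}: under the same-type disagreement hypothesis---which in the context of this section implicitly means $x_0(r)=c_2$ and $y_0(r)=c_1$, a configuration produced by the warm-up stage---the maximal coupling preserves this ``same type'' invariant under leaf updates (the only new disagreement a leaf recoloring can create is of type $(c_1,c_2)$), and for any $c\notin\{c_1,c_2\}$ one has $c\in A_{X_t}(r)\iff c\in A_{Y_t}(r)$. Thus common availability is equivalent to some $c\notin\{c_1,c_2\}$ being absent from every leaf in $X_t$. For $t\in[T/2,T]$ each leaf has been updated $\Omega(\ln b)$ times in expectation, so on a high-probability event its color is approximately i.i.d.\ uniform on $\C\setminus\{c_2\}$; the probability that a specific $c\notin\{c_1,c_2\}$ is absent from every leaf is $(1-1/(k-1))^b=\Theta(b^{-(1+\eps)})$, and a Bonferroni argument over the $\Theta(k)=\Theta(b/((1+\eps)\ln b))$ valid choices of $c$ (the second-order corrections of order $k^2 b^{-2(1+\eps)}$ being negligible for $\eps\ge 0$) yields $\Pr[\text{common availability at time }t] = \Theta(1/((1+\eps)b^\eps\ln b))$. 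Summing $(1/(b+1))\cdot\Pr[\text{common avail at }t-1]$ over $t\in[T/2,T]$ then gives $\Exp{X}=\Omega(1/((1+\eps)b^\eps))$.

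\textbf{Second moment.} We have $\Exp{X^2}=\Exp{X}+2\sum_{s<t}\Exp{I_s I_t}$. For $s\neq t$, the root-selection events at steps $s,t$ are mutually independent and independent of the state, contributing a factor $(b+1)^{-2}$; combined with the crude bound $\Pr[\text{common avail at both } s{-}1,t{-}1]\le\Pr[\text{common avail at }t{-}1]$, summing over pairs yields $\sum_{s<t}\Exp{I_s I_t}=O\bigl((T/(b+1))^2\cdot 1/((1+\eps)b^\eps\ln b)\bigr)=O(\ln b/((1+\eps)b^\eps))$, which also dominates $\Exp{X}$. Paley--Zygmund applied to these moment estimates yields the claimed lower bound.

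\textbf{Main obstacle.} The delicate step is the first-moment analysis. The principal difficulty is that the root itself is updated $\Theta(\ln b)$ times in expectation during $[0,T]$, and such an update can break the same-type invariant if the root changes to a color other than $c_2$ in $X$ (respectively $c_1$ in $Y$). I would handle this by conditioning on the root's color trajectory: since the leaf Glauber dynamics on a star graph re-mixes in $O(b)$ steps per leaf, the leaves re-equilibrate between consecutive root updates, and the reduction via Proposition~\ref{prop:avail} can be re-applied with whatever new disagreement pair $(c_1',c_2')$ emerges after each root change.
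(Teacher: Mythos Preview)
Your approach via the second moment method is a genuinely different route from the paper's. The paper never computes moments of a counting variable; instead it constructs an auxiliary Glauber process $(Z_t)$ on $b$ isolated vertices using only the $k-2$ colors in $\C_Z=[k]\setminus\{c_1,c_2\}$, couples it to $(X_t,Y_t)$ so that $A_{Z_t}\subseteq A_{X_t}(r)\cap A_{Y_t}(r)$ for all $t\le T_1^{XY}$, and then analyzes $Z$ in a \emph{single shot}: after $2(b+1)\ln b$ steps every $v_i$ has been recolored once (w.h.p.), so the $Z$-colors are i.i.d.\ uniform on $\C_Z$, and at the first subsequent root hit $t_z$ (which is ${}<4(b+1)\ln b$ w.h.p.) one has $\Pr[A_{Z_{t_z}}\neq\emptyset]\ge 1/(3(1+\eps)b^\eps\ln b)$ by a direct inclusion--exclusion calculation. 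The domination $T_1^Z\ge T_1^{XY}$ then gives the lemma immediately. The whole point of the $Z$-coupling is precisely to sidestep the obstacle you flag: $Z$ is oblivious to what the root does, so root recolorings and the resulting changes in disagreement type never enter the analysis.

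Your proposal, by contrast, has to confront this obstacle head-on, and the treatment is where the gap lies. First, your added hypothesis $x_0(r)=c_2,\,y_0(r)=c_1$ is not in the lemma statement; while it does arise from the warm-up stage, the lemma is stated for arbitrary root colors, and without it even the first leaf update can create a disagreement outside $\{c_1,c_2\}$. Second, and more seriously, your first-moment estimate for $\Pr[\text{CA at }t{-}1]$ for $t\in[T/2,T]$ assumes each leaf color is approximately uniform on $\C\setminus\{c_2\}$, but the root is recolored $\Theta(\ln b)$ times in this window, and between consecutive root updates there are only $\Theta(b)$ leaf updates---not enough for the leaves to ``re-equilibrate'' (you need $\Theta(b\ln b)$ for coupon-collector). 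So the conditioning-on-root-trajectory fix you sketch does not close the gap as stated. Third, a minor point: the factorization $\Exp{I_sI_t}\le (b+1)^{-2}\Pr[\text{CA at }t{-}1]$ is not quite right, since the root-selection event at step $s$ influences the state at time $t-1$; one can instead bound $\Exp{I_sI_t}\le (b+1)^{-1}\Exp{I_s}=(b+1)^{-2}\Pr[\text{CA at }s{-}1]$, which still yields the same order provided the first-moment estimate is sound---so this is fixable, but the first-moment issue is the real one. The paper's auxiliary-process coupling is exactly the device that makes the first-moment computation clean.
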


\begin{pf}
First of all, by Proposition~\ref{prop:avail}, $|A_{X_0}(r)\oplus
A_{Y_0}(r)| \le2$. We are interested in the time $t$ when there is a
common color available for the root in $(X_t,Y_t)$.

Let $(Z_t)$ be a Glauber process on the graph $G_Z$ of $b+1$ isolated
vertices $\{v_0,v_1,v_2,\ldots,v_b\}$ in which $v_0$ corresponds to the
root and $v_i$ corresponds to the leaves $\ell_i$ for any $i > 0$. The
color set used in the process $(Z_t)$ is $\C_Z = [k]\setminus\{
{c_1},\cb
\}$. In each step, $(Z_t)$ chooses a random vertex and recolors it with
a random color from the set $\C_Z$. Let $T_Z$ be the stopping time on
$Z$, satisfying
\[
T_1^{Z} = \min\{ t > 2(b+1)\ln b\dvtx |A_{Z_t}| \ge1 \mbox{ and
$v_0$ is
selected at the step $t$}\},
\]
where $A_{Z_t} = \{c\in\C_Z\dvtx \forall i\in[1,\ldots ,b], Z_t(v_i)
\neq c\}$
is the set of unused colors in the vertices $\{v_1,v_2,\ldots , v_b\}$.
We want to couple $(Z_t)$ with $(X_t,Y_t)$ in such a way that $T_1^{Z}
\ge T_1^{XY}$ for all the runs, and then if we show that for any
initial state $z_0$, we have
%
%
\begin{equation}
\label{eqn:tz}
\Probability[{T_1^{Z} < 4(b+1)\ln b} \mid{Z_0=z_0} ] > \frac
{1}{4(1+\eps)b^\eps
\ln b}.
\end{equation}
Then by the coupling, we know that the lemma is also true.

Now we are going to construct the coupling between $(Z_t)$ and
$(X_t,Y_t)$ for $t \le T_1^{XY}$. Let $z_0$ be the initial state
satisfying that for any $i\in[1,\ldots ,b]$, if $x_0(\ell_i) = y_0 (\ell_i)
\in\C_Z$ then $z_0 (v_i) = x_0 (\ell_i)$, otherwise we give an
arbitrary color to the vertex $v_i$. On each step $t$, we first
randomly select a vertex in $G^*$ to update in $(X_t,Y_t)$, and
accordingly, we select the corresponding vertex in $G_Z$ to update in $Z_t$:
\begin{itemize}
\item If the vertex is a leaf $\ell_i$,
$(X_t,Y_t)$ selects a random color $c$ or a disagreement to update. If
$c \in\C_Z$, then we give the same color to $v_i$ in $Z_t$; otherwise
we give a random color to $v_i$.

\item If the vertex is the root $r$,
recolor the root on $(X_t, Y_t)$ according to the maximal one-step
coupling and pick a random color in $\C_Z$ to recolor $v_0$
in~$Z$.\looseness=-1
\end{itemize}

Observe that $A_{Z_t} \subseteq A_{X_t}(r) \cap A_{Y_t}(r)$ for any
$0 \le t \le T_1^{XY}$, which implies that $T_1^{Z} \ge T_1^{XY}$ holds
with probability $1$. Now we will show that \eqref{eqn:tz} holds.
Let $\E$ be the event that, in $(Z_t)$, every vertex in the graph $G_Z$
will be recolored at least once within the first $2(b+1)\ln b$ steps.
Let $t_z$ be the first time after time $2(b+1)\ln b$ when the dynamics
$(Z_t)$ recolors the root.
For each color $c \in\C_Z$, define the indicator function $\mathbf
{1}_c := \mathbf{1}\{c \neq Z_{t_z} (v_i), \forall1 \le i \le b \}$.
These indicator functions are negatively associated to each other
(cf. Theorem 14 in~\cite{DR}). It follows by elementary calculation
that, conditioned on $t_z=t$ for some $t>2(b+1)\ln b$ and for large
enough $b$,
we have
%
%
\begin{eqnarray}\label{eq:NA-bound}
&&
\Probability[{A_{Z_t} \neq\varnothing} \mid{t_z = t} ]
\nonumber\\
&&\qquad\ge
\Probability[{\E}] \cdot\Probability[{A_{Z_t} \neq\varnothing}
\mid{t_z = t,\E} ]\nonumber\\
&&\qquad\ge
0.99\Probability[{A_{Z_t} \neq\varnothing} \mid{t_z = t,\E} ]
\qquad\mbox{(since
$\Probability[{\E}] > 1-1/b^2$)}
\nonumber
\\[-8pt]
\\[-8pt]
\nonumber
&&\qquad\ge
0.99 \biggl(1 - \prod_{c\in\C_z}\Probability[{\mathbf{1}_c = 0} \mid
{t_z = t,\E } ]\biggr) \qquad\mbox{(negative association)}\\
&&\qquad\ge
0.99\biggl(1 - \biggl(1 - \biggl(1-\frac{1}{|\C_Z|}\biggr)^b
\biggr)^{|\C
_Z|}\biggr)\nonumber
\\
&&\qquad \ge \frac{1}{3(1+\eps)b^\eps\ln b}.\nonumber
\end{eqnarray}
Since $\Probability[{t_z \le4(b+1)\ln b}]> 1-1/b^2$, by applying
\eqref
{eq:NA-bound}, we have
\begin{eqnarray*}
\Probability[{T_1^{Z} < 4(b+1)\ln b} \mid{Z_0=z_0} ]
&\ge&\sum_{t=2(b+1)\ln b}^{4(b+1)\ln b} \Probability[{A_{Z_t} \neq
\varnothing } \mid{t_z = t} ] \cdot\Probability[{t_z = t}]\\
&\ge&\frac{\Probability[{t_z \le4(b+1)\ln b}]}{3(1+\eps)b^\eps
\ln b}\\
&\ge&\frac{1}{4(1+\eps)b^\eps\ln b}.
\end{eqnarray*}
This completes the proof of Lemma~\ref{lem:common-avail}.
\end{pf}

We also know that when the root is recolored, if $|A_X(r)\oplus A_Y(r)|
\le2$ and $|A_X(r)\cap A_Y(r)|\ge1$ holds, then the probability
that the root will be recolored to the same color in both $X$ and $Y$
is at least $1/2$.
Hence, at time $T_1 = T_1^{XY}$, with probability at least $1/2$,
the root will become an agreement. Combining with Lemma \ref
{lem:warmup}, we prove that with probability at least $1/O((1+\eps
)b^\eps\ln b)$ when $\eps> 0$, starting from arbitrary initial states
$(x_0,y_0)$, the root will couple in at most $12(b+1)\ln b$ steps and
by that time all the disagreements (if there is any) in the leaves are
of the same type. When $\eps= 0$, combining with Lemma \ref
{lem:warmup0}, we get that the probability of the same event happening
is at least $1/O(\ln^3 b)$.

The last step is to let all of the disagreements in the leaves go away
without changing the root to a disagreement, again with constant probability,
after $\Tlev= 4 (b+1)\ln{b}$ more steps. Here is the precise statement
of the lemma.

\begin{lemma}
\label{lem:leafcoupling}
For $\eps\geq0$, consider
a pair of initial states $(x_0,y_0)$ where
the root $r$ agrees [i.e., $x_0(r) = y_0(r)]$
and all of the leaf disagreements are of the same type
[i.e., there is a pair of colors ${c_1},\cb$ such that
for all $\ell\in D^L_0$, we have $x_0(\ell) = {c_1}$ and $y_0(\ell)
= \cb$].
Then, with probability at least $1/2$ after $\Tlev= 4(b+1)\ln b$
steps, we have $X_{\Tlev} = Y_{\Tlev}$.
\end{lemma}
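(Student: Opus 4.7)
The plan is to analyze the maximal one-step coupling $(X_t, Y_t)$ for $\Tlev=4(b+1)\ln b$ steps and combine two good events: $\mathcal{A}$ that every leaf is selected for recoloring at least once during $[0,\Tlev]$, and $\mathcal{B}$ that the root agrees at every time $t\leq \Tlev$. On $\mathcal{A}\cap\mathcal{B}$ the chains must coalesce: whenever the root agrees we have $\Avail{X_t}{\ell}=\Avail{Y_t}{\ell}=\C\setminus\{c_0\}$ for each leaf $\ell$ (where $c_0=X_t(r)=Y_t(r)$), so under the maximal coupling any recolored leaf receives the same color in both chains and remains an agreement thereafter; combined with $\mathcal{A}$, every leaf agrees at time $\Tlev$, and by $\mathcal{B}$ so does the root. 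A standard coupon-collector/Chernoff argument as in Proposition \ref{prop:roothit} gives $\Prob{\mathcal{A}}\geq 1-1/b^3$, so the task reduces to proving $\Prob{\mathcal{B}}\geq 1/2+o(1)$.

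To bound $\Prob{\mathcal{B}^c}$, let $\tau$ be the first step at which the root disagrees after a recoloring (set $\tau=\Tlev+1$ if this never happens). I would first establish the structural invariant ``the root agrees and every leaf disagreement is of the same type $(\ca,\cb)$'', which is preserved for all $t\leq\tau$: a leaf recoloring while the root agrees produces the same color in both chains under the maximal coupling, so leaf disagreements only vanish and never change type. Under this invariant, a short case analysis on whether $\ca$ and/or $\cb$ belong to $S_{\mathrm{agree},t}$, the set of colors used by the agreement leaves at time $t$, yields $\Avail{X_t}{r}\oplus\Avail{Y_t}{r}\subseteq\{\ca,\cb\}$ and shows that the probability the root becomes a disagreement at a root-recoloring step is $0$ when both $\ca,\cb\in S_{\mathrm{agree},t}$, and is at most $\tfrac12$ otherwise (since $|\Avail{X_t}{r}|\geq 2$ in the non-trivial cases).

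The main technical step is then to bound $\Prob{\ca\notin S_{\mathrm{agree},t-1},\,\tau>t-1}$, and symmetrically for $\cb$. The key observation is that, conditional on $\tau>t-1$, every leaf selected at least once during $[0,t-1]$ is an agreement leaf whose current color is uniform on $\C\setminus\{c_0\}$ and independent of the other leaves' colors. This conditional-independence claim holds because the leaf-recoloring color choices form an independent source of randomness from both the vertex-selection process and the root-recoloring color choices, while the event $\{\tau>t-1\}$ is measurable only with respect to the latter two. Letting $R_{t-1}$ denote the number of distinct leaves selected in $[0,t-1]$, this gives
\[
\Prob{\ca\notin S_{\mathrm{agree},t-1},\,\tau>t-1}\;\leq\; \Exp{\bigl(1-1/(k-1)\bigr)^{R_{t-1}}};
\]
combining with the case analysis (the $\tfrac12$ from the disagreement bound and the union-bound factor of $2$ over $\{\ca,\cb\}$ cancel) yields $\Prob{\tau=t}\leq \tfrac{1}{b+1}\Exp{(1-1/(k-1))^{R_{t-1}}}$.

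Summing over $t\in[1,\Tlev]$, I would use the negative association of the leaf-selection indicators (equivalently, coupling with an auxiliary Glauber process on $b$ isolated vertices, in the spirit of Lemmas \ref{lem:warmup} and \ref{lem:common-avail}) to obtain $\Exp{(1-1/(k-1))^{R_{t-1}}}\leq (1-\alpha_{t-1}/(k-1))^b\leq \exp(-b\alpha_{t-1}/(k-1))$ with $\alpha_{t-1}=1-(1-1/(b+1))^{t-1}$. For $k=b/((1+\eps)\ln b)$ with $\eps\geq 0$, a direct geometric-series calculation gives $\sum_{t=1}^{\Tlev}\Exp{(1-1/(k-1))^{R_{t-1}}}=O((b+1)/\ln b)$, whence $\Prob{\tau\leq\Tlev}=O(1/\ln b)\leq 1/4$ for $b$ sufficiently large, and altogether $\Prob{X_{\Tlev}=Y_{\Tlev}}\geq 1-1/b^3-1/4\geq 1/2$. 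The main obstacle will be making the conditional-independence argument fully rigorous: one must carefully factor the randomness of the coupled chain into the three sources above and verify that $\{\tau>t-1\}$ is indeed independent of the leaf-recoloring color choices conditional on the vertex-selection history.
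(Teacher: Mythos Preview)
Your high-level decomposition matches the paper's exactly: bound the probability that every leaf is hit (your event $\mathcal{A}$) and separately that the root never becomes a disagreement (your event $\mathcal{B}$), then union-bound. The structural invariant you isolate---that while the root agrees all leaf disagreements remain of type $(\ca,\cb)$, and that $A_{X_t}(r)\oplus A_{Y_t}(r)\subseteq\{\ca,\cb\}$ with $c_0\in A_{X_t}(r)\cap A_{Y_t}(r)$ giving a disagreement probability of at most~$\tfrac12$---is correct and is implicitly used in the paper.

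The gap is precisely where you flag it: the claim that $\{\tau>t-1\}$ is measurable with respect to the vertex-selection sequence and the root-recoloring coins alone is false. At each root step $s<t$, whether the root becomes a disagreement depends on whether $\ca$ or $\cb$ lies in $A_{X_{s-1}}(r)\oplus A_{Y_{s-1}}(r)$, which in turn is determined by whether $\ca,\cb\in S_{\text{agree},s-1}$---and that is a function of the leaf-recoloring color choices. So conditioning on $\{\tau>t-1\}$ genuinely biases the leaf-color distribution (in fact toward the helpful direction, but you have not quantified or exploited that monotonicity). A second, related issue is that the root colour $c_0$ can change during $[0,t-1]$, and its new value depends on $A_{X}(r)$, hence on past leaf colours; so the ``uniform on $\C\setminus\{c_0\}$ and independent'' description of recoloured leaves does not hold even before conditioning on~$\tau$.

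The paper avoids this entirely by a stochastic-domination device. Instead of analysing $\Prob{\tau=t}$ directly, it observes that a root disagreement requires $X_t(r)=\cb$ or $Y_t(r)=\ca$, and tracks the set $X_t^{-1}(\cb)$ via an auxiliary subset process $(S_t)$ on $V(G^*)$: when a leaf $v$ is selected, put $v\in S_{t+1}$ with probability $1/(k-1)$ independently, and when the root is selected, put $r\in S_{t+1}$ iff $S_t=\emptyset$. A short case-check gives a coupling with $X_t^{-1}(\cb)\subseteq S_t$, so the first time $X_t(r)=\cb$ dominates the first time $r\in S_t$. The point is that $(S_t)$ is a Markov process whose leaf bits are honest i.i.d.\ Bernoulli$(1/(k-1))$ variables, so the bound $\Prob{S_{t-1}\cap L=\emptyset}\le(1-1/(k-1))^{0.01b}$ at each of the $\le 20\ln b$ root updates is immediate, with no conditioning on the coupled chain. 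Your estimate can be repaired along these lines: rather than conditioning on $\{\tau>t-1\}$, dominate the event ``no agreement leaf has colour $\ca$ at time $t-1$'' by an event about an auxiliary independent process, as the paper does.
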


\begin{pf}
First, observe that with high probability after $\Tlev$ steps, all of
the leaves will be recolored at least once.\vadjust{\goodbreak}
Assuming all of the leaves are recolored at least once,
if the root does not become a disagreement within these $\Tlev$ steps,
then all of the leaves will be agreements.
Therefore, we just need to show that the root will not change to a
disagreement in $\Tlev$ steps with probability at least $3/5$. This is
done by a coupling argument.

Let $t_2$ be the first time when the root becomes a disagreement, that
is, $X_{t_2}(r) \neq Y_{t_2}(r)$.
Note, since any disagreements on the
leaves are colored $c_1$ in $X_0$ and $c_2$ in $Y_0$,
either $X_{t_2}(r) = \cb$ and/or $Y_{t_2}(r) = {c_1}$. Therefore, we
define the stopping times $T^X_2$ and $T^Y_2$ as follows:
\[
T^X_2 = \min\{t\dvtx X_t(r) = \cb\},\qquad T^Y_2 = \min\{t\dvtx Y_t(r) =
{c_1}\}.
\]
%
We can assume without loss of generality that $X_0(r)$ [and hence $Y_0(r)$]
does not equal either ${c_1}$ or $\cb$.
Otherwise, by the hypothesis of the lemma, there are no disagreements
in the leaves, and hence $X_0 = Y_0$. Hence, our goal is to show that
\[
\Probability[{T^X_2 \le\Tlev\mbox{ or } T^Y_2 \le\Tlev}] <
\tfrac{2}{5}.
\]
And the main step is to show that
%
%
\begin{equation}
\label{eqn:c1}
\Probability[{T^X_2 \le\Tlev}] < \tfrac{1}{5}.
\end{equation}

Let $(S_t)$ be a random subset process on $V(G^*)$. Each time it picks
a vertex~$v$:
\begin{itemize}
\item if $v \neq r$, with probability $1/(k-1)$, $S_{t+1} = S_t \cup\{
v\}$ and with probability $1- 1/(k-1)$, $S_{t+1} = S_t \setminus\{v\}$;
\item if $v = r$, if $S_t = \varnothing$, then $S_{t+1} = \{r\}$,
otherwise $S_{t+1} = S_t$.
\end{itemize}

Let us define $
T^S = \min_t \{t\dvtx r \in S_t\}
$.
We are going to couple $(S_t)$ with $(X_t)$ such that $\{v \in V(G^*)
\dvtx
X_t(v) = \cb\} \subseteq S_t$. This implies
$T^S \le T^X_2$. And if we can show that
$
\Probability[{T^S \le\Tlev}] \le1/5,
$
then we have proved inequality \eqref{eqn:c1}.

The coupling $(X_t,S_t)$ is defined as follows. We start with $S_0 =
X_0^{-1}(\cb)$, the set of vertices of color $\cb$ in the initial coloring.
Each time both processes picks the same vertex $v$ to update.
\begin{itemize}
\item If $v = r$, $X_t$ and $S_t$ act independently at this time.

\item If $v \neq r$ and $X_t(r) \neq\cb$, then $X_t$ chooses a random
color different from the root to recolor $v$, and if that color is not
$\cb$, $S_{t+1} = S_{t} \setminus\{v\}$ otherwise $S_{t+1} = S_{t}
\cup\{v\}$.

\item If $v \neq r$ and $X_t(r) = \cb$, then $X_t$ chooses a random
color different from $\cb$ to recolor $v$, and if that color is not ${c_1}
$, $S_{t+1} = S_{t} \setminus\{v\}$, otherwise $S_{t+1} = S_{t} \cup\{
v\}$.
\end{itemize}
It is easy to see that this is a valid coupling. More importantly, it
satisfies $X_t^{-1}(\cb) \subseteq S_t$.

Now we are going to show that $\Probability[{T^S \le\Tlev}] < 1/5$
holds. It is
not hard to show that with probability at least $0.9$,
the first time when the root is updated is later than $0.1b$ steps. We
now condition on this event. The indicators of whether each leaf is in
$S_t$ or not during those $0.1b$ steps are negatively associated (cf.
Theorem~14 in~\cite{DR}). Then by using the Chernoff bound with
negative association among the random variables (cf. Proposition 7 in
\cite{DR}), it can be shown that with high probability at least $\geq
0.01b$ many different leaves are recolored before the first time we
recolor the root. Thus, together with the proof of Proposition \ref
{prop:roothit}, 
we can claim that with probability at least $0.85$, before the first
$t$ such that $r \in S_t$, at least $0.01b$ many leaves have been
recolored, and root will be recolored at most $20\ln b$ times
before~$\Tlev$. Denote this event as $\E$. We have
\[
\Probability[{T^S \le\Tlev}] \le\Probability[{T^S \le\Tlev}
\mid{\E} ] + \Probability[{\bar{\E}}]
\le\Probability[{T^S \le\Tlev} \mid{\E} ] + 0.15.
\]
In fact $\Probability[{T^S \le\Tlev} \mid{\E} ]$ can be
arbitrarily small when
$b$ grows, since at each time~$t$ we update the root in $(S_t)$, we
know that the probability of $S_{t-1} = \varnothing$ is at most
$b^{-0.01(1+\eps)}$,
and we know that the root updates at most $20\ln b$ times.

In conclusion, we proved inequality \eqref{eqn:c1} and hence the lemma.
\end{pf}
Finally, by combining Lemmas~\ref{lem:warmup},~\ref{lem:common-avail}
and~\ref{lem:leafcoupling} together, we can conclude that: when $\eps>
0$, with probability at least $1/(20(1+\eps)b^\eps\ln b)$ after $t =
\twarm+ T_1 +  \Tlev< T$ steps of the coupling, we have $X_t = Y_t$;
when $\eps= 0$,
from Lemmas~\ref{lem:warmup0},~\ref{lem:common-avail} and~\ref
{lem:leafcoupling},
we have that
with probability at least $1/(20 \ln^3 b)$ after $t = \twarmp+ T_1 +
\Tlev< T$ steps of the coupling, we have $X_t = Y_t$, which proves
Lemma~\ref{lem:upper-below}.

\section{\texorpdfstring{Upper bound on mixing time for $C>1$: Proof of Theorem \protect\ref{thm:star-upper-above}}
{Upper bound on mixing time for $C>1$: Proof of Theorem 5}}
\label{upabove}
In this section we analyze the upper bound of the mixing time of the
Glauber dynamics
on the star graph $G^*$ when $k = Cb/\ln{b}$ for $C>1$. Here, let
\[
\epsu:= C - 1,
\]
and hence, $k = (1+\epsu)b/\ln b$.

We will analyze the maximal one-step coupling using a weighted
Hamming distance. The root $r$ will have weight $w(r)=b^{\epsu/2} > 1$
and the leaves will have weight $w(v)=1$. For a set of
vertices $S$, let $w(S) = \sum_{v\in S} w(v)$.
Let $D^r_t$ denote whether there is a disagreement at the root.

We want to show that the coupling decreases the distance
in expectation.
Hence, we say a pair of colorings $(X_0,Y_0)$ are $\eta$-distance-decreasing
if there exists a coupling $(X_0,Y_0)\rightarrow(X_1,Y_1)$
such that
\[
{\Expectation[{w(D_{1})} \mid{X_{0},Y_{0} } ]}<(1-\eta)w(D_{0}).
\]
To simplify the analysis of the
coupling, we will use the following theorem
of Hayes and Vigoda~\cite{HV} to utilize properties of the stationary
distribution.
The quantity $\operatorname{diam}(\Omega)$ is the diameter of $\Omega
$ with respect
to the Glauber dynamics. In our case, a trivial bound is
$\operatorname{diam}(\Omega)\le2b$.

\begin{theorem}[{(\cite{HV}, Theorem 1.2)}]
\label{thm:stationarity}
Let $\eta>0$. Suppose $S\subseteq\Omega$ such that every
$(X_0,Y_0)\in S\times\Omega$ is $\eta$-distance-decreasing, and
\[
\pi(S)\ge1-\frac{\eta}{16 \operatorname{diam}(\Omega)},
\]
then the mixing time is
\[
\Tmix\le
3\eta^{-1}\lceil\ln(32 \operatorname{diam}(\Omega))\rceil.
\]
\end{theorem}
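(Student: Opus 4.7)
The plan is to use a path-coupling/stationarity argument: if one of the two coupled chains is started from $\pi$, then by stationarity the contractive coupling is available at essentially every step. Specifically, couple $(X_t, Y_t)$ with $X_0 \sim \pi$ (so $X_t \sim \pi$ for every $t$) and $Y_0 = y$ arbitrary. At each step, conditional on $(X_t, Y_t)$: if $X_t \in S$, use the $\eta$-distance-decreasing coupling guaranteed by the hypothesis applied to the pair $(X_t, Y_t) \in S \times \Omega$; if $X_t \notin S$, use any valid one-step coupling. Assuming vertex weights are at least $1$ (which is the setting in which this theorem is applied), $w(D_t) \ge 1$ whenever $X_t \neq Y_t$, so by the Coupling Lemma and Markov's inequality $\|P^t(y,\cdot) - \pi\|_{TV} \le \Prob{X_t \neq Y_t} \le \Exp{w(D_t)}$.

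The next step is a one-step recursion for $d_t := \Exp{w(D_t)}$. Let $p := 1 - \pi(S) \le \eta / (16\,\text{diam}(\Omega))$; by stationarity this is exactly $\Prob{X_t \notin S}$ at every step. On $\{X_t \in S\}$, the hypothesis gives $\ExpCond{w(D_{t+1})}{X_t, Y_t} \le (1-\eta)\, w(D_t)$; on $\{X_t \notin S\}$, use the worst-case bound $w(D_{t+1}) \le \text{diam}(\Omega)$. Taking expectations over $X_t$ yields
\[
d_{t+1} \;\le\; (1-\eta)\, d_t + p \cdot \text{diam}(\Omega) \;\le\; (1-\eta)\, d_t + \frac{\eta}{16}.
\]
Iterating from $d_0 \le \text{diam}(\Omega)$ gives $d_T \le \eee^{-\eta T}\, \text{diam}(\Omega) + 1/16$. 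Choosing $T = 3\lceil \ln(32\,\text{diam}(\Omega)) \rceil / \eta$ makes the first term at most $(32\,\text{diam}(\Omega))^{-3}\cdot \text{diam}(\Omega) \ll 1/16$, so $d_T < 1/(2\eee)$, and hence $\Tmix \le T$ by the Coupling Lemma.

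The main technical subtlety is handling the bad case $X_t \notin S$: there we have no contraction, only the trivial deterministic bound $w(D_{t+1}) \le \text{diam}(\Omega)$. It is precisely the calibration $\pi(S) \ge 1 - \eta/(16\,\text{diam}(\Omega))$ in the hypothesis that makes the additive error $p\cdot \text{diam}(\Omega)$ per step small enough (namely $\eta/16$) to be absorbed into the geometric contraction, so the stationary ``error'' contributes only an $O(1)$ additive term in the limit. Everything else is routine coupling-and-Markov bookkeeping.
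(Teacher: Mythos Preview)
The paper does not prove Theorem~\ref{thm:stationarity}; it is quoted verbatim as Theorem~1.2 of Hayes--Vigoda \cite{HV} and used as a black box in Section~\ref{upabove}. So there is no ``paper's own proof'' to compare against.

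That said, your sketch is essentially the Hayes--Vigoda argument: start one chain in stationarity so that at every step $\Prob{X_t\notin S}=1-\pi(S)\le \eta/(16\,\mathrm{diam}(\Omega))$, contract on the good event, absorb the rare bad event as an additive $\eta/16$, and iterate. The recursion $d_{t+1}\le (1-\eta)d_t+\eta/16$ and its solution $d_T\le \eee^{-\eta T}\,\mathrm{diam}(\Omega)+1/16$ are correct, and your choice of $T$ (in fact already $T=\lceil\ln(32\,\mathrm{diam}(\Omega))\rceil/\eta$) makes $d_T<1/(2\eee)$.

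Two small points worth tightening. First, you silently use $w(D_t)\le \mathrm{diam}(\Omega)$ and $d_0\le \mathrm{diam}(\Omega)$; this requires that $\mathrm{diam}(\Omega)$ be the diameter in the weighted metric $w$ (which is how \cite{HV} sets things up), not merely the Glauber graph distance. The paper's remark that $\mathrm{diam}(\Omega)\le 2b$ is consistent with this in the application, but the general statement needs the metric and the diameter to match. Second, the step $\Prob{X_t\neq Y_t}\le \Exp{w(D_t)}$ uses $\min_v w(v)\ge 1$; you flag this, and it holds in the paper's application ($w(r)=b^{\eps/2}>1$, leaves have weight $1$), but it is an extra hypothesis not visible in the theorem as stated.
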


We use $S$ as the set of colorings where the root has many available
colors. 
Along the lines of the
Dyer--Frieze~\cite{DyerFrieze} local uniformity results,
we will prove the following statement about the available colors for
the root $r$
in a random coloring.
\begin{lemma}\label{lem:avail-above}
Let $X$ be a random coloring of the star graph on $b$ vertices.
For every $\epsu>0$,
there exists $b_{0}$, such that for all $b>b_{0}$ and $k = (1+\epsu
)b/\ln b$,
\[
\Probability[{|{{A_{X}(r)}}| > b^{0.9\epsu}}]>1-\exp(-b^{0.99\epsu}/10).
\]
\end{lemma}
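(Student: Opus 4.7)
The plan is to reduce the statement to a standard occupancy concentration argument. On the star graph, a uniformly random proper coloring has the transparent structure that $X(r)$ is uniform on $[k]$ and, conditional on $X(r) = c_0$, the $b$ leaf colors are i.i.d.\ uniform on $[k]\setminus\{c_0\}$. Under this conditioning, $c_0$ itself is always in $\Avail{X}{r}$, and $|\Avail{X}{r}|-1$ counts the number of colors in $[k]\setminus\{c_0\}$ that receive no ``ball'' when $b$ balls (the leaves) are thrown independently and uniformly into $k-1$ bins (the non-root colors).

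For each $c\neq c_0$ let $Z_c$ be the indicator that color $c$ is unused by any leaf, so $|\Avail{X}{r}| = 1 + \sum_{c\neq c_0} Z_c$. Then $\Exp{Z_c} = (1 - 1/(k-1))^b$, and plugging in $k = (1+\eps)b/\ln b$ and expanding gives $(1 - 1/(k-1))^b = b^{-1/(1+\eps)}(1+o(1))$. Summing,
\[
\mu \;:=\; \Exp{\textstyle\sum_{c\neq c_0} Z_c} \;=\; (k-1)\bigl(1 - 1/(k-1)\bigr)^b \;=\; \frac{(1+\eps)+o(1)}{\ln b}\cdot b^{\eps/(1+\eps)}.
\]
Since $\eps/(1+\eps) > 0.9\eps$ for $\eps$ sufficiently small, the polynomial gain over $b^{0.9\eps}$ dominates the $1/\ln b$ factor, and we obtain $\mu \ge 2\,b^{0.9\eps}$ for all $b > b_0$.

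The concentration step exploits negative association. The indicators $\{Z_c\}_{c\neq c_0}$ arise from the classical balls-into-bins experiment and are therefore negatively associated (Theorem 14 of \cite{DR}, as already used elsewhere in the paper). Applying the Chernoff-type bound for negatively associated random variables (Proposition 7 of \cite{DR}) to $\sum_c Z_c$ with deviation parameter $1/2$ yields
\[
\Prob{\textstyle\sum_{c\neq c_0} Z_c \,<\, \mu/2} \;\le\; \exp(-\mu/8),
\]
and combined with $\mu \ge 2\,b^{0.9\eps}$ this gives $\Prob{|\Avail{X}{r}| \le b^{0.9\eps}} \le \exp(-b^{0.9\eps}/4)$. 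Because the bound is uniform in $c_0$, it also holds unconditionally, and $1/4 > 1/100$ gives the claimed tail.

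The only real obstacle is the expectation bookkeeping: one has to carry enough terms in the expansion of $(1-1/(k-1))^b$ to verify that $\mu \ge 2\,b^{0.9\eps}$ in the relevant range of $\eps$, which boils down to the inequality $\eps/(1+\eps) > 0.9\eps$. Once that is in hand, the rest is a direct application of off-the-shelf negative-association concentration for occupancy indicators.
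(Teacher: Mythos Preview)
Your argument is essentially identical to the paper's: condition on the root color, realize the leaf colors as i.i.d.\ uniform on $[k]\setminus\{c_0\}$, write $|\Avail{X}{r}|$ as a sum of empty-bin indicators, invoke negative association of these indicators (Theorem~14 of \cite{DR}) and finish with a Chernoff lower-tail bound. Your bookkeeping is in fact slightly cleaner than the paper's---you track the extra $+1$ from the root's own color and you establish $\mu\ge 2b^{0.9\eps}$ (rather than just $\mu\ge b^{0.9\eps}$), which is what is actually needed to make the lower-tail bound nontrivial.

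Your remark that the step $\eps/(1+\eps)>0.9\eps$ requires $\eps<1/9$ is correct and worth flagging: the paper's own proof tacitly uses the same inequality in the line ``$\ge b^{0.9\epsilon}$, for $b$ sufficiently large,'' so this restriction is shared rather than a defect of your argument. Indeed the lemma as stated cannot hold for all $\eps>0$ (for $\eps>10/9$ one has $b^{0.9\eps}>k\ge|\Avail{X}{r}|$, so the event has probability zero); the bound $b^{0.9\eps}$ is just a convenient choice that suffices for the coupling analysis in Section~\ref{upabove}, where only $|\Avail{X}{r}|\ge b^{5\eps/6}$ is actually used.
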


Hence, we let the set $S$ be those colorings $X\in\Omega$ where
$|{{A_{X}(r)}}|\ge b^{0.9\epsu}$.

\subsection{Analyzing the coupling}

We need to analyze ${\Expectation[{w(D_{1})} \mid{X_{0},Y_{0}} ]} $.
Note, when a leaf $v$ is recolored,
if the root is a disagreement [i.e., $X_0(r)\neq Y_0(r)$], then
with probability $1/(k-1)$ we have $X_{1}(v)\neq Y_{1}(v)$.
Hence,
\begin{eqnarray*}
&&{\Expectation[{w(D^L_{1})} \mid{X_{0},Y_{0}} ]}\\
&&\qquad=\sum_{v\in V\setminus\{r\}} w(v) \bigl[\Probability[{v \mbox{ is
recolored}}]\\
&&\hspace*{56pt}\qquad\quad{}\cdot\Probability[{X_1(v)\neq Y_1(v)} \mid{v \mbox{
is recolored, } X_0,Y_0} ]\\
&&\hspace*{56pt}\qquad\quad{} + (1 - \Probability[{v \mbox{ is recolored}}]){\mathbf{1}
[{X_0(v)\neq Y_0(v)}]}\bigr]\\
&&\qquad= \frac{b}{b+1} \frac{{\mathbf{1} [{r\in D_0}]}}{k-1} + \biggl(1-\frac
{1}{b+1}\biggr)w(D^L_0).
\end{eqnarray*}
There is probability at most $|D^L_0|/\max\{|A_{X_0}(r)|,|A_{Y_0}(r)|\}
$ that
$X_{1}(r)\neq Y_{1}(r)$, when the root $r$ is recolored.
Hence, for $X_0\in S$, we have
\begin{eqnarray*}
&&{\Expectation[{w(D_{1}^r)} \mid{X_{0},Y_{0}} ]}\\
&&\qquad\leq w(r)\frac{1}{b+1}\frac{|D^L_0|}{\max\{
|A_{X_0}(r)|,|A_{Y_0}(r)|\}}
+ \biggl(1-\frac{1}{b+1}\biggr)w(D_0^r)\\
&&\qquad\leq\frac{|D^L_0|b^{-\epsu/3}}{b+1}
+ \biggl(1-\frac{1}{b+1}\biggr)w(D_0^r).
\end{eqnarray*}

Therefore, for $(X_0,Y_0)\in S\times\Omega$, we have
\begin{eqnarray*}
&&{\Expectation[{w(D_{1})} \mid{X_{0},Y_{0}} ]}\\
&&\qquad \leq
\frac{1}{b+1}\biggl( {\mathbf{1} [{r\in D_{0}}]}\frac{b}{k-1}+b^{-\epsu
/3}|D_{0}^{L}|\biggr)
+\biggl(1-\frac{1}{b+1}\biggr)w(D_{0}) \\
&&\qquad \leq
w(D_0) + \frac{1}{b+1}\bigl( -w(D_0) + {\mathbf{1} [{r\in
D_{0}}]}w(r)b^{-\epsu
/3}+b^{-\epsu/3}|D_{0}^{L}|\bigr)\\
&&\qquad \leq
w(D_0) + \frac{1}{b}( -1 + b^{-\epsu/4})w(D_0).
\end{eqnarray*}
Thus, they are $\eta$-distance-decreasing for $\eta=(1-b^{-\epsu/4})/b$.

Now applying Theorem~\ref{thm:stationarity}, by Lemma~\ref{lem:avail-above}
we have the necessary bound on $\pi(S)$, and thus conclude,
for $b$ sufficiently large, we have
\[
\Tmix\le(6b\ln{b})/(1-b^{-\epsu/4}) \le12b\ln{b}.
\]

This completes the proof of Theorem~\ref{thm:star-upper-above},
except for the proof of Lemma~\ref{lem:avail-above}.

\begin{pf*}{Proof of Lemma \protect\ref{lem:avail-above}}
Fix the color of the root to be $c$.
Let $\sigma$ be a random coloring conditional on the root
receiving color $c$.
We are going to prove that
\[
\Probability[{|A_{\sigma}(r)|\le b^{0.9\epsu}} \mid{\sigma(r)=c} ]
<\exp(-b^{0.99\epsu}/10).
\]

For each color $i\in C\setminus\{c\}$, let $Z_{i}$
be the indicator function that $c\in A_{\sigma}(r)$.
$|A_{\sigma}(r)|=\sum_{i\in C}Z_{i}$.
By Theorem 14 in~\cite{DR},
the $Z_{i}$'s are negatively associated with each other once the root
is fixed.
Note that for $b$ sufficiently large,
\begin{eqnarray*}
{\Expectation[{|A_{\sigma}(r)|}]}
& \ge& k\exp\bigl(-b/({k-1})\bigr)
\\
&\ge& b^{0.99\epsu}.
\end{eqnarray*}
Now applying the Chernoff bound, which holds for negatively
associated random variables (cf. Proposition 7 in~\cite{DR}),
we have
\[
\Probability[{|A_{\sigma}(r)|\le b^{0.9\epsu}} \mid{\sigma(r)=c} ]
<\exp(-b^{0.99\epsu}/10).
\]
\upqed\end{pf*}

\section{\texorpdfstring{Proof of the lower bounds below the threshold in Theorem \protect\ref{main-theorem}}
{Proof of the lower bounds below the threshold in Theorem 1}}
\label{lbbelow}
In this section we prove that when $C < 1$,
%
%
\begin{equation}
\Trel= \Omega\bigl(n^{1/C - o(1)}\bigr).
\end{equation}
In the remainder of this section, 
let $L(T)$, or simply $L$, denote the leaves of $T$, and the root is
denoted by $r$.
For a vertex $v$ of $T$, let $T_v$ denote the subtree of $T$ rooted at
$v$, and $T^*_v$ denote $T_v \setminus\{v\}$. For convenience,
in this section, let $
\epsl:= 1/C - 1
$, and hence $k=b/(1+\epsl)\ln{b}$.

In coloring $\sigma\in\Omega(T)$, we say a vertex $v$
is \textit{frozen} in $\sigma$
if, in the subtree $T_v$, the coloring $\sigma(L(T_v))$ of the leaves
of $T_v$
forces the color for $v$. In other words, $v$ is frozen in $\sigma$ if
for all $\eta\in\Omega$ where $\eta(L(T_v))=\sigma(L(T_v))$,
we have $\eta(v)=\sigma(v)$. Note, by definition, the leaves are always
frozen. Observe that for a vertex to be frozen, its frozen children
must ``block''
all other color choices. This is formalized in the following observation
as in~\cite{GJK}.

\begin{observation}
A vertex $v$ where $h(v)>0$
is frozen in coloring $\sigma$ if and only if, for every color $c\neq
\sigma(v)$,
there is a child $w$ of $v$ where $\sigma(w)=c$ and $w$ is frozen.
\end{observation}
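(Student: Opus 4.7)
The plan is to prove both directions of the equivalence by directly unpacking the definition of ``frozen'', which says that a vertex is frozen in $\sigma$ precisely when no valid coloring agreeing with $\sigma$ on $L(T_v)$ can assign $v$ a different color.

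For the easier direction ($\Leftarrow$), I would argue contrapositively. Suppose for every color $c\neq\sigma(v)$ there is a frozen child $w$ of $v$ with $\sigma(w)=c$. Let $\eta\in\Omega$ with $\eta(L(T_v))=\sigma(L(T_v))$, and suppose for contradiction that $\eta(v)=c\neq\sigma(v)$. Apply the hypothesis to this $c$ to obtain a frozen child $w$ of $v$ with $\sigma(w)=c$. Since $w$ is frozen and $\eta$ agrees with $\sigma$ on $L(T_w)\subseteq L(T_v)$, the definition of frozen forces $\eta(w)=\sigma(w)=c$. But then $\eta(v)=\eta(w)=c$, contradicting that $\eta$ is a proper coloring of the edge $(v,w)$. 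Hence $\eta(v)=\sigma(v)$ for every such $\eta$, so $v$ is frozen.

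For the harder direction ($\Rightarrow$), I would also argue contrapositively: assuming the child condition fails, I construct a witness showing $v$ is not frozen. Suppose there exists a color $c\neq\sigma(v)$ such that every child $w$ of $v$ with $\sigma(w)=c$ is non-frozen (this includes vacuously the case of no such child). For each such child $w$, non-frozen-ness gives a coloring $\eta^{w}\in\Omega(T_w)$ with $\eta^{w}(L(T_w))=\sigma(L(T_w))$ and $\eta^{w}(w)\neq c$. Define a candidate coloring $\eta$ on $T_v$ by setting $\eta(v)=c$; on each subtree $T_w$ rooted at a child $w$ of $v$ with $\sigma(w)=c$, take $\eta=\eta^{w}$; and on each subtree $T_w$ rooted at a child $w$ with $\sigma(w)\neq c$, take $\eta=\sigma$. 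Then $\eta$ agrees with $\sigma$ on $L(T_v)$, and $\eta(v)=c\neq\sigma(v)$; verifying properness amounts to checking the edges $(v,w)$ only, which holds by construction since either $\eta(w)=\sigma(w)\neq c$ or $\eta(w)=\eta^{w}(w)\neq c$. Thus $v$ is not frozen, as required.

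The main content is really the forward direction, but even there the only subtlety is the combinatorial one of handling \emph{all} children colored $c$ simultaneously (rather than one at a time), which is dispatched by doing the surgery independently on disjoint subtrees. Everything else is definition chasing; no probabilistic or structural lemma is needed, and the observation follows from the definition of frozen together with the fact that coloring constraints only propagate along edges.
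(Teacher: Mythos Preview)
Your proof is correct; both directions are argued cleanly, and the only point worth noting is that to match the paper's definition literally (where $\eta$ ranges over $\Omega(T)$ rather than $\Omega(T_v)$) you would need to extend your constructed $\eta$ from $T_v$ to all of $T$, which is trivial since $k\ge 3$ allows a greedy extension outside $T_v$. The paper itself does not give a proof of this observation at all --- it is stated as self-evident from the definition (with a reference to \cite{GJK}) --- so your write-up is strictly more detailed than what the paper provides.
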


Using this inductional way of defining a vertex being ``frozen'' in a
coloring, we can further show the following lemma. It is a
generalization of
Lemma $8$ in~\cite{GJK}, whichis only applied to the case $\epsl\ge
1$, that is, $C \le1/2$.

\begin{lemma}
\label{lem:frozen}
For any $\epsl\in(0,1)$,
in a random coloring of tree $T$, the probability that a vertex of $T$
is not frozen is at most $b^{-\epsl}$. For the leaves in $T$, by
definition, they are always frozen.
\end{lemma}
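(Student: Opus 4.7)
The plan is to prove the bound by induction on the height of the vertex. Since the event that a vertex $v$ is frozen in a uniformly random proper $k$-coloring of $T$ depends only on the coloring of the subtree $T_v$, and since the marginal distribution of a uniformly random proper coloring of $T$ restricted to $T_v$ is itself the uniform distribution on proper colorings of $T_v$ (by the color-symmetry of the Gibbs measure on a tree), it suffices to prove the statement for a complete $b$-ary tree of height $h$ in its own right. Define $q_h$ to be the probability that the root of such a tree is frozen and aim to show $1 - q_h \le b^{-\eps}$ for every $h \ge 0$; the base case $h=0$ is trivial since leaves are always frozen.

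For the inductive step, I will exploit the Markov property of the uniform coloring on a tree: conditional on $\sigma(v) = c_0$, the subtrees $T_{w_1},\dots,T_{w_b}$ rooted at the children of $v$ are independent, each child color $\sigma(w_i)$ is uniform on $[k]\setminus\{c_0\}$, and the frozen-status indicator of $w_i$ is independent of its color (by symmetry under permuting $[k]\setminus\{c_0\}$) with marginal probability $q_{h-1}$ of being frozen. Applying the observation from the excerpt, $v$ fails to be frozen exactly when there is some color $c \neq c_0$ that is not attained by any frozen child. For a fixed color $c$, the probability that no child is both frozen and colored $c$ equals $(1 - q_{h-1}/(k-1))^b$. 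A union bound over the $k-1$ candidate colors gives
\[
1 - q_h \;\le\; (k-1)\left(1 - \frac{q_{h-1}}{k-1}\right)^{b} \;\le\; (k-1)\exp\!\left(-\frac{b\,q_{h-1}}{k-1}\right).
\]

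Plugging in $k = b/((1+\eps)\ln b)$, so that $b/(k-1) = (1+\eps)\ln b (1 + o(1))$, and the inductive hypothesis $q_{h-1} \ge 1 - b^{-\eps}$, the right-hand side becomes (up to $(1+o(1))$ factors)
\[
\frac{b}{(1+\eps)\ln b}\; b^{-(1+\eps)(1-b^{-\eps})} \;=\; \frac{b^{-\eps}\,b^{(1+\eps)b^{-\eps}}}{(1+\eps)\ln b}.
\]
Since $(1+\eps)b^{-\eps}\ln b \to 0$ as $b \to \infty$, the factor $b^{(1+\eps)b^{-\eps}} \to 1$, and the denominator $(1+\eps)\ln b$ dominates it, so the whole expression is at most $b^{-\eps}$ for $b$ sufficiently large. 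This closes the induction.

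The main obstacle is ensuring the inductive calculation is tight enough for the regime $0 < \eps < 1$ (equivalently $1/2 < C < 1$) covered by the lemma; the previous bound of \cite{GJK} sufficed only in a smaller range of $C$. The slack comes from the $(1+\eps)\ln b$ factor in the denominator, which comfortably absorbs the $b^{(1+\eps)b^{-\eps}}$ factor once $b$ is large, and this is the reason why one exact $b^{-\eps}$ bound (rather than a weaker polynomial) is achievable uniformly in $h$. The other place to take care is justifying the conditional independence and color-symmetry claims used to reduce the analysis to i.i.d.\ children; these follow from the Markov property of the uniform distribution on tree colorings together with the symmetry of $[k] \setminus \{c_0\}$.
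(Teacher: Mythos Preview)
Your proposal is correct and follows essentially the same approach as the paper's proof: induction on the height, independence of the children's subtrees conditioned on the root's color, a union bound over the $k-1$ colors yielding $1-q_h \le (k-1)\exp(-b\,q_{h-1}/(k-1))$, and then the same substitution $k=b/((1+\eps)\ln b)$ and inductive hypothesis to close the recursion for $b$ large. The only cosmetic difference is that the paper tracks $U_\ell = 1-q_\ell$ directly, while you carry $q_h$; the calculations are line-for-line equivalent.
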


\subsection{Upper bound on the conductance}
Let $S_c=S_c(T)$ denote those colorings in $\Omega(T)$ where the root
of $T$
is frozen to color $c$. Let $S=\bigcup_{1\le c\le k/2} S_c$. We will
analyze the conductance
of $S$ to lower bound the mixing time.

To upper bound the conductance of $S$, we need to bound the number
of colorings $\sigma\in S$ which can leave $S$ with one transition,
and also the total number of transitions leaving $S$.
To unfreeze the root, we need to recolor a leaf.
Thus, we need to bound the number of colorings frozen at the root
which can become unfrozen
by one recoloring, and in that case,
we need to bound the number of
leaves which can be recolored to unfreeze the root.
For a coloring $\sigma$, vertex $v$ and color $c$, let
$\sigma^{v\rightarrow c}$ denote the coloring obtained by
recoloring $v$ to $c$.

We capture the colorings on the ``frontier'' of $S$ as follows.
For tree $T$, coloring $\sigma\in\Omega(T)$, a vertex $v$ and a leaf
$z$ of $T_v$,
let $\CE{\sigma}{v,z}$ denote the event that
the coloring~$\sigma$ is frozen at the vertex $v$ of $T$
and there exists a color $c$ where
the coloring $\sigma^{z\rightarrow c}$ is not frozen at the vertex $v$.
By definition, this event only depends on the configurations at the
leaves of the subtree $T_v$.
In particular, for the root of the tree, let $\E(\sigma,z):=\CE
{\sigma
}{r,z}$ and $\mathbf{1}_{\sigma,z}$ be the indicator of it.

We can convert the above intuition
into the following upper bound on conductance of $S$ (similarly to
Lemma 10 in~\cite{GJK}).

\begin{lemma}
\label{lem:conductance}
\[
\Phi_S
\le
\frac{6}{n}\sum_{z\in L(T)} \Probability_{\sigma\in\Omega}[{\E
(\sigma,z)}].
\label{eqn:cond1}
\]
\end{lemma}

Now if we can prove that
%
%
\begin{equation}
\label{eq:main-lower}
\Probability_{\sigma\in\Omega}[{\E(\sigma,z)}]\le b^{-(1+\epsl-o(1))H},
\end{equation}
where $o(1)$ is an inverse polynomial of $b$ when $\epsl< 1$ and
equals to zero when $\epsl\ge1$. This will be clarified later in the
proof of Lemma~\ref{lem:Abound}.
Then by plugging this back into the upper bound \eqref{eqn:cond1}, we
get
\[
\Phi_S\le\frac{6}{n}\cdot b^H \cdot b^{-(1+\epsl-o(1))H} \le
20n^{-1-\epsl+o(1)}.
\]
Therefore, we can conclude that the conductance of this Glauber
dynamics is $O(n^{-1-\epsl+o(1)})$, and hence by \eqref{eqn:TmixTrel}
and \eqref{eqn:lbound-Trel}, the mixing time and the relaxation time is
$\Omega(n^{1/C-o(1)})$.

\subsection{\texorpdfstring{Proof of (\protect\ref{eq:main-lower})}{Proof of (11)}}
Let $\Omega^*=\{\sigma\in\Omega\dvtx\sigma(r)=c^*\}$ be the set of
colors where the root is colored $c^*$.
By symmetry, it is easy to see that $
\Probability_{\sigma\in\Omega}[{\E(\sigma,z)}]=\Probability
_{\sigma\in \Omega^*}[{\E (\sigma,z)}]
$. Therefore, for the remainder of the proof we condition on the root
being colored $c^*$.
To simplify the notation, we denote $B:= b^{-(1+\epsl-o(1))}$.
Let $w_0=r, w_1, \ldots, w_{H-1}, w_H=z$
denote the path in $T$ from the root $r$ down to the leaf $z$. We will
show by induction that
\begin{eqnarray*}
\Probability_{\sigma\in\Omega^*}[{\CE{\sigma}{r,z}}]
& \le&
B \Probability_{\sigma\in\Omega^*}[{\CE{\sigma}{w_1,z}}]
\le B^2 \Probability_{\sigma\in\Omega^*}[{\CE{\sigma}{w_2,z}}]
\\ &
\le&\cdots\le B^H \Probability_{\sigma\in\Omega^*}[{\CE{\sigma
}{w_H,z}}] = B^H.
\end{eqnarray*}
For the event $\E(\sigma,z)$ to occur, we need that along the path from
the leaf $z$ to the root $r$, unfreezing each of these vertices will
``free'' a color for their parent. More precisely,
for $\sigma$ to be in $\E(\sigma,z)$, $w_1$ has to be frozen because
the color of $z$ only affects the root through $w_1$, and if
$w_1$ is not frozen, then it cannot affect the root becoming unfrozen.
In order for the root to become unfrozen by changing the
color of the leaf $z$, it must also occur that $w_1$ becomes unfrozen at
the same time, hence $\sigma\in\CE{\sigma}{w_1,z}$, that is, $\E
(\sigma,z)
\subseteq\CE{\sigma}{w_1,z}$
and more generally, $\CE{\sigma}{w_i,z} \subseteq\CE{\sigma
}{w_{i+1},z}$.

For each $1\le i\le H$, let
$\CA{\sigma}{w_i,z}$
denote the event that no sibling $y$ of $w_i$ satisfies both of the following:
$\sigma(y)=\sigma(w_i)$ and $\sigma$ is frozen at $y$.
By the siblings of $w_i$, as usual we mean the children (other than
$w_i$) of $w_{i-1}$.
The event $\CE{\sigma}{w_i,z}$ implies the fact that $w_{i+1}$ is the
only child that causes $w_i$ simultaneously being frozen and being
blocked from using color $\sigma(w_{i+1})$, which means $\CE{\sigma
}{w_{i},z} \subseteq\CA{\sigma}{w_{i+1},z}$. We will show the
following lemma for bounding the probability of $\CA{\sigma}{w_1,z}$.
\begin{lemma}
\label{lem:Abound}
Let $\C^*=\C-c^*$. For a fixed color $c_1 \in\C^*$,
\[
\Probability_{\sigma\in\Omega^*}[{\CA{\sigma}{w_1,z}} \mid
{\sigma (w_1)=c_1} ]\le
B = b^{-(1+\epsl-o(1))}.
\]
%
\end{lemma}

Observe that the events $\CA{\sigma}{1,z}$ and $\CE{\sigma}{w_1,z}$ are
independent, conditioned on the fixed colors of the root and $w_1$,
because they depend on the configurations of different parts of leaves.
Then we have that for each $c_1\in\C^*$,
%
%
\begin{eqnarray}\label{eq:fixcolor}
&&
\Probability_{\sigma\in\Omega^*}\bigl[{\bigl(\sigma(w_1)=c_1\bigr)\cap\CE
{\sigma }{w_1,z}\cap\CA{\sigma}{1,z}}\bigr]
\nonumber\\
&&\qquad=
\Probability_{\sigma\in\Omega^*}[{\CE{\sigma}{w_1,z}} \mid
{\sigma (w_1)=c_1} ]\cdot
\Probability_{\sigma\in\Omega^*}[{\CA{\sigma}{1,z}} \mid
{\sigma (w_1)=c_1} ]\cdot
\frac{1}{k-1}\\
&&\qquad\le
\frac{B^{H-1}\cdot B}{k-1},\nonumber
\end{eqnarray}
where the last inequality is by the inductive hypothesis applied on the
complete tree $T_{w_1}$ of height $H-1$ and Lemma~\ref{lem:Abound}.

Finally, by the fact that $\CE{\sigma}{r,z} \subseteq\CE{\sigma
}{w_1,z}\cap\CA{\sigma}{1,z}$ and \eqref{eq:fixcolor} above, we have
\begin{eqnarray*}
\Probability_{\sigma\in\Omega^*}[{\CE{\sigma}{r,z}}]
&\le&
\Probability_{\sigma\in\Omega^*}[{\CE{\sigma}{w_1,z}\cap\CA
{\sigma }{w_1,z}}]\\
&=&
\sum_{c_1\in\C^*} \Probability_{\sigma\in\Omega^*}\bigl[{\bigl(\sigma
(w_1)=c_1\bigr)\cap \CE{\sigma}{w_1,z}\cap\CA{\sigma}{w_1,z}}\bigr]\\
&\le& B^H.
\end{eqnarray*}
This completes the proof of \eqref{eq:main-lower}. To complete the
proof of the lower bounds when $C<1$ in Theorem~\ref{main-theorem}, we
need to prove
Lemmas~\ref{lem:frozen},~\ref{lem:conductance} and~\ref{lem:Abound}.

\subsection{Proofs of lemmas}
\mbox{}
\begin{pf*}{Proof of Lemma \protect\ref{lem:frozen}}
The proof is very similar to the proof of Lemma 8 in~\cite{GJK}. We
include it here for completeness.

Let $U_\ell$ be the probability that a vertex at the height $\ell$ is
not frozen. We are going to prove that $U_\ell< b^{-\epsl}$ by induction.

First of all, by definition, $U_0 = 0$ since they are leaves.
Let $v$ be a vertex at height $\ell> 0$. Since the probability that
the color of $v$ equals $c$ is independent from the probability that
$v$ is frozen, therefore we can just fix the color of $v$ to some
$c^*\in\C$, and hence
\[
U_\ell=\Probability[{v \mbox{ is not frozen in }\sigma} \mid
{\sigma(v)=c^*} ].
\]
Let $w$ be a child of $v$. Again by the same argument using the
independency, the probability that $w$ is frozen to color $c$ equals
$\frac{1-U_{\ell-1}}{k-1}$. Thus, the probability that all the children
of $v$ are either not frozen or not colored by using $c$ is
$(1-(1-U_{\ell-1})/({k-1}))^b$.

By the union bound and induction, $U_\ell$
is bounded by
\[
(k-1)\biggl(1-\frac{1-U_{\ell-1}}{k-1}\biggr)^b
\le
(k-1)\exp\biggl(-\frac{b(1-b^{-\epsl})}{k-1}\biggr)
\le
b^{-\epsl},
\]
where the last inequality holds for large $b$.\vadjust{\goodbreak}
\end{pf*}

\begin{pf*}{Proof of Lemma \protect\ref{lem:conductance}}
Let $F:=\bigcup_{c\in\C} S_c$ be the set of colorings that freeze
the root.
As we discussed before, by symmetry, $\pi(S_{c_1})=\pi(S_{c_2})$ for
$c_1,c_2 \in\C$. Then $\pi(\bar{S})
\ge1/2$. Also, by Lemma~\ref{lem:frozen}, we know that
$\pi(F) \ge1-b^{-\epsl}$.
Therefore for any $\epsl> 0$, there exists a $b_0$ such that for all
$b>b_0$, $\pi(S)\pi(\bar{S})\ge1/6$.
From the definition of $\Phi_S$, we know that
\[
\Phi_S\le6\biggl(\sum_{\sigma\in S}\sum_{\eta\in\bar{S}}\pi
(\sigma
)P(\sigma,\eta)\biggr).
\]

Notice that, for any $\sigma\in S_{c_1}$, $\eta\in S_{c_2}$ and
$c_1\neq c_2$, we have $P(\sigma,\eta)=0$,
because
it is impossible to change the color of the frozen root by just one
move. Further, in order to unfreeze the root in one step, the Glauber
dynamics has to first recolor a leaf
and change the color of the leaf so as to unfreeze the root. That is,
$\eta$ can only be $\sigma^{z\rightarrow c}$ for some $z\in L(T)$ and
$c\in\C^*$, where $\C^* = \C-\{\mbox{the color of the parent of $z$
in }\sigma\}$. Therefore,
%
%
\begin{eqnarray}\label{eqn:GD2}
\Phi_S
&\le&
6\sum_{\sigma\in F}\sum_{\eta\in\bar{F}}\pi(\sigma)P(\sigma
,\eta)
\nonumber
\\[-8pt]
\\[-8pt]
\nonumber
&\le&
6\sum_{\sigma\in F}\sum_{z\in L(T)}(\mathbf{1}_{\sigma,z}\pi
(\sigma
)P(\sigma,\eta)),
\end{eqnarray}
where $\mathbf{1}_{\sigma,z}$ is the indicator for the event that
the root in coloring $\sigma$ is frozen, and there exists a color $c$ where
the root in the coloring $\sigma^{z\rightarrow c}$ is not frozen.

By the definition of the Glauber dynamics,
we know that $\pi(\sigma)=1/|\Omega|$ and $P(\sigma,\eta)=1/(n(k-1))$
for the case that the change of color happens at a leaf. Therefore,
from \eqref{eqn:GD2}, we have
\[
\Phi_S
\le\frac{6}{n}\sum_{\sigma\in\Omega}\sum_{z\in L(T)}\frac
{\mathbf
{1}_{\sigma,z}}{|\Omega|}
=\frac{6}{n}\sum_{z\in L(T)} \sum_{\sigma\in\Omega(T)} \frac
{\mathbf
{1}_{\sigma,z}}{|\Omega(T)|}.
\]
\upqed\end{pf*}

\begin{pf*}{Proof of Lemma \protect\ref{lem:Abound}}
When $\epsl< 1$, the probability that all the siblings of $w_1$ are
either not frozen or not colored with $c_1$ is upper bounded by
\begin{eqnarray*}
\biggl(1-\frac{1-U_{H-1}}{k-1}\biggr)^{b-1}
&\le&
\exp\biggl(-\frac{(b-1)(1-b^{-\epsl})}{k-1}\biggr)\\
&\le&
b^{-(1+\epsl)(1 - b^{-\epsl})}.
\end{eqnarray*}
Now we can see that $o(1)$ is actually $(1+\epsl)/b^\epsl$ when
$\epsl
< 1$.

Note that, when $\epsl\ge1$, in the same way it is easy to see that
\[
\biggl(1-\frac{1-U_{H-1}}{k-1}\biggr)^{b-1}
\le b^{-(1+\epsl)}.
\]
\upqed\end{pf*}

\section{\texorpdfstring{A Simple generalization to $k=o(b/\ln{b})$: Proof of Theorem \protect\ref{main-theorem2}}
{A Simple generalization to $k=o(b/\ln{b})$: Proof of Theorem 2}}\label{generalization}

In all of the previous sections, we assumed $k=Cb/\ln b$ where $C$ is
constant. But we are also interested in the case when $k$ is constant,\vadjust{\goodbreak}
say a hundred colors,
and what the mixing time of the Glauber dynamics will be in this case.
Let $\alpha= \alpha(k,b) := b / (k \ln b)$. We would also like to see
how to generalize the upper bound and lower bound analysis assuming
$\alpha$ is any function growing with $b$, that is, when $k$ is
$o(b/\ln b)$.
Actually, all of our proofs will be the same, and we just need to
slightly modify the statements.

For the upper bound, we change Lemma~\ref{lem:upper-below} and Lemma
\ref{lem:common-avail} into the following ones.
\begin{lemma}
\label{lem:upper-below2}
Let $T=20b\ln{b}$. There exists $b_0$, for all $(x_0,y_0)\in\Omega
\times\Omega$,
all $\alpha(k,b) \geq2$ and all $b > b_0$ the following holds:
\[
\Probability[{X_{T} = Y_{T}} \mid{X_0 = x_0, Y_0 = y_0} ] \ge
1/\bigl(20\alpha
(k,b)b^{\alpha(k,b)} \ln b\bigr).
\]
\end{lemma}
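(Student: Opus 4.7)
\textbf{Proof plan for Lemma \ref{lem:upper-below2}.} The plan is to reuse the four-stage coupling argument from the proof of Lemma \ref{lem:upper-below} with essentially no structural change, substituting $\epsilon \leftarrow \alpha(k,b) - 1$ (which is now allowed to grow with $b$, but is at least $1$ since $\alpha \geq 2$). Since $\alpha \geq 2$, we are firmly inside the "$\epsilon > 0$" regime, so the stronger Lemma \ref{lem:warmup} (not the weaker Lemma \ref{lem:warmup0}) is what we need to generalize, and the $\ln^3 b$ threshold case of Lemma \ref{lem:upper-below} does not arise.

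First I would fix $T = 20b\ln b$, partitioned as before into a warm-up round of length $\twarm = 8(b+1)\ln b$, a root-coupling round of random length $T_1 < 4(b+1)\ln b$, and a leaf-cleanup round of length $\Tlev = 4(b+1)\ln b$, so that $\twarm + T_1 + \Tlev < T$. Then I would re-examine the three component lemmas in turn, tracking how the key probability estimates scale in $\alpha$ rather than in the specific choice $\epsilon = 1/C - 1$:
\begin{enumerate}
\item \emph{Warm-up (generalization of Lemma \ref{lem:warmup}).} The coupling with the auxiliary single-chain process $(W_t)$ on $k-1$ colors goes through verbatim. The only number to re-check is the bound on $\ProbCond{|A_{W_t}| \ge 1}{\E}$, which is at most $(k-1)(1-1/(k-1))^b \le k\,\eee^{-b/(k-1)}(1+o(1))$. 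With $k = b/(\alpha \ln b)$ this is at most $O\!\left(b^{1-\alpha}/(\alpha \ln b)\right) = O\!\left(1/(\alpha b^{\alpha-1}\ln b)\right)$, which is still $o(1/\ln b)$ for $\alpha \ge 2$. Hence the union bound over the $O(\ln b)$ root updates in the warm-up still yields "all leaf disagreements are of the same type at time $\twarm$" with probability $1 - o(1)$.
\item \emph{Common available color (generalization of Lemma \ref{lem:common-avail}).} Coupling the maximal coupling with the purely random process $(Z_t)$ on $\C_Z = [k]\setminus\{c_1,c_2\}$ is unchanged. The only quantitative step is the estimate \eqref{eqn:Az}: for large $b$, and with $|\C_Z| = k-2$, $(1-1/|\C_Z|)^b = b^{-\alpha}(1+o(1))$, so $1 - (1 - b^{-\alpha})^{k-2} \ge (k-2)b^{-\alpha}(1 - O(kb^{-\alpha}))$. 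Since $\alpha \geq 2$ gives $kb^{-\alpha}\le 1/(2b\ln b) = o(1)$, this lower bound is at least a constant times $1/(\alpha b^{\alpha-1}\ln b)$. Multiplying by the $\Omega(1)$ probability that $t_z$ falls in $[2(b+1)\ln b, 4(b+1)\ln b)$ (Observation \ref{ob:whp2}) yields $\ProbCond{T_1 < 4(b+1)\ln b}{\cdot} = \Omega(1/(\alpha b^{\alpha-1}\ln b))$, which comfortably exceeds the $1/(20\alpha b^\alpha \ln b)$ claimed bound.
\item \emph{Leaf cleanup (Lemma \ref{lem:leafcoupling}).} This lemma is used as a black box; its proof only uses that $\eps \ge 0$ and standard concentration facts, and it continues to give success probability at least $1/2$ uniformly in $\alpha \ge 2$.
\end{enumerate}

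Multiplying the three stage probabilities (together with the $1/2$ probability that the recolor at time $T_1$ produces agreement at the root) and absorbing constants into the $20$, the overall success probability in one $T$-round is at least $1/(20\alpha b^\alpha \ln b)$, as stated. The main place to be careful is item (2): one has to verify that the approximation $1 - (1 - b^{-\alpha})^{k-2} \approx (k-2)b^{-\alpha}$ remains valid as $\alpha$ grows (so that the first-order term dominates); but the inequality $kb^{-\alpha} \le 1/(2b\ln b)$ valid for $\alpha \ge 2$ makes the error negligible, so no new ideas are needed beyond a careful rewriting of the exponents.
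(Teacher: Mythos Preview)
Your proposal is correct and follows exactly the paper's own approach: Section~\ref{generalization} simply asserts that ``all of our proofs will be the same and we just need to modify slightly the statements,'' substituting $\alpha(k,b)$ for $1+\epsilon$, and your write-up carries out precisely that substitution through the warm-up, root-coupling, and leaf-cleanup stages. In fact your explicit tracking of the estimate in step~(2) recovers the sharper $\Omega(1/(\alpha b^{\alpha-1}\ln b))$ bound (matching Lemma~\ref{lem:common-avail2}), which is stronger than the $1/(20\alpha b^{\alpha}\ln b)$ stated in Lemma~\ref{lem:upper-below2}; the stated bound appears to be a deliberately loose version, and your argument proves it a fortiori.
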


\begin{lemma}
\label{lem:common-avail2}
For any pair of initial states $(x_0,y_0)$ where
all of the leaf disagreements are of the same type, then
\[
\Probability[{T_1^{XY} < 4b\ln b} \mid{(X_0,Y_0)=(x_0,y_0)} ] \ge
1/\bigl(4\alpha
(k,b)b^{\alpha(k,b)-1}\ln b\bigr).
\]
\end{lemma}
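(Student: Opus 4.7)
The plan is to mirror the proof of Lemma \ref{lem:common-avail} essentially verbatim, with the parameter substitution $1+\eps \mapsto \alpha(k,b)$ and $b^{\eps} \mapsto b^{\alpha(k,b)-1}$ wherever they appear. The assumption $\alpha(k,b) \ge 2$ inherited from Lemma \ref{lem:upper-below2} plays the same role that $\eps \ge 0$ played before, and we only need $k \ge 3$ so that the auxiliary color set $\C_Z := [k] \setminus \{\ca,\cb\}$ is nonempty; here $\ca,\cb$ are again the two colors involved in the leaf disagreements under the hypothesis on $(x_0,y_0)$.

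First I would introduce the same auxiliary Glauber process $(Z_t)$ on the graph $G_Z$ consisting of $b+1$ isolated vertices $v_0,v_1,\dots,v_b$, run with colors from $\C_Z$, and couple it to $(X_t,Y_t)$ exactly as before: leaf updates are copied in $Z$ whenever the coupled chain picks a color in $\C_Z$, the root update in $Z$ is performed independently with a uniform color from $\C_Z$, and otherwise $Z$ recolors the corresponding vertex with a fresh uniform color. The invariant $A_{Z_t} \subseteq A_{X_t}(r) \cap A_{Y_t}(r)$ for all $t \le T_1^{XY}$ carries over unchanged, so under the coupling $T_1^{XY} \le T_1^Z$; thus it suffices to lower bound $\Prob{T_1^Z < 4b\ln b}$ by $1/(4\alpha b^{\alpha-1}\ln b)$. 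Observations \ref{ob:whp1} and \ref{ob:whp2}, as well as the negative association argument for the leaf color indicators, all depend only on $b$ and not on the value of $\alpha$, so they transfer without change.

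The heart of the argument is to redo the estimate
\[
\ProbCond{A_{Z_\tau}\neq\emptyset}{t_z=\tau}
\ \ge\ 0.99\Bigl(1-\bigl(1-(1-1/|\C_Z|)^b\bigr)^{|\C_Z|}\Bigr)
\]
with $|\C_Z|=k-2$ and $k=b/(\alpha\ln b)$. Expanding, $(1-1/|\C_Z|)^b \sim e^{-b/k} = b^{-\alpha}$, and then $1-(1-b^{-\alpha})^{|\C_Z|} \sim 1-\exp(-kb^{-\alpha}) \sim k b^{-\alpha} = b^{1-\alpha}/(\alpha\ln b)$, since for $\alpha \ge 2$ we have $kb^{-\alpha} \le 1/(2b\ln b) \ll 1$ and we can use $1-e^{-x}\ge (1-o(1))x$. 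This gives a lower bound of order $1/(3\alpha b^{\alpha-1}\ln b)$, and combining with Observation \ref{ob:whp2} and the same final union-bound step as in Lemma \ref{lem:common-avail} produces the claimed $1/(4\alpha b^{\alpha-1}\ln b)$.

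The main technical point to verify, and the only place where the original proof has to be reexamined, is that the Taylor approximations $(1-1/|\C_Z|)^b = (1+o(1))e^{-b/k}$ and $1-e^{-x} \ge (1-o(1))x$ remain valid \emph{uniformly} in $\alpha \ge 2$, not just for constant $\alpha$. Both facts follow from the crude estimates $b/k = \alpha\ln b \le b$ and $kb^{-\alpha} \le kb^{-2} \to 0$, so if anything the approximations become sharper in the large-$\alpha$ regime and the factor $0.99$ as well as the downgrade from $1/3$ to $1/4$ absorb the error with room to spare. Hence no new ideas beyond those in the proof of Lemma \ref{lem:common-avail} are needed.
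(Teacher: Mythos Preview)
Your proposal is correct and matches the paper's approach exactly: the paper does not give a separate proof of Lemma~\ref{lem:common-avail2} but simply asserts that ``all of our proofs will be the same and we just need to modify slightly the statements,'' which is precisely the parameter substitution $1+\eps \mapsto \alpha(k,b)$ that you carry out. Your explicit check that the asymptotics $(1-1/|\C_Z|)^b \sim b^{-\alpha}$ and $kb^{-\alpha} \to 0$ remain valid (indeed improve) for $\alpha \ge 2$ is exactly the verification the paper leaves to the reader.
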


Then by the same argument as in Section~\ref{upbelow}, we are able to
show that the relaxation time of the Glauber dynamics on $G^*$ is upper
bounded by $O(\alpha b^{\alpha}\ln b)$. Thus, the mixing time of the
Glauber dynamics on the complete tree is bounded by
\[
\Tmix= O\bigl(n^{\alpha+ (\ln{\alpha}+2\ln\ln b + 20)/{\ln b}}\ln
n\bigr),
\]
and the relaxation time is bounded by
\[
\Trel= O\bigl(n^{\alpha+ (\ln{\alpha}+2\ln\ln b + 20)/{\ln b}}\bigr).
\]

For the lower bound, we again place Lemma~\ref{lem:frozen} and Lemma
\ref{lem:Abound} into the following lemmas.
\begin{lemma}
\label{lem:frozen2}
In a random coloring of the tree $T$, the probability that a vertex of
$T$ is not frozen is at most $b^{-1}$.
\end{lemma}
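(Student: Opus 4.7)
The plan is to mimic exactly the inductive proof of Lemma \ref{lem:frozen}, but tightening the target bound from $b^{-\eps}$ to $b^{-1}$ and using the stronger hypothesis $\alpha(k,b) = b/(k\ln b) \geq 2$ in the critical exponentiation step. Let $U_\ell$ denote the probability that a vertex at height $\ell$ is not frozen in a uniformly random proper $k$-coloring of $T$; I will show $U_\ell \leq b^{-1}$ by induction on $\ell$. The base case $U_0 = 0$ is immediate since the leaves are frozen by definition.

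For the inductive step at height $\ell > 0$, I would proceed as in the original lemma: fix the color of the vertex $v$ at height $\ell$ to some $c^* \in \C$ (by symmetry this conditioning does not change the not-frozen probability), and then for any fixed child $w$ of $v$, note that the event ``$w$ is frozen with color $c$'' (for $c\neq c^*$) has probability $(1-U_{\ell-1})/(k-1)$, and that these events are independent across the $b$ subtrees rooted at the children. By a union bound over the $k-1$ choices of $c$ and the standard estimate $1-x \leq e^{-x}$,
\[
U_\ell \;\leq\; (k-1)\left(1-\frac{1-U_{\ell-1}}{k-1}\right)^{b} \;\leq\; (k-1)\exp\!\left(-\frac{b(1-b^{-1})}{k-1}\right),
\]
where the last inequality uses the inductive hypothesis $U_{\ell-1}\leq b^{-1}$.

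The main work is then a deterministic calculation showing that, under $\alpha \geq 2$, this expression is at most $b^{-1}$ for all sufficiently large $b$. Taking logarithms, this reduces to verifying
\[
(k-1)\bigl(\ln b + \ln(k-1)\bigr) \;\leq\; b - 1.
\]
Substituting $k = b/(\alpha \ln b)$, the left-hand side is at most
\[
k(\ln b + \ln k) \;=\; \frac{b}{\alpha \ln b}\bigl(2\ln b - \ln \alpha - \ln\ln b\bigr) \;=\; \frac{2b}{\alpha} - \frac{b(\ln\alpha + \ln\ln b)}{\alpha\ln b}.
\]
For $\alpha \geq 2$ the leading term is bounded by $b$, and the negative correction term is at least $1$ for $b$ sufficiently large (since $\alpha \leq b$ in the meaningful range $k \geq 1$, and $\ln\ln b \to \infty$), yielding the required bound $b - 1$.

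The only real obstacle is keeping the calculation tight enough: unlike Lemma \ref{lem:frozen} where one has a full factor of $b^{-\eps}$ of slack at the threshold, here we are asking for $b^{-1}$ at $\alpha \geq 2$, so the estimate $2b/\alpha \leq b$ is exactly tight at $\alpha = 2$ and the inductive closure relies entirely on the lower-order correction from $\ln\alpha + \ln\ln b$. As long as $b$ is chosen sufficiently large (depending only on universal constants, not on $\alpha$), the argument closes and the induction goes through for every $\ell \leq H$.
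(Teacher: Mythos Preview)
Your proof is correct and follows essentially the same approach as the paper: the same induction on height with the same recursion $U_\ell \le (k-1)\bigl(1-\tfrac{1-U_{\ell-1}}{k-1}\bigr)^b$, closing the induction at the target $U_\ell\le b^{-1}$ via the hypothesis $\alpha\ge 2$. The paper does not give a separate proof of Lemma~\ref{lem:frozen2}; it simply points to the argument of Lemma~\ref{lem:frozen}, and in the remark following that proof records the slightly sharper conclusion $U_\ell \le \tfrac{1}{(1+\eps)b}$ when $\eps\ge 1$ (equivalently $\alpha\ge 2$), which immediately gives $U_\ell\le b^{-1}$.
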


\begin{lemma}
\label{lem:Abound2}
\[
{\Probability_{\sigma\in\Omega^*}[{\CA{\sigma}{w_1,z}} \mid
{\sigma (w_1)=c_1)} ]}\le b^{-\alpha(k,b)}.
\]
\end{lemma}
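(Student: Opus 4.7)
The plan is to repeat the proof of Lemma \ref{lem:Abound} essentially verbatim, substituting the sharper uniform freezing bound of Lemma \ref{lem:frozen2} in place of Lemma \ref{lem:frozen} and tracking the constants in terms of $\alpha = \alpha(k,b) = b/(k\ln b)$.

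First I would unpack $\CA{\sigma}{1,z}$. Condition on $\sigma(r)=c^*$ and $\sigma(w_1)=c_1$. Generating a random proper coloring recursively from the root down, the $b-1$ siblings of $w_1$ receive independent uniform colors from $\C\setminus\{c^*\}$, and, conditioned on the color of each sibling $y$, the configuration on the subtree $T^*_y$ is independent of everything outside $T_y$. Hence for any fixed sibling $y$, the events $\{\sigma(y)=c_1\}$ and $\{y \text{ is frozen in }\sigma\}$ are independent, so
\[
\ProbCondSub{\sigma\in\Omega^*}{\sigma(y)=c_1 \text{ and } y \text{ is frozen}}{\sigma(w_1)=c_1}
= \frac{1-U_{H-1}}{k-1}.
\]
By independence of the sibling subtrees under the conditioning on $\sigma(r)$ and $\sigma(w_1)$,
\[
\ProbCondSub{\sigma\in\Omega^*}{\CA{\sigma}{1,z}}{\sigma(w_1)=c_1}
= \left(1-\frac{1-U_{H-1}}{k-1}\right)^{b-1}.
\]

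Next I would apply Lemma \ref{lem:frozen2}, which gives $U_{H-1}\le b^{-1}$ uniformly (in particular without dependence on $\alpha$). Using $1-x\le \exp(-x)$,
\[
\left(1-\frac{1-U_{H-1}}{k-1}\right)^{b-1}
\le \exp\!\left(-\frac{(b-1)(1-b^{-1})}{k-1}\right)
= \exp\!\left(-\frac{b-2+1/b}{k-1}\right).
\]
It remains to check that, under the hypothesis $\alpha \ge 2$ of Theorem \ref{main-theorem2}, the exponent is at most $-\alpha\ln b = -b/k$. The inequality $(b-2+1/b)/(k-1)\ge b/k$ rearranges to $b\ge 2k - k/b$, which is guaranteed for large $b$ by $b\ge 2k\ln b$ (from $\alpha\ge 2$). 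Hence for all sufficiently large $b$ the displayed quantity is at most $\exp(-\alpha\ln b)=b^{-\alpha}$, giving the lemma.

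The only subtle point (and the main thing to get right) is the last constant-tracking step: the original Lemma \ref{lem:Abound} split into two regimes, $\eps<1$ and $\eps\ge 1$, and only the $\eps\ge 1$ case produced a clean bound with no $o(1)$ loss. Here we are in the regime $\alpha\ge 2$, which is exactly the analogue of $\eps\ge 1$ (since in the $k=Cb/\ln b$ parametrization $\alpha = 1/C = 1+\eps$), so the clean inequality does go through; there is no need for the $b^{(1+\eps)/b^\eps}$ correction that appeared in the $\eps<1$ case. Once one verifies $b\ge 2k$ (equivalently $\alpha\ln b\ge 2$) this is automatic, which is why the statement is clean in terms of~$\alpha$.
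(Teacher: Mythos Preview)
Your proposal is correct and follows essentially the same approach as the paper: the paper does not spell out a separate proof of Lemma~\ref{lem:Abound2} but simply says all proofs ``will be the same'' with the modified freezing bound, and you have carried that out explicitly by rerunning the Lemma~\ref{lem:Abound} computation with $U_{H-1}\le b^{-1}$ from Lemma~\ref{lem:frozen2}. Your observation that $\alpha\ge 2$ corresponds to the $\eps\ge 1$ regime of Lemma~\ref{lem:Abound}, and hence yields the clean bound without the $o(1)$ correction, is exactly the right point.
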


Then, in exactly the same way as in Section~\ref{lbbelow}, we can show
that the mixing time and the relaxation time of the Glauber dynamics on
the complete tree $T$ when $\alpha\ge2$ is lower bounded by $\Omega
(n^{\alpha}) = \Omega(n^{b/(k\ln b)})$.

\section{\texorpdfstring{Bounding the log-Sobolev constant: Proof of Theorem \protect\ref{thm:LS}}
{Bounding the log-Sobolev constant: Proof of Theorem 6}}
\label{logsobolev}
In this section we will analyze the log-Sobolev constant $\ls$ of the
heat-bath Glauber dynamics on the complete tree by comparing it with
the spectral gap $\gap$.
For completeness, we prove Theorem~\ref{thm:LS}, which is an
improvement over the proof of Theorem 5.7 in Martinelli, Sinclar and
Weitz~\cite{MSW}. In their paper, they proved it for the case
of the Ising model on the complete tree with a fixed boundary\vadjust{\goodbreak}
condition, although they observed that it holds more generally. For
convenience, we will use the same notation for the complete tree and
its vertices; that is, $T_\ell$ stands for both the complete tree of
height $\ell$ and its vertices $V(T_\ell)$.

Let $B\subseteq A \subseteq T$ be two subsets of the vertices on tree
$T$. Let $\eta\in\Omega$ be a configuration. Let $\mathrm{E}_A^{\eta}(f)$ be
the expectation of $f$ under a prefixed distribution $\mu$ in the
region $A$ with boundary condition $\eta$. That is,
\[
\mathrm{E}_A^{\eta}(f) = \sum_{\sigma} \frac{\mu(\sigma)}{Z} f(\sigma),
\]
where $\sigma$ ranges over the configurations that are the same as
$\eta
$ outside $A$ (denoted as $\sigma\sim_A \eta$), and $Z$ is the
normalizing factor.
The quantities $\var_A^{\eta}$ and $\ent_A^{\eta}$ are defined
similarly. If we drop $\eta$, then $\mathrm{E}_A(f), \var_A(f), \ent_A(f)$
become functions from $\Omega$ to $R$. The following are standard facts
concerning variance and entropy: the first is the chain rule, and the
second follows from the so-called tensoring property over a product
distribution; see,
for example,
Proposition 5.6 of~\cite{Ledoux}. In the following, we will use the fact
that the distribution on configurations over the tree with the root
removed, has a product form over the subtrees rooted at the children of
the root,
to satisfy the hypothesis for the tensoring property.

\begin{proposition}
\label{prop:lsdec}
\begin{eqnarray*}
\var_A^{\eta}(f) &=& \mathrm{E}_A^{\eta}(\var_B(f)) + \var_A^{\eta}(\mathrm{E}_B(f)),
\\
\ent_A^{\eta}(f) &=& \mathrm{E}_A^{\eta}(\ent_B(f)) + \ent_A^{\eta}(\mathrm{E}_B(f)).
\end{eqnarray*}
\end{proposition}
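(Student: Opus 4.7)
I would prove both identities simultaneously via the tower property for conditional expectations induced by the inclusion $B \subseteq A$. The first step is to verify the auxiliary identity $E_A^\eta(E_B(g)) = E_A^\eta(g)$ for every function $g$ on $\Omega$. Because $E_B(g)$, viewed as a function on $\Omega$, depends only on the configuration outside $B$, and because averaging over $\sigma \sim_A \eta$ can be carried out by first averaging over $\sigma \sim_B \tau$ with the portion on $A \setminus B$ fixed to $\tau$ and then averaging over $\tau \sim_{A\setminus B} \eta$, this is just the law of total probability applied to the measure $\mu$ restricted to $A$ with boundary $\eta$. The product structure of $\mu$ over the subtrees rooted at the children of the root (highlighted in the paragraph preceding the proposition) is what guarantees that this iterated-conditioning factorization is well-defined.

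For the variance identity, the plan is to start from $\var_A^\eta(f) = E_A^\eta(f^2) - (E_A^\eta f)^2$, use the auxiliary tower identity to rewrite the first term as $E_A^\eta(E_B(f^2))$, and then expand pointwise $E_B(f^2) = \var_B(f) + (E_B f)^2$. Substituting this expansion and using $E_A^\eta(E_B f) = E_A^\eta f$, the right-hand side collapses to $E_A^\eta(\var_B(f)) + \bigl[\, E_A^\eta((E_B f)^2) - (E_A^\eta(E_B f))^2 \,\bigr]$, and the bracketed expression is $\var_A^\eta(E_B(f))$ by definition applied to the function $E_B(f)$.

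For the entropy identity, I would mimic exactly the same template with $t \log t$ replacing $t^2$: expand $\ent_A^\eta(f) = E_A^\eta(f\log f) - E_A^\eta(f)\log E_A^\eta(f)$, apply the tower property to the first term to obtain $E_A^\eta(E_B(f\log f))$, and use the pointwise identity $E_B(f \log f) = \ent_B(f) + E_B(f)\log E_B(f)$. Substituting this and using $E_A^\eta(f) = E_A^\eta(E_B f)$, the remaining pieces reassemble into $E_A^\eta(\ent_B(f)) + \bigl[\, E_A^\eta(E_B(f)\log E_B(f)) - E_A^\eta(E_B f)\log E_A^\eta(E_B f)\,\bigr]$, and by definition of entropy the bracketed expression is $\ent_A^\eta(E_B(f))$.

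The only delicate point — and the one I would check most carefully — is the auxiliary tower identity $E_A^\eta(E_B(g)) = E_A^\eta(g)$: one must confirm that $E_B$ (a function on $\Omega$ measurable with respect to the configuration outside $B$) interacts correctly with the partial boundary condition $\eta$ outside $A$, so that the inner/outer split is legitimate. Given the product structure of $\mu$ over child subtrees this is routine, but worth stating explicitly since the remainder of the argument is purely algebraic and identical for the two identities, differing only in whether the convex function being decomposed is $t^2$ or $t\log t$.
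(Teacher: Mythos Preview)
The paper does not actually give a proof of this proposition; it simply records both identities as standard facts (calling them the ``chain rule'') and points to Proposition~5.6 of \cite{Ledoux}. Your argument via the tower property $E_A^\eta(E_B(g)) = E_A^\eta(g)$ followed by the pointwise expansions $E_B(f^2)=\var_B(f)+(E_Bf)^2$ and $E_B(f\log f)=\ent_B(f)+E_B(f)\log E_B(f)$ is exactly the standard derivation, and it is correct.

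One small correction worth noting: you attribute the validity of the tower identity $E_A^\eta(E_B(g)) = E_A^\eta(g)$ to the product structure of $\mu$ over the child subtrees. In fact this identity holds for any $B\subseteq A$ with no independence hypothesis whatsoever; it is just the consistency of iterated conditional expectations (integrating first over $B$ given the rest, and then over $A\setminus B$ given $\eta$, recovers integration over $A$ given $\eta$). The product structure is what is needed for the \emph{tensoring} inequality in Proposition~\ref{prop:lsdec2}, not for the chain rule here. Otherwise your plan is sound and more detailed than what the paper itself provides.
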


\begin{proposition}
\label{prop:lsdec2}
Let $A = \bigcup A_i$ where $A_i$ are disjoint, and suppose that
conditioning on the boundary being $\eta$, the probability of $A_i$'s
being in any configuration for different $i$'s is completely
independent. Then
\[
\var_A^{\eta}(f) \le\sum_i \mathrm{E}_{A}^{\eta}(\var_{A_i}(f))
\]
and
\[
\ent_A^{\eta}(f) \le\sum_i \mathrm{E}_{A}^{\eta}(\ent_{A_i}(f)).
\]
\end{proposition}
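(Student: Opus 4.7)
The plan is to prove both inequalities by induction on the number of components in the partition $A = \bigsqcup_i A_i$, reducing the general statement to a two-component tensoring bound. The base case of a single component is vacuous. For the inductive step I would split off one piece $A_1$ and let $A' = \bigsqcup_{i \ge 2} A_i$; the product structure is inherited by $A'$, so applying the two-component bound to $A = A_1 \sqcup A'$ and then the induction hypothesis to $\var_{A'}(f)$ (or $\ent_{A'}(f)$) yields the full statement.

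The two-component variance bound is the easier piece. Proposition~\ref{prop:lsdec} with $B = A_1$ gives the identity
\[
\var_A^\eta(f) \;=\; E_A^\eta[\var_{A_1}(f)] + \var_A^\eta(E_{A_1}(f)),
\]
so it suffices to bound the second summand by $E_A^\eta[\var_{A'}(f)]$. By the conditional independence of the configurations on $A_1$ and $A'$ given $\eta$, the function $g := E_{A_1}(f)$ depends only on the $A'$-coordinates, so $\var_A^\eta(g) = \var_{A'}^\eta(g)$. Writing this variance in its symmetric ``two independent copies'' form, the identity $g(\tau') - g(\tau'') = E_{A_1}[f(\cdot,\tau') - f(\cdot,\tau'')]$ together with Jensen applied to $x \mapsto x^2$ yields
\[
\var_{A'}^\eta(E_{A_1} f) \;\le\; E_{A_1}^\eta\bigl[\var_{A'}^\eta(f)\bigr] \;=\; E_A^\eta[\var_{A'}(f)],
\]
which closes the two-component case.

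The entropy case follows the same skeleton, but the analogue of the Jensen step becomes $\ent_{A'}(E_{A_1} f) \le E_{A_1}[\ent_{A'}(f)]$, which is precisely the two-variable form of Han's inequality, i.e.\ the subadditivity of entropy for product measures. This is the step I expect to be the main obstacle: a direct expansion shows it is equivalent to the nontrivial bound $E_{A_1}[h \log h] + E_{A'}[g \log g] \le E_A^\eta[f \log f] + m \log m$, with $h = E_{A'}(f)$, $g = E_{A_1}(f)$, $m = E_A^\eta(f)$, and this does not reduce to a single application of Jensen to $x \log x$ (one of the two needed bounds goes the wrong way). The standard remedy is the Donsker--Varadhan variational formula $\ent(f) = \sup_\varphi\{E[f \varphi] : E[e^\varphi] \le 1\}$: given a test function $\varphi$ on $A$ meeting the constraint, decompose $\varphi = u + v$ with $u(x) = \log E_{A'}[e^\varphi \mid A_1 = x]$ and $v = \varphi - u$, verify that $E_{A_1}[e^u] \le 1$ and $E_{A'}[e^v \mid A_1] \equiv 1$, and apply the variational formula separately in each coordinate. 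This is the content of Proposition~5.6 of \cite{Ledoux}; with it in hand the induction lifts the two-component bound to the stated inequality exactly as in the variance case.
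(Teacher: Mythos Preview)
Your proposal is correct and matches the paper's approach. For the variance inequality the paper's source contains (after the document body) exactly your induction-plus-chain-rule-plus-Jensen argument: split off $B=A_1$, apply Proposition~\ref{prop:lsdec}, and bound $\var_A^\eta(E_B f)\le E_A^\eta(\var_{A\setminus B} f)$ via convexity of $x^2$; for the entropy inequality the paper simply cites Proposition~5.6 of \cite{Ledoux}, which is precisely the Donsker--Varadhan tensoring you sketch.
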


\begin{lemma}
\label{lem:ls}
Let $\ls(\ell)$ be the log-Sobolev constant of the heat-bath Glauber
dynamics on the complete tree of height $\ell> 0$ with the root being
attached to an external vertex with a fixed color, then
\[
\ls(\ell)^{-1} \le\ls(\ell-1)^{-1} + \alpha\cdot\gap(\ell)^{-1},
\]
where $\alpha= \frac{\log(k-2)}{1-2/(k-1)} = \ls(0)^{-1}$.
\end{lemma}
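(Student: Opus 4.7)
My plan is to mimic the standard block-plus-boundary decomposition for controlling the log-Sobolev constant. Let $B = T_\ell \setminus \{r\}$, and apply the entropy chain rule from Proposition \ref{prop:lsdec}:
\[
\ent_{T_\ell}(f) = E_{T_\ell}[\ent_B(f)] + \ent_{T_\ell}(E_B(f)).
\]
I will bound the first summand by the product structure of the subtrees plus the definition of $\ls(\ell-1)$, and the second summand by a single-site log-Sobolev estimate for the root combined with the spectral gap on $T_\ell$.

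First summand: conditional on $\sigma(r)$, the configuration on $B$ is a product of independent colorings on the $b$ subtrees $T_{v_i}$ of height $\ell-1$ rooted at the children of $r$, each with $\sigma(r)$ serving as fixed boundary. By Proposition \ref{prop:lsdec2}, $\ent_B(f) \le \sum_i E_B[\ent_{T_{v_i}}(f)]$, and the definition of $\ls(\ell-1)$ gives $\ent_{T_{v_i}}(f) \le \ls(\ell-1)^{-1}\, \D_{T_{v_i}}(\sqrt{f})$. Since the single-site Dirichlet-form contributions from vertices of disjoint subtrees sum to at most $\D_{T_\ell}(\sqrt{f})$, taking expectations yields
\[
E_{T_\ell}[\ent_B(f)] \le \ls(\ell-1)^{-1}\, \D_{T_\ell}(\sqrt{f}).
\]

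Second summand: $g := E_B(f)$ depends only on $\sigma(r)$, whose marginal under $\pi$ is uniform over the $k-1$ colors different from the fixed external boundary. The heat-bath single-site chain on this uniform $(k-1)$-point distribution has Dirichlet form equal to the variance, and by Diaconis--Saloff-Coste its log-Sobolev constant is exactly $\ls(0) = 1/\alpha$, so
\[
\ent_{T_\ell}(g) \le \alpha\, \var_{T_\ell}(\sqrt{E_B(f)}).
\]
I then compare $\var_{T_\ell}(\sqrt{E_B(f)})$ with $\var_{T_\ell}(\sqrt{f})$ as follows: writing $\var_{T_\ell}(\sqrt{E_B(f)}) = E_{T_\ell}[f] - (E_{T_\ell}[\sqrt{E_B(f)}])^2$ and using Jensen's inequality $\sqrt{E_B(f)} \ge E_B(\sqrt{f})$ (so that $E_{T_\ell}[\sqrt{E_B(f)}] \ge E_{T_\ell}[\sqrt{f}]$), I obtain $\var_{T_\ell}(\sqrt{E_B(f)}) \le \var_{T_\ell}(\sqrt{f}) \le \gap(\ell)^{-1}\, \D_{T_\ell}(\sqrt{f})$. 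Summing the two bounds gives the claimed recursion $\ls(\ell)^{-1} \le \ls(\ell-1)^{-1} + \alpha\, \gap(\ell)^{-1}$.

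The step I expect to require the most care is the final variance comparison $\var(\sqrt{E_B(f)}) \le \var(\sqrt{f})$: because $\sqrt{\cdot}$ is concave, a naive direct Jensen argument on the outer square root points the wrong way, so one must expand both variances via the identity $\var(h) = E[h^2] - (E[h])^2$ and apply Jensen only to the inner conditional expectation, as sketched above. All other ingredients are routine consequences of tensoring, the definition of $\ls(\ell-1)$ on the subtrees, the spectral-gap inequality on $T_\ell$, and the standard complete-graph log-Sobolev constant.
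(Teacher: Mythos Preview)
Your proposal is correct and follows essentially the same route as the paper: decompose $\ent(f)$ via the chain rule with $B=T_\ell\setminus\{r\}$, handle the first summand by tensoring over the $b$ subtrees and invoking $\ls(\ell-1)$, and handle the second by the complete-graph log-Sobolev constant $\alpha$ followed by the variance comparison $\var(\sqrt{E_B(f)})\le\var(\sqrt f)$ (via Jensen on the inner conditional expectation) and the spectral-gap inequality. The only point worth noting is that your caution about the Jensen step is exactly right and matches what the paper does; otherwise the arguments coincide.
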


\begin{pf}
Let $f$ be any nonnegative function. Let $I$ be the set of vertices in
the complete tree $T_{\ell}$ without the root, that is, $I = T_{\ell}
\setminus\{\operatorname{root}\}$.\vadjust{\goodbreak} Let us first use Proposition \ref
{prop:lsdec} to analyze the $\ent(f)$.
\[
\ent(f) = \mathrm{E}(\ent_I(f)) + \ent(\mathrm{E}_I(f)).
\]

We will bound $\mathrm{E}(\ent_I(f))$ and $\ent(\mathrm{E}_I(f))$ separately. For
$\mathrm{E}(\ent
_I(f))$, by Proposition~\ref{prop:lsdec2}, it can be upper bounded as
%
%
\begin{equation}
\label{eqn:LS1}
\mathrm{E}(\ent_I(f)) \le\sum_{v} \mathrm{E}(\ent_{T_v}(f)),
\end{equation}
where $v$ ranges over all the children of the root of $T_\ell$, and
$T_v$ denotes the subtree of $T_\ell$ rooted at the vertex $v$. Let
$\eta\in\Omega(T_{\ell})$, then for a specific $\ent_{T_v}^{\eta
}(f)$, we then have
%
%
\begin{eqnarray}
\label{eqn:LS2}
\ent_{T_v}^{\eta}(f) \le\ls(\ell-1)^{-1} \D_{T_v}\bigl(\sqrt{f}\bigr),
\end{eqnarray}
where $\D_{T_v}(\sqrt{f})$ is the corresponding Dirichlet form for the
dynamics on the subtree $T_v$. For the heat-bath Glauber dynamics,
since $P(\sigma,\tau)\neq0$ only if they differ at a single vertex, we
can further derive that
%
%
\begin{eqnarray}
\label{eqn:LS3}
\D_{T_v}(f) &=& \frac{1}{2}\sum_{\sigma,\tau} \bigl(f(\sigma) - f(\tau)\bigr)^2
\mu(\sigma) P(\sigma,\tau)
\nonumber
\\[-8pt]
\\[-8pt]
\nonumber
&=& \frac{1}{2}\sum_{x\in T_v} \mathrm{E}_{T_v}^{\eta} \bigl(\var_{\{x\}}(f)\bigr),
\end{eqnarray}
where $\mu(\sigma)$ is the marginal distribution with respect to
$\eta$.

Then, from \eqref{eqn:LS1}, \eqref{eqn:LS2} and the above, we have
\begin{eqnarray*}
\mathrm{E}(\ent_I(f)) &\le& \sum_v \mathrm{E}(\ent_{T_v}(f)) \qquad\mbox{[by \eqref
{eqn:LS1}]}\\
&\le& \sum_v \ls(\ell-1)^{-1} \mathrm{E}\bigl( \D_{T_v}\bigl(\sqrt{f}\bigr) \bigr) \qquad\mbox{[by
\eqref{eqn:LS2}]}\\
& = & \sum_v \ls(\ell-1)^{-1} \mathrm{E}\biggl ( \sum_{x\in T_v}
\mathrm{E}_{T_v}^{\eta}
\bigl[\var_{\{x\}}(f)\bigr] \biggr) \qquad\mbox{[by \eqref{eqn:LS3}]}\\
& = & \ls(\ell-1)^{-1} \sum_{x\in I} \mathrm{E}\bigl( \var_{\{x\}}(f)\bigr)\\
& \le& \ls(\ell-1)^{-1} \D\bigl(\sqrt{f}\bigr)\qquad \mbox{[by applying \eqref
{eqn:LS3} again]}.
\end{eqnarray*}

For $\ent(\mathrm{E}_I(f))$, $\mathrm{E}_I(f)$ can be viewed as a function from $\{
1,2,\ldots,k-1\}$ to $R$ since those $k-1$ values can represent the colors
of the root (boundary). Therefore $\ent(\mathrm{E}_I(f))$ is the entropy of the
random variable $\mathrm{E}_I(f)$ taking $k-1$ values uniformly at random. It is
well known (see, e.g., the Appendix of~\cite{DS}) that $ \frac{\log
(k-2)}{1-2/(k-1)}$ is the inverse of the log-Sobolev constant of the
random walk $\R$ on the complete graph $K_{k-1}$,\vadjust{\goodbreak} which jumps to
stationarity in one step. Thus, letting
$\alpha= \frac{\log(k-2)}{1-2/(k-1)}$, we may upper bound
$\ent(\mathrm{E}_I(f))$ as follows:
\begin{eqnarray*}
&&\ent(\mathrm{E}_I(f)) \\
&&\qquad\le \alpha\D_{\R}\bigl(\sqrt{\mathrm{E}_I(f)}\bigr) \qquad\mbox{(by the
log-Sobolev inequality)}\\
&&\qquad =  \alpha\var_{\R} \bigl(\sqrt{\mathrm{E}_I(f)}\bigr)\qquad \mbox{[for the complete graph
$P(x,y)=\pi_{\R}(y)$]}\\
&&\qquad =  \alpha\var_{T}\bigl (\sqrt{\mathrm{E}_I(f)}\bigr) \\
&& \qquad\le \alpha\bigl(\mathrm{E} [\mathrm{E}_I(f)] - \mathrm{E}^2\bigl(\sqrt{\mathrm{E}_I(f)}\bigr)\bigr) \qquad\mbox{(by
the definition of the variance)}\\
&&\qquad \le \alpha \mathrm{E}\bigl(\sqrt{f}\bigr)^2 - \mathrm{E}^2\bigl(\sqrt{f}\bigr) \qquad\bigl(\mbox{by the
concavity of }\sqrt{x}\bigr) \\
&&\qquad \le \alpha\gap(\ell)^{-1} \D\bigl(\sqrt{f}\bigr) \qquad\mbox{(by the
definition of the spectral gap).}
\end{eqnarray*}

Putting everything together, we prove
\begin{eqnarray*}
\ent(f) &=& \mathrm{E}(\ent_I(f)) + \ent(\mathrm{E}_I(f)) \\
&\le& \ls(\ell-1)^{-1} \D\bigl(\sqrt{f}\bigr) + \alpha\gap(\ell)^{-1} \D
\bigl(\sqrt{f}\bigr),
\end{eqnarray*}
and then by the definition of $\ls$, we get
\[
\ls(\ell)^{-1} \le\ls(\ell-1)^{-1} + \alpha\gap(\ell)^{-1} .
\]
\upqed\end{pf}
\begin{lemma}
\label{lem:gap}
Let $\ls(\ell)$ be the spectral gap of the heat-bath Glauber dynamics
on the complete tree of height $\ell> 0$ with the root being attached
to an external vertex with a fixed color; then
for $\ell> 0$, we have $\gap(\ell) \le\gap(\ell-1)/b$.
\end{lemma}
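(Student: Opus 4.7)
The approach is to apply the variational characterization
\[
  \gap(\ell) = \inf_{f \text{ non-constant}} \frac{\D_{T_\ell}(f)}{\var_{T_\ell}(f)}
\]
and exhibit a single test function $f$ on $\Omega(T_\ell)$ whose Rayleigh quotient is at most $\gap(\ell-1)$. Fix a child $w$ of the root $r$ of $T_\ell$; let $n_\ell := |T_\ell|$, so that the subtree $T_w$ contains $n_{\ell-1}$ vertices. Under $\pi_{T_\ell}$, the conditional distribution of $\sigma|_{T_w}$ given $\sigma(r)=c$ is precisely the Gibbs measure $\pi_{T_{\ell-1},c}$ on $T_{\ell-1}$ with external boundary $c$, and by permutation symmetry among the $k-1$ colors different from $c_1$ the spectral gap $\gap(\ell-1)$ is the same for every choice of boundary.

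Next, let $g$ be an eigenfunction achieving $\gap(\ell-1)$ on $T_{\ell-1}$ with boundary $c^*$, normalized so that $E_{T_{\ell-1},c^*}(g)=0$. For each $c \in \{c_2,\dots,c_k\}$, let $g_c$ denote the conjugate eigenfunction with boundary $c$ obtained via the color permutation sending $c^*$ to $c$, and define
\[
 f(\sigma) := g_{\sigma(r)}(\sigma|_{T_w}),
\]
which is well-defined since properness forces $\sigma(w) \neq \sigma(r)$. The chain rule for variance (Proposition \ref{prop:lsdec}) combined with color symmetry yields $\var_{T_\ell}(f) = \var(g)$: the cross term $\var(E[f\mid\sigma(r)])$ vanishes because every conditional mean is zero, and each conditional variance equals $\var(g)$. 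For the Dirichlet form, decompose $\D_{T_\ell}(f) = \frac{1}{n_\ell}\sum_x E[\var_{\{x\}}(f)]$ by update location: vertices in sibling subtrees $T_v$ with $v\neq w$ contribute zero since $f$ does not depend on those coordinates; vertices $x \in T_w$ preserve $\sigma(r)$, and summing their contributions (and using color symmetry, together with the fact that each $g_c$ achieves $\gap(\ell-1)$ on its own boundary) gives total contribution $\frac{n_{\ell-1}}{n_\ell}\gap(\ell-1)\var(g)$; and the root $r$ contributes the extra term $\frac{1}{n_\ell}E[\var_{\{r\}}(f)]$.

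The remaining task, and the main technical obstacle, is to bound the root-update contribution by $\frac{n_\ell-n_{\ell-1}}{n_\ell}\gap(\ell-1)\var(g)$, since combining the three pieces would then yield $\D_{T_\ell}(f) \leq \gap(\ell-1)\var_{T_\ell}(f)$ and hence $\gap(\ell) \leq \gap(\ell-1)$. If $g$ can be taken in the trivial representation of the color-symmetry group $S_{k-1}$, then all $g_c$ coincide, $f$ is independent of $\sigma(r)$, and $\var_{\{r\}}(f)\equiv 0$ identically. In the general case $g$ lies in a non-trivial irreducible $S_{k-1}$-representation, so that $\sum_{c\in\{c_2,\dots,c_k\}} g_c \equiv 0$; combined with the hypothesis $k\le b+2$, which bounds the size of the available color set $A$ at the root and thereby constrains the fluctuations of the partial averages $\frac{1}{|A|}\sum_{c\in A} g_c$, this cancellation should be enough to absorb the root-update term into the slack $\frac{n_\ell-n_{\ell-1}}{n_\ell}\gap(\ell-1)\var(g)$ and complete the argument.
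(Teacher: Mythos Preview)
Your approach is genuinely different from the paper's, and it has a real gap at exactly the point you flag as ``the main technical obstacle.''

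The paper does \emph{not} embed $T_{\ell-1}$ as a subtree rooted at a child of $r$. Instead it identifies $T_{\ell-1}$ with the set $A$ of \emph{non-leaf} vertices of $T_\ell$ and sets $f(\sigma)=g(\sigma|_A)$. The crucial advantage is that the external boundary for $A$ is the same fixed color $c_1$ as for $T_{\ell-1}$, so there is no ``boundary-change'' term at all: $f$ simply ignores the leaves, leaf updates contribute $0$ to $\D_\ell(f)$, and the only nontrivial comparison is between the available-color sets at vertices that are leaves of $T_{\ell-1}$ but internal in $T_\ell$. That comparison is a one-line counting estimate, and it is precisely where the hypothesis $k\le b+2$ enters (via $|T_\ell|\ge (k-1)|T_{\ell-1}|$).

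In your construction, by contrast, updating the root changes the boundary seen by $T_w$, and the term $\tfrac{1}{n_\ell}E[\var_{\{r\}}(f)]$ has no reason to be $O(\gap(\ell-1)\var(g))$. The bound you need,
\[
E[\var_{\{r\}}(f)] \ \le\ (n_\ell-n_{\ell-1})\,\gap(\ell-1)\,\var(g),
\]
is not established, and the sketch you give does not close it. First, the family $\{g_c\}$ is not well-defined: ``the color permutation sending $c^*$ to $c$'' is a coset, not a single permutation, and different representatives produce different $g_c$; the assertion $\sum_c g_c\equiv 0$ depends on that choice and on which copy of $S_{k-1}$ you mean. Second, even granting some version of $\sum_{c\neq c_1} g_c\equiv 0$, the set $A$ of available root colors is a \emph{random} subset of $\{c_2,\dots,c_k\}$ determined by all $b$ children, and there is no cancellation forcing the partial averages $\tfrac{1}{|A|}\sum_{c\in A}g_c(\sigma|_{T_w})$ to be small pointwise. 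In the regime relevant to this paper $\gap(\ell-1)$ is polynomially small in $n_{\ell-1}$, so the right-hand side above is tiny, while $\var_{\{r\}}(f)$ is a priori on the order of $\var(g)$; the hypothesis $k\le b+2$ only controls $|A|$ from above and gives you nothing here. I do not see how to repair this without essentially abandoning the subtree embedding. The paper's choice of embedding (delete the leaves rather than pass to a subtree) is exactly what sidesteps this difficulty.
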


\begin{pf}
Let $\D_{\ell}(f)$ and $\var_{\ell}(f)$ be the Dirichlet form and the
variance of function $f\dvtx\Omega(T_{\ell})\rightarrow\R$ for the Glauber
dynamics on the complete tree of height~$\ell$ with the root attached
to an external vertex with a fixed color.
Let $P_{\ell}$ denote the probability transition of the dynamics,
and let $\pi_\ell$ denote its unique stationary distribution.

Let $g$ be the eigenfunction such that $\gap(\ell-1) = \D_{\ell
-1}(g)/\var_{\ell-1}(g)$. Now we are going to construct a function
$f\dvtx\Omega(T_{\ell})\rightarrow\R$, such that $\D_{\ell}(f)
\le\D
_{\ell
-1}(g)$ and $\var_{\ell}(f) = \var_{\ell-1}(g)$. Then, since
\[
\gap(\ell) \le\frac{\D_{\ell}(f)}{\var_{\ell}(f)} \le\frac{\D
_{\ell
-1}(g)}{b\cdot\var_{\ell-1}(g)} = \gap(\ell-1),
\]
we prove the lemma.

Let $A\subseteq T_{\ell}$ be the set of nonleaf vertices of $T_{\ell
}$, that is, $A = T_{\ell} \setminus L(T_\ell)$, where $L(T_\ell)$ is
the set of leaves in the tree $T_{\ell}$. There is a natural
correspondence between vertices in $A$ and in $T_{\ell-1}$. The
function $f$ is then defined in the following way: for $\sigma\in
\Omega(T_{\ell})$ and $\sigma' \in\Omega(T_{\ell-1})$, $f(\sigma
) =
g(\sigma')$ if the configuration $\sigma$ agrees with $\sigma'$ on the
subset $A$.

It is straightforward to show that $\var_\ell(f) = \var_{\ell-1}(g)$.
We will show $\D_{\ell}(f) \le\D_{\ell-1}(g)/b$. By definition,
\[
\D_\ell(f) = \sum_{\sigma,\eta\in\Omega(T_\ell)}\pi_\ell
(\sigma)P_\ell
(\sigma,\eta)\bigl(f(\sigma)-f(\eta)\bigr)^2.
\]

For a subset of vertices $S\subset T_\ell$, let
\[
\Omega(S) = \{\sigma'\in[k]^S\dvtx \mbox{ there exists } \sigma
\in
\Omega
(T_\ell) \mbox{ where } \sigma(A)=\sigma'\}.
\]
Let $\sigma', \eta' \in\Omega(A)$ be colorings of the internal vertices.
Let $\phi, \psi\in\Omega(L(T_\ell))$ be colorings of the leaves.
Finally, let $\circ$ be the concatenation operator, thus
$\sigma' \circ\psi= \sigma\in\Omega(T_{\ell})$ where
$\sigma(A) = \sigma'$ and $\sigma({L(T_\ell)}) = \phi$.
Then we can rewrite the Dirichlet form as
\begin{eqnarray*}
\D_\ell(f) &=&
\sum_{\sigma',\eta'\in\Omega(A)}
\sum_{\phi,\psi\in\Omega(L(T_{\ell}))}
\pi_\ell(\sigma'\circ\phi)P_\ell(\sigma' \circ\phi,\eta'
\circ\psi
)\\
&&\phantom{\sum_{\sigma',\eta'\in\Omega(A)}
\sum_{\phi,\psi\in\Omega(L(T_{\ell}))}}{}\times\bigl(f(\sigma'\circ\phi)-f(\eta' \circ\psi)\bigr)^2.
\end{eqnarray*}

According to the definition of the Glauber dynamics, for
configurations $\sigma, \eta\in\Omega(T_\ell)$ which differ at more
than one vertex, we have $P_\ell(\sigma,\eta) = 0$. Let $\oplus$ denote
the symmetric difference.
Now we can rewrite the Dirichlet form as
\begin{eqnarray*}
\D_\ell(f)
&=&
\sum_{v\in A}
\mathop{\sum_{\sigma',\eta'\in\Omega(A)\dvtx}}_{\sigma' \oplus\eta
'=\{v\}}
\sum_{\phi\in\Omega(L(T_\ell))}
\bigl[ \bigl(f(\sigma'\circ\phi)-f(\eta'\circ\phi)\bigr)^2
\pi_\ell(\sigma'\circ\phi)\\
&&\hspace*{150pt}\qquad{}\times  P_\ell(\sigma' \circ\phi,\eta'
\circ\phi
)\bigr]
\\
&&{}+
\sum_{v\in L(T_\ell)} \sum_{\sigma'\in\Omega(A)} \mathop{\sum_{\phi
,\psi\in
\Omega(L(T_\ell))\dvtx}}_{\phi\oplus\psi= \{v\}}
\bigl[\bigl(f(\sigma'\circ\phi)-f(\sigma'\circ\psi)\bigr)^2 \pi_\ell
(\sigma'\circ
\phi)\\
&&\hspace*{183pt}\qquad{}\times P_\ell(\sigma' \circ\phi,\sigma' \circ\psi)\bigr]
\\
&=& \sum_{v\in A} \mathop{\sum_{\sigma',\eta'\in\Omega(A)\dvtx}}_{
\sigma'
\oplus
\eta'=\{v\}}
\biggl[ \bigl(g(\sigma')-g(\eta')\bigr)^2
\sum_{\phi\in\Omega(L(T_\ell))}
\pi_\ell(\sigma'\circ\phi) \\
&&\hspace*{166pt}\qquad{}\times P_\ell(\sigma' \circ\phi,\eta'
\circ\phi
)\biggr],
\end{eqnarray*}
since $g(\sigma'\circ\phi)=g(\sigma'\circ\psi)=f(\sigma')$.

Thus we only need to consider the case when the sole disagreement is at
an internal
vertex. We can further decompose based on whether the disagreement is an
internal vertex of the tree $T_{\ell-1}$, which we denote as $I$,
or a leaf of $T_{\ell-1}$.

For $v\in L(T_{\ell-1})$, the goal is to bound the sum $\sum_{\phi}
\pi
_{\ell}(\sigma'\circ\phi) P_\ell(\sigma' \circ\phi,\eta' \circ
\phi)$
by $\pi_{\ell-1}(\sigma')/(|T_{\ell-1}|(k-1)b)$, that is, $\pi
_{\ell
-1}(\sigma') P_{\ell-1}(\sigma',\eta')/b$. We have the following
observation: Fix the vertex $v$, for each color $c$ such that $\sigma
'\oplus\eta' = \{v\}$ and $\eta'(v) = c$, the quantity $Q(c) := \sum
_{\phi} \pi_{\ell}(\sigma'\circ\phi) P_\ell(\sigma' \circ\phi
,\eta'
\circ\phi)$ are the same, that is, $Q(c) = Q(c')$ for any two colors
$c \neq c'$ because of the symmetry. Therefore, in order to bound
$Q(c)$, it is easier to bound $\sum_{c\neq\sigma'(v)} Q(c)$ by $\pi
_{\ell-1}(\sigma')/(|T_{\ell-1}|b)$. Then, by taking the average over
$k-1$ colors, we are done. It is a straightforward calculation to upper
bound the sum of $Q(c)$:
%
%
\begin{eqnarray}\label{ineq:leaf-disagree}
\sum_{c\neq\sigma'(v)} Q(c)
&=& \pi_{\ell-1}(\sigma')
\sum_{\phi} \frac{\pi_{\ell}(\sigma'\circ\phi)}{\pi_{\ell
-1}(\sigma')}
\sum_{c\neq\sigma'(v)}
\mathbf{1}\{c \in A_{\sigma'\circ\phi}(v)\} P_\ell(\sigma' \circ
\phi
,\eta' \circ\phi)\nonumber\\
&=& \pi_{\ell-1}(\sigma')
\sum_{\phi} \frac{\pi_{\ell}(\sigma'\circ\phi)}{\pi_{\ell
-1}(\sigma')}
\frac{|A_{\sigma'\circ\phi}(v)|-1}{|T_{\ell}||A_{\sigma'\circ
\phi
}(v)|}\\
&\le&
\pi_{\ell-1}(\sigma')\frac{1}{|T_{\ell-1}|b},\nonumber
\end{eqnarray}
where by definition, $A_{\sigma'\circ\phi}(v)$ is the set of available
colors for vertex $v$ in the configuration $\sigma' \circ\phi$.

Recall that $I$ denotes the internal vertices of $T_{\ell-1}$,
that is, $I=V(T_{\ell-1})\setminus L(T_{\ell-1})$.
Similarly, for $v\in I$ we have
\begin{eqnarray}\label{ineq:internal-disagree}
&&\sum_{\sigma',\eta'\in\Omega(A)\dvtx
\sigma' \oplus\eta'=\{v\}
} \biggl[\bigl(g(\sigma')-g(\eta')\bigr)^2
\nonumber\\
&&\hspace*{68pt}\qquad{}\times \sum_{\phi\in\Omega(L(T_{\ell}))} \pi_\ell(\sigma'\circ\phi)
P_\ell
(\sigma' \circ\phi,\eta' \circ\phi)\biggr]
^{\prime}
\\
&&\qquad= \mathop{\sum_{\sigma',\eta' \in\Omega(T_{\ell-1})\dvtx}}_{
\sigma' \oplus\eta'=\{v\}
} \bigl(g(\sigma')-g(\eta')\bigr)^2 \pi_{\ell-1}(\sigma') P_{\ell-1}(\sigma
',\eta')/b.\nonumber
\end{eqnarray}

Combining \eqref{ineq:leaf-disagree} and \eqref{ineq:internal-disagree},
and summing over $v\in T_{\ell-1}$, we have shown that
$\D_\ell(f) \le\D_{\ell-1}(g)/b$, which implies the lemma.
\end{pf}

\begin{pf*}{Proof of Theorem \protect\ref{thm:LS}}
Now we apply Lemma~\ref{lem:ls} inductively, and we get
\[
\ls^{-1} = \ls^{-1}(H) \le\alpha\bigl(1 + \gap^{-1}(1) + \cdots+
\gap
^{-1}(\lfloor\log_b n \rfloor)\bigr).
\]

Then by applying Lemma~\ref{lem:gap} on the spectral gaps, we can
conclude that
\[
\ls^{-1} \le b \alpha\gap^{-1}(H) \le\gap^{-1}\cdot2b\log{k}.
\]
\upqed\end{pf*}

\section{Conclusions}

Recently, Restrepo et al.~\cite{RSVVY} studied the analogous problem
for the
hard-core model which is defined on independent sets weighted by an
activity $\lambda>0$. In contrast to the picture we have shown for
colorings, Martinelli et al.~\cite{MSW:soda} has shown that on the
complete tree with branching factor $b$, the Glauber dynamics has
$O(n\log{n})$ mixing time for all $\lambda$. Thus, there is no slow-down
at the reconstruction threshold. However, Restrepo et al.~\cite{RSVVY}
show that
there is a boundary condition for the complete tree so that the
Glauber dynamics has a slow-down that appears to coincide with the
reconstruction threshold.


%


\printaddresses

\end{document}